\documentclass[10pt,twoside, a4paper, english, reqno]{amsart}
\usepackage{amssymb,amsfonts,latexsym,color,tikz,float,pgf,subfig,caption,bm}
\usepackage{graphics,verbatim}
\usepackage{graphicx}
\usepackage{a4wide}
\usepackage{hyperref}
\usepackage{enumerate}
\usepackage[utf8]{inputenc}
\addtolength{\parskip}{0.5em}
\pagestyle{plain}

\usepackage{color}
\usepackage[normalem]{ulem}

\newtheorem{theorem}{Theorem}[section]
\newtheorem{proposition}[theorem]{Proposition}
\newtheorem{lemma}[theorem]{Lemma}
\newtheorem{corollary}[theorem]{Corollary}
\theoremstyle{remark}
\newtheorem{remark}[theorem]{Remark}
\theoremstyle{definition}

\DeclareMathOperator{\divergence}{div}

\newcommand{\eps}{\varepsilon}
\renewcommand{\epsilon}{\varepsilon}


\newcommand{\abs}[1]{\left\vert#1\right\vert}

\newcommand{\norm}[1]{\left\Vert#1\right\Vert}

\newcommand{\be} {\begin{equation}}
\newcommand{\ee} {\end{equation}}
\newcommand{\bea} {\begin{eqnarray}}
\newcommand{\eea} {\end{eqnarray}}
\newcommand{\Bea} {\begin{eqnarray*}}
\newcommand{\Eea} {\end{eqnarray*}}
\newcommand{\pa} {\partial}

\newcommand{\al} {\alpha}

\newcommand{\de} {\delta}
\newcommand{\na}{\nabla}
\newcommand{\ga} {\gamma}

\newcommand{\De} {\Delta}
\newcommand{\la} {\lambda}

\newcommand{\nequiv} {\not\equiv}

\newcommand{\f}{\frac}
\newcommand{\R}{\mathbb R}
\newcommand{\N}{\mathbb N}
\newcommand{\Rn}{\mathbb R^N}

\renewcommand{\S}{\mathbb{S}}
\newcommand{\deb}{\rightharpoonup}
\catcode`\@=11

\newenvironment{bvp}{\left\{\begin{aligned}  }{\end{aligned}\right.}

\newcommand{\Ds}{\mathcal{D}^{s,2}(\R^N)} 
\newcommand{\Dext}{\mathcal{D}^{1,2}(\R^{N+1}_+;t^{1-2s})} 

\newcommand{\dx}{\,\mathrm{d}x} 
\newcommand{\dy}{\,\mathrm{d}y} 
\newcommand{\dS}{\,\mathrm{d}S} 
\newcommand{\ds}{\,\mathrm{d}S'} 
\newcommand{\dxdt}{\, \mathrm{d}t \,\mathrm{d}x} 
\newcommand{\dxdy}{\, \mathrm{d}x \,\mathrm{d}y} 

\DeclareMathOperator{\Tr}{\mathrm{Tr}} 
\DeclareMathOperator{\supp}{\mathrm{supp}} 

\numberwithin{equation}{section}

\title{On fractional multi-singular Schr\"odinger operators:
  positivity and localization of binding}
\author{Veronica Felli}
\address{Veronica Felli 
\newline \indent Dipartimento di Scienza dei Materiali, Università
degli Studi di Milano-Bicocca,
\newline \indent Via Cozzi 55, 20125 Milano, Italy.}
\email{veronica.felli@unimib.it}
\author{Debangana Mukherjee}
\address{Debangana Mukherjee
\newline \indent Department of Mathematics and Statistics, Masaryk University,
\newline \indent Kotlářská 267/2, 611 37 Brno, Czech Republic.}
\email{mukherjeed@math.muni.cz}
\author{Roberto Ognibene}
\address{Roberto Ognibene
\newline \indent Dipartimento di Matematica e Applicazioni, Università
degli Studi di Milano-Bicocca,
\newline \indent Via Cozzi 55, 20125 Milano, Italy.}
\email{roberto.ognibene@unimib.it}

\begin{document}

\maketitle

\begin{abstract}
  In this work we investigate positivity properties of nonlocal
  Schr\"odinger type operators, driven by the fractional
    Laplacian, with multipolar, critical, and  locally homogeneous potentials. On one
  hand, we develop a criterion that links the positivity of the
  spectrum of such operators with the existence of certain positive
  supersolutions, while, on the other hand, we study the localization
  of binding for this kind of potentials. Combining these two tools
  and performing an inductive procedure on the number of poles, we
  establish necessary and sufficient conditions for the existence of a
  configuration of poles that ensures the positivity of the
  corresponding Schr\"odinger operator.
\end{abstract}

\medskip\noindent {\bf Keywords.} Fractional Laplacian; Multipolar potentials; Positivity Criterion; Localization
  of binding.

\medskip \noindent 
{\bf MSC classification:} 
35J75, 
35R11,  
35J10, 
35P05. 

\section{Introduction}\label{sec:intr}
Let $s\in(0,1)$ and $N>2s$. Let us consider $k\geq 1$ real numbers $\lambda_1,\dots,\lambda_k$ (sometimes called \emph{masses}) and $k$ \emph{poles} $a_1,\dots,a_k\in \R^N$ such that $a_i\neq a_j$ for all $i,j=1,\dots,k,~i\neq j$. The main object of our investigation is the operator
\begin{equation}\label{eq:frac_oper}
\mathcal{L}_{\la_1,\dots ,\la_k,a_1,\dots ,a_k}:=(-\De)^s-\sum_{i=1}^{k}\frac{\la_i}{|x-a_i|^{2s}}\qquad\text{in }\R^N.
\end{equation}
Here $(-\Delta)^s$ denotes the fractional Laplace operator, which acts
on functions $\varphi\in C_c^\infty(\R^N)$ as 
 \begin{equation*}
   (-\Delta)^s\varphi (x):=
   C(N,s)\,\mathrm{P.V.}\,\int_{\R^N}
   \frac{\varphi(x)-\varphi(y)}{\abs{x-y}^{N+2s}}\dy
   =C(N,s)\lim_{\rho\to 0^+}
\int_{\abs{x-y}>\rho}\frac{\varphi(x)-\varphi(y)}{\abs{x-y}^{N+2s}}\dy,
\end{equation*}
 where $\mathrm{P.V.}$ means that the integral has to be seen in the principal value sense and
 \[
  C(N,s)=\pi^{-\frac{N}{2}}2^{2s}\frac{\Gamma\left(\frac{N+2s}{2}\right)}{\Gamma(2-s)}s(1-s),
 \]
 with $\Gamma$ denoting the usual Euler's Gamma function. Hereafter, we refer to an operator of the type $(-\Delta)^s-V$ as a
 \emph{fractional Schr\"odinger operator} with potential $V$.

 One of the reasons of  mathematical interest in operators of type \eqref{eq:frac_oper}
lies in  the criticality of potentials of order $-2s$, which have the
same scaling rate as the $s$-fractional Laplacian.

We introduce, on $C_c^\infty(\R^N)$, the following positive definite bilinear form, associated to $(-\Delta)^s$
\begin{equation}\label{eq:def_scalar}
 (u,v)_{\Ds}:=\frac{1}{2}C(N,s)\int_{\R^{2N}}\frac{(u(x)-u(y))(v(x)-v(y))}{\abs{x-y}^{N+2s}}\dxdy 
\end{equation}
and we define the space $\Ds$ as the completion of $C_c^\infty(\R^N)$
with respect to the norm $\norm{\cdot}_{\Ds}$ induced by the scalar
product \eqref{eq:def_scalar}. Moreover, 
the following quadratic form is naturally associated to the operator
$\mathcal{L}_{\lambda_1,\dots,\lambda_k,a_1,\dots,a_k}$
\begin{equation}\label{eq:quadr_form}
\begin{aligned}
   Q_{\la_1,\dots ,\la_k,a_1,\dots ,a_k}(u):&=\frac{1}{2}C(N,s)\int_{\R^{2N}}\frac{\abs{u(x)-u(y)}^2}{\abs{x-y}^{N+2s}}\dxdy-\sum_{i=1}^{k}\la_i \int_{\Rn}\frac{\abs{u(x)}^2}{|x-a_i|^{2s}}\dx \\
   &=\norm{u}_{\Ds}^2-\sum_{i=1}^{k}\la_i \int_{\Rn}\frac{\abs{u(x)}^2}{|x-a_i|^{2s}}\dx.
\end{aligned}
\end{equation}
We observe that 
$Q_{\la_1,\dots ,\la_k,a_1,\dots ,a_k}$ is well-defined 
 on
$\Ds$ thanks to the validity of the following fractional Hardy inequality
proved in \cite{Herbst1977}:
\begin{equation}\label{eq:hardy}
 \gamma_H \int_{\R^N}\frac{\abs{u(x)}^2}{\abs{x}^{2s}}\dx\leq \norm{u}_{\Ds}^2 \quad\text{for all }u\in\Ds,
\end{equation}
where the constant 
\begin{equation*}
\gamma_H  =\gamma_H(N,s):= 2^{2s}\frac{\Gamma^2\left(\frac{N+2s}{4}\right)}{\Gamma^2\left(\frac{N-2s}{4}\right)}
\end{equation*}
is optimal and not attained.

One goal of the present paper is to find necessary and sufficient
conditions (on the masses
$\la_1,\dots,\la_k$) for the existence of a configuration of poles
$(a_1,\dots,a_k)$ that guarantees the positivity of the quadratic form
\eqref{eq:quadr_form}, extending to the fractional case some results
obtained in \cite{Felli2007} for the classical Laplacian. The
quadratic form $Q_{\la_1,\dots ,\la_k,a_1,\dots
  ,a_k}$ is said to be \emph{positive definite} if
\begin{equation*}
\inf_{u\in\Ds\setminus \{0\}}\frac{ Q_{\la_1,\dots ,\la_k,a_1,\dots,a_k}(u)}{\norm{u}_{\Ds}^2}>0.
\end{equation*}
In the case of a single pole (i.e.
$k=1$), the fractional Hardy inequality \eqref{eq:hardy} immediately
answers the question of positivity: the quadratic form $Q_{\lambda,a}$
is positive definite if and only if
$\lambda<\ga_H$. Hence our interest in multipolar potentials is
justified by the fact that the location of the poles (in particular
the shape of the configuration) could play some role in the positivity
of \eqref{eq:quadr_form}. Furthermore, one could expect that some
other conditions on the masses may arise when
$k>1$. We mention that several  authors have approached the problem of multipolar
singular potentials, both for the classical Laplacian,  see e.g. \cite{berchio,bosi-esteban-dolbeault,zuazua,Almeida2017,faraci,Ferreira2013}
and for the fractional case, see  \cite{Ferreira2017}.

A fundamental tool in our arguments is the well known
Caffarelli-Silvestre extension for functions in $\Ds$, which allows us
to
study the nonlocal operator $(-\Delta)^s$ by means of a boundary value
problem driven by a local operator in
$\R^{N+1}_+:=\{(t,x):t\in(0,+\infty), x\in \R^N\}$.  We introduce
 the space $\Dext$, defined as the completion of
$C_c^{\infty}(\overline{\R^{N+1}_+})$ with respect to the norm
 \[
 \norm{U}_{\Dext}:=\left( \int_{\R^{N+1}_+}t^{1-2s}\abs{\nabla U}^2\dxdt \right)^{1/2}.
 \]
 We have that there exists a well-defined and continuous trace map 
 \begin{equation}\label{eq:3}
 \Tr\colon \Dext \to \Ds
\end{equation}
 which is onto,  see, for instance, \cite{Brandle2013}. Let us now consider, for $u\in\Ds$, the following minimization problem
 \begin{equation}\label{eq:min_ext}
 \min\left\{ \int_{\R^{N+1}_+}t^{1-2s}\abs{\nabla \Phi}^2\dxdt\colon \Phi\in\Dext,~ \Tr \Phi=u \right\}.
 \end{equation}
 One can prove that there exists a unique function $U\in\Dext$
 (which we call the \emph{extension} of
 $u$) attaining \eqref{eq:min_ext}, i.e.
 \begin{equation}\label{eq:min_ext_ineq}
 \int_{\R^{N+1}_+}t^{1-2s}\abs{\nabla U}^2\dxdt \\ \leq \int_{\R^{N+1}_+}t^{1-2s}\abs{\nabla \Phi}^2\dxdt
\end{equation}
for all $\Phi \in \Dext$ such that $\Tr\Phi=u$.
 Furthermore, in \cite{Caffarelli2007} it has been proven that 
 \begin{equation}\label{eq:caff_silv}
 \int_{\R^{N+1}_+}t^{1-2s}\nabla U\cdot\nabla \Phi \dxdt=\kappa_s(u,\Tr\Phi)_{\Ds}\quad\text{for all }\Phi\in\Dext,
 \end{equation}
 where
 \begin{equation}\label{eq:kappa_s}
 \kappa_s:=\frac{\Gamma(1-s)}{2^{2s-1}\Gamma(s)}.
 \end{equation}
We observe that \eqref{eq:caff_silv} is the variational formulation of
the following problem
 \begin{equation}\label{eq:strong_ext}
 \left\{\begin{aligned}
 -\divergence(t^{1-2s}\nabla U)&=0, &&\text{in }\R^{N+1}_+, \\
 -\lim_{t\to 0}t^{1-2s}\frac{\partial U}{\partial t}&=\kappa_s(-\Delta)^s u, && \text{on }\R^N.
 \end{aligned}\right. 
 \end{equation}
 In the classical (local) case, the problem of  positivity of Schr\"odinger operators
 with multi-singular Hardy-type potentials was addressed in \cite{Felli2007}. In that
 article, the authors tackled the problem making use of a
 \emph{localization of binding} result that
 provides, under certain assumptions, the positivity of the sum of two
 positive operators, by translating one of them through a
 sufficiently long vector. This argument is based, in turn, on a criterion
 which relates the positivity of an operator to the existence of a
 positive supersolution, in the spirit of Allegretto-Piepenbrink
 Theory (see \cite{Allegretto1974,Piepenbrink1974}). As one can
 observe in \cite{Felli2007}, the strong suit of the local case is
 that the study of the action of the operator can be substantially
 reduced to neighbourhoods of the
 singularities. However, this is not possible in the fractional
 context due to nonlocal effects:
 in the present paper  we overcome this issue by taking into consideration
 the Caffarelli-Silvestre extension \eqref{eq:strong_ext}, which yields a local formulation of the
 problem. 

The equivalence between the fractional problem in $\R^N$ and the
Caffarelli-Silvestre extension problem in $\R^{N+1}_+$ allows us to
characterize the coercivity properties of quadratic forms on $\Ds$ 
in terms of quadratic forms on $\Dext$. 
We say that a function $V\in L^1_{\rm loc}(\R^N)$ satisfies the
\emph{form-bounded condition} if 
\begin{equation}
  \tag{$FB$}\label{eq:14}
  \sup_{\substack{u\in\Ds\\u\not\equiv0}}\frac{\int_{\R^N}|V(x)|u^2(x)\dx}{\|u\|_{\Ds}^2}<+\infty.
\end{equation}
Let $\mathcal H$ be the class of potentials satisfying the
form-bounded condition, i.e.
\[
\mathcal H=\{V\in L^1_{\rm loc}(\R^N):V\text{ satisfies
}\eqref{eq:14}\}.
\]
It is easy to understand that, if $V\in\mathcal H$, then $Vu\in
(\Ds)^\star$ for all $u\in\Ds$ and the quadratic form $u\mapsto
\|u\|_{\Ds}^2-\int_{\R^N}Vu^2$ is well defined in $\Ds$. For all
$V\in\mathcal H$ we define 
\begin{equation}\label{eq:def_inf}
\mu(V)= 
 \inf_{u \in \Ds\setminus\{0\}}\frac{ \norm{u}_{\Ds}^2-\int_{\R^N}V u^2\dx}{\displaystyle\norm{u}_{\Ds}^2 }  
\end{equation}
and observe that $\mu(V)>-\infty$.
\begin{lemma}\label{lemma:equiv_inf}
 Let $V\in\mathcal H$. 
 Then
 \begin{equation}\label{eq:def_mu}
 \mu(V)=\inf_{\substack{U \in \Dext \\ U\nequiv 0}}\frac{\displaystyle\int_{\R^{N+1}_+}t^{1-2s}|\na U|^2\dxdt-\kappa_s\int_{\Rn}V |\Tr U|^2\dx}{\displaystyle\int_{\R^{N+1}_+}t^{1-2s}|\na U|^2\dxdt}.
   \end{equation}
 \end{lemma}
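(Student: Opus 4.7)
My plan is to prove the two inequalities ``RHS $\leq\mu(V)$'' and ``$\mu(V)\leq$ RHS'' separately, relying on the identity \eqref{eq:caff_silv} and the minimization property \eqref{eq:min_ext_ineq} of the Caffarelli--Silvestre extension.

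For ``RHS $\leq\mu(V)$'', I take an arbitrary $u\in\Ds\setminus\{0\}$ and consider its extension $U\in\Dext$. Testing \eqref{eq:caff_silv} with $\Phi=U$ yields
$$
\int_{\R^{N+1}_+}t^{1-2s}|\nabla U|^2\dxdt=\kappa_s\norm{u}_{\Ds}^2,
$$
while $\Tr U=u$ gives $\int_{\R^N}V|\Tr U|^2\dx=\int_{\R^N}Vu^2\dx$. Hence the quotient on the RHS of \eqref{eq:def_mu} evaluated at $U$ coincides with $(\norm{u}_{\Ds}^2-\int Vu^2)/\norm{u}_{\Ds}^2$; passing to the infimum over $u$ produces the stated bound.

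For the reverse inequality, I fix any $\Phi\in\Dext$ with $\Phi\nequiv 0$ and set $u:=\Tr\Phi\in\Ds$. The minimization property \eqref{eq:min_ext_ineq} gives $\int_{\R^{N+1}_+}t^{1-2s}|\nabla\Phi|^2\dxdt\geq\kappa_s\norm{u}_{\Ds}^2$. If $u\nequiv 0$, the definition of $\mu(V)$ yields $\kappa_s\int Vu^2\dx\leq\kappa_s(1-\mu(V))\norm{u}_{\Ds}^2$; combining with the minimization bound (and using $1-\mu(V)\geq 0$) one derives
$$
\kappa_s\int_{\R^N}Vu^2\dx\leq(1-\mu(V))\int_{\R^{N+1}_+}t^{1-2s}|\nabla\Phi|^2\dxdt,
$$
which rearranges exactly to the ratio being $\geq\mu(V)$. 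If $u\equiv 0$, the ratio reduces to $1$, still $\geq\mu(V)$ provided $\mu(V)\leq 1$. Alternatively, one can make this more symmetric by decomposing $\Phi=Eu+W$ with $\Tr W=0$, using \eqref{eq:caff_silv} to show that $Eu\perp W$ in the weighted Dirichlet inner product, and then performing an elementary manipulation on $(a+b-c)/(a+b)$ with $a=\kappa_s\norm{u}_{\Ds}^2$, $b=\int t^{1-2s}|\nabla W|^2\dxdt$, $c=\kappa_s\int Vu^2\dx$.

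The main obstacle is the a priori bound $\mu(V)\leq 1$, which is used in both subcases above (for trace-zero contributions and for $u$ with $\int Vu^2\leq 0$). This bound is obtained by a concentration/translation argument: one selects $u_n\in C_c^\infty(\R^N)$ with supports receding to infinity, exploits the translation-invariance of $\norm{\cdot}_{\Ds}$ together with the form-boundedness of $V$ encoded by membership in $\mathcal H$ to force $\int Vu_n^2/\norm{u_n}_{\Ds}^2\to 0$, and concludes $\mu(V)\leq 1$. Once this is in place, the two inequalities combine to give \eqref{eq:def_mu}.
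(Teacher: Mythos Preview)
Your approach is essentially the same as the paper's: use the extension of a given $u$ to get one inequality, and the energy minimization property \eqref{eq:dir_princ} of the extension to get the other. You are actually more careful than the paper, which writes the pointwise inequality
\[
\frac{\int_{\R^{N+1}_+}t^{1-2s}|\nabla U|^2\dxdt-\kappa_s\int_{\R^N}V|\Tr U|^2\dx}{\int_{\R^{N+1}_+}t^{1-2s}|\nabla U|^2\dxdt}\geq 1-\frac{\int_{\R^N}Vu^2\dx}{\|u\|_{\Ds}^2}
\]
without noting that it can fail when $\int Vu^2<0$, and without treating the case $\Tr U=0$. You correctly isolate $\mu(V)\leq 1$ as the missing ingredient that makes both of these cases harmless.

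There is, however, a gap in your sketch of $\mu(V)\leq 1$. Translating supports to infinity and invoking only the form-bound \eqref{eq:14} does \emph{not} force $\int Vu_n^2/\|u_n\|_{\Ds}^2\to 0$: form-boundedness gives an upper bound on this ratio, not decay. For instance, a Hardy-type potential singular along a line, $V(x_1,x')=-|x'|^{-2s}$ (which is in $\mathcal H$ when $N-1>2s$), is invariant under translation in $x_1$, so $\int V\,\varphi(\cdot-y)^2$ does not tend to $0$ along that direction. The clean fix is to concentrate rather than translate: since $V\in L^1_{\rm loc}(\R^N)$, almost every $P$ is a Lebesgue point of $V$, and for $u_\rho(x)=\rho^{-(N-2s)/2}\varphi((x-P)/\rho)$ with $\varphi\in C_c^\infty(\R^N)$ one has $\|u_\rho\|_{\Ds}=\|\varphi\|_{\Ds}$ while
\[
\int_{\R^N}Vu_\rho^2\dx=\rho^{2s}\int_{\R^N}V(P+\rho z)\varphi(z)^2\,\mathrm{d}z=\rho^{2s}\big(V(P)+o(1)\big)\to 0.
\]
This gives $\mu(V)\leq 1$ directly and is essentially the argument used in the paper's Lemma~\ref{Lem-1}(i) for the subclass $\Theta$.
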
 
In the present paper we will focus our attention on 
the following class of potentials
\begin{multline*}
  \Theta:=\Bigg\{ V(x)=\sum_{i=1}^{k}\frac{\la_i
    \chi_{B'(a_i,r_i)}(x)}{|x-a_i|^{2s}}+\frac{\la_\infty \chi_{\Rn
      \setminus B'_R}(x)}{|x|^{2s}}+W(x)\colon r_i,R>0,~k\in\N, \\
  a_i\in\R^N,~a_i\neq a_j~\text{for }i\neq j,~
  \lambda_i,\lambda_\infty<\ga_H,~W\in L^{N/2s}(\R^N)\cap
  L^\infty(\R^N)\Bigg\},
\end{multline*}
where, for any $r>0$ and $x\in \R^N$, we denote
\[
 B'(x,r):=\{y\in\R^N\colon \abs{y-x}<r\}\quad\text{and}\quad B'_r:=B'(0,r).
\]
We observe that, when considering a potential  $V\in \Theta$, it is
not restrictive to assume that the
sets $B'(a_i,r_i)$ and $\Rn\setminus B'_R$ appearing in its
representation are mutually disjoint, up to redefining the remainder
$W$.

It is easy to see that, for instance,
\[
 \sum_{i=1}^k\frac{\lambda_i}{\abs{x-a_i}^{2s}}\in\Theta,\quad\text{when
 }
\lambda_i<\ga_H~\text{for all }i=1,\dots,k\text{ and }\sum_{i=1}^k\lambda_i<\ga_H.
\]
We observe that any $V \in \Theta$ satisfies the  form-bounded
condition, i.e. $\Theta\subset\mathcal H$, thanks to
the fractional Hardy and Sobolev inequalities stated in
\eqref{eq:hardy} and \eqref{eq:sobolev} respectively.

 Our first main result is a criterion that provides the equivalence
 between the positivity of $\mu(V)$ for potentials $V\in\Theta$
 and the existence of a positive supersolution to a certain (possibly
 perturbed) problem. This criterion is reminiscent
 of the Allegretto-Piepenbrink Theory, developed in 1974 in 
 \cite{Allegretto1974,Piepenbrink1974} (see also
 \cite{Agmon1983,Agmon1985,Moss1978,Pinchover2016}). As far as we
 know, the result contained in the following lemma is new in the
 nonlocal framework; nevertheless,
 some tools from the Allegretto-Piepenbrink Theory have been used in
 \cite{Frank2008a,Moroz2012} to prove some Hardy-type fractional inequalities.

\begin{lemma}[Positivity Criterion]\label{Criterion}
	Let $V=\sum_{i=1}^k \frac{\la_i
		\chi_{B'(a_i,r_i)}(x)}{|x-a_i|^{2s}}+\frac{\la_\infty \chi_{\R^{N}
			\setminus B'_R}(x)}{|x|^{2s}}+W(x) \in \Theta$ and let
	$\tilde{V}\in L^\infty_{\textup{loc}}(\R^N\setminus\{a_1,\dots,a_k\})$
	be such that $V\leq \tilde{V}\leq \abs{V}$ a.e. in $\R^N$. The following two assertions hold true.
	\begin{itemize}
		\item[(I)]  Assume that there exist some $\eps>0$ and
                  a function $\Phi
                  \in \Dext$ such that  $\Phi>0$ in $\overline{\R^{N+1}_+} \setminus
                \{(0,a_1), \dots, (0,a_k)\}$, $\Phi\in C^{0}\left(\overline{\R^{N+1}_+} \setminus
		\{(0,a_1),\dots, (0,a_k)\}\right)$,  and 
		\begin{equation}\label{eq:1}
		\int_{\R^{N+1}_+}t^{1-2s}\na \Phi\cdot \na U\dxdt \geq \kappa_s \int_{\R^{N}}
		(V+\eps \tilde{V})\Tr \Phi\Tr U\dx,
		\end{equation}
		for all $U \in \Dext,~ U\geq 0$ a.e. in $\R^{N+1}_+$. Then
		\begin{equation}\label{eq:criterion_inf}
			 \mu(V)\geq \epsilon/(\epsilon+1).
		\end{equation}
              \item[(II)] Conversely, assume that $\mu(V)>0$. Then
                there exist $\eps>0$ (not depending on $\tilde{V}$)
                and $\Phi~\!\!\in~\!\!\Dext$ such that $\Phi>0$ in
                $\R^{N+1}_+$,
                $\Phi\in C^{0}\left(\overline{\R^{N+1}_+} \setminus
                  \{(0,a_1),\dots, (0,a_k)\}\right)$,
                $\Phi\geq 0$ in
                $\overline{\R^{N+1}_+} \setminus \{(0,a_1), \dots,
                (0,a_k)\}$,
                and \eqref{eq:1} holds for every $U \in \Dext$
                satisfying $U\geq 0$ a.e. in $\R^{N+1}_+$.  If, in
                addition, we assume that $V$ and $\tilde{V}$ are
                locally H\"older continuous in
                $\R^N\setminus\{a_1,\dots,a_k\}$, then $\Phi>0$ in
                $\overline{\R^{N+1}_+} \setminus \{(0,a_1), \dots,
                (0,a_k)\}$.
	\end{itemize}
\end{lemma}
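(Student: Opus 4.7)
The strategy is to adapt the classical Allegretto--Piepenbrink substitution to the Caffarelli--Silvestre extension picture. Given $u\in\Ds\setminus\{0\}$, let $U_u\in\Dext$ be its Caffarelli--Silvestre extension, which by \eqref{eq:caff_silv} satisfies $\kappa_s\|u\|_{\Ds}^2=\int_{\R^{N+1}_+}t^{1-2s}|\nabla U_u|^2\dxdt$. The idea is to insert the nonnegative function $U:=U_u^2/\Phi$ as test function in \eqref{eq:1} and exploit the pointwise identity
\[
|\nabla U_u|^2-\nabla\Phi\cdot\nabla\!\left(\frac{U_u^2}{\Phi}\right)=\left|\nabla U_u-\frac{U_u}{\Phi}\nabla\Phi\right|^2\geq 0.
\]
Multiplying by $t^{1-2s}$, integrating over $\R^{N+1}_+$, and noticing that $\Tr(U_u^2/\Phi)=u^2/\Tr\Phi$ in the boundary integral supplied by \eqref{eq:1} yields $\|u\|_{\Ds}^2\geq\int_{\R^N}(V+\eps\tilde V)u^2\dx$. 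Since $\tilde V\geq V$ a.e., this improves to $\|u\|_{\Ds}^2\geq(1+\eps)\int_{\R^N}Vu^2\dx$, equivalently $\|u\|_{\Ds}^2-\int Vu^2\dx\geq\frac{\eps}{1+\eps}\|u\|_{\Ds}^2$, and passing to the infimum gives \eqref{eq:criterion_inf}. The main obstacle here is the admissibility of $U_u^2/\Phi$ as an element of $\Dext$, since $\Phi$ might vanish or blow up at the poles: I would first reduce, by density of $C_c^\infty(\R^N\setminus\{a_1,\dots,a_k\})$ in $\Ds$ (using that points have zero $s$-capacity for $N>2s$), to the case of $u$ smooth and compactly supported away from the poles, and then regularize via the truncated denominator $\Phi_\de:=\max(\Phi,\de)$ or $\Phi+\de$, verify the computation for $U_u^2/\Phi_\de$, and pass to the limit $\de\to 0^+$ through Fatou/monotone convergence.

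\textbf{Plan for Part (II).} I would produce the desired supersolution by solving a coercive linear problem. Since $|\tilde V|\leq|V|$ and $V\in\mathcal H$, $\tilde V$ lies in $\mathcal H$ with form bound controlled uniformly in $\tilde V$; hence there exists $\eps>0$ depending only on $V$ such that $\mu(V+\eps\tilde V)>0$. Fix such an $\eps$ and a nonzero $f\in C_c^\infty(\R^N\setminus\{a_1,\dots,a_k\})$ with $f\geq 0$. By Lax--Milgram applied to the coercive bilinear form $(u,v)\mapsto(u,v)_{\Ds}-\int(V+\eps\tilde V)uv\dx$ on $\Ds$, there exists a unique $u\in\Ds$ solving
\[
(u,v)_{\Ds}-\int_{\R^N}(V+\eps\tilde V)uv\dx=\int_{\R^N}fv\dx\qquad\text{for all }v\in\Ds.
\]
Testing with $v=-u^-$, using the standard inequality $(u,-u^-)_{\Ds}\geq\|u^-\|_{\Ds}^2$ for the fractional quadratic form together with $u(-u^-)=(u^-)^2$ and $\int fu^-\dx\geq 0$, I obtain $\|u^-\|_{\Ds}^2-\int(V+\eps\tilde V)(u^-)^2\dx\leq 0$, which by coercivity forces $u^-\equiv 0$, hence $u\geq 0$. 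Define $\Phi\in\Dext$ to be the Caffarelli--Silvestre extension of $u$, so that $\Phi\geq 0$ almost everywhere. For every $U\in\Dext$ with $U\geq 0$, the identity \eqref{eq:caff_silv} combined with $f\geq 0$ yields
\[
\int_{\R^{N+1}_+}t^{1-2s}\nabla\Phi\cdot\nabla U\dxdt=\kappa_s\!\int_{\R^N}(V+\eps\tilde V)u\Tr U\dx+\kappa_s\!\int_{\R^N}f\Tr U\dx\geq\kappa_s\!\int_{\R^N}(V+\eps\tilde V)\Tr\Phi\,\Tr U\dx,
\]
proving \eqref{eq:1}.

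\textbf{Main obstacle.} The delicate part is upgrading $\Phi\geq 0$ to the claimed continuity and pointwise positivity on $\overline{\R^{N+1}_+}\setminus\{(0,a_1),\dots,(0,a_k)\}$. By construction $\Phi$ solves $-\divergence(t^{1-2s}\nabla\Phi)=0$ in $\R^{N+1}_+$ with Neumann datum $-\lim_{t\to 0^+}t^{1-2s}\partial_t\Phi=\kappa_s[(V+\eps\tilde V)u+f]$ on $\R^N$, which belongs to $L^\infty_{\rm loc}$ on compact subsets of $\R^N\setminus\{a_1,\dots,a_k\}$ once one combines $V,\tilde V\in L^\infty_{\rm loc}$ there with local boundedness of $u$ (via standard fractional regularity). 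Interior and boundary regularity theory for degenerate elliptic equations with the Muckenhoupt $A_2$ weight $t^{1-2s}$, applied on balls disjoint from the poles, then gives $\Phi\in C^{0,\be}_{\rm loc}(\overline{\R^{N+1}_+}\setminus\{(0,a_1),\dots,(0,a_k)\})$; the strong maximum principle for the same degenerate operator yields $\Phi>0$ in $\R^{N+1}_+$; and under the additional local Hölder continuity assumption on $V,\tilde V$, the Hopf boundary lemma extends strict positivity up to the boundary minus the poles. Carefully invoking these degenerate-elliptic regularity and maximum-principle tools uniformly on compact sets avoiding the singularities is the main technical effort of Part (II).
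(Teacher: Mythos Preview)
Your proposal is correct and follows essentially the same strategy as the paper, with two minor differences worth noting.

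For Part (I), the paper avoids your regularization step $\Phi_\de$ entirely by working directly in the extended space: it takes $U\in C_c^\infty(\overline{\R^{N+1}_+}\setminus\{(0,a_1),\dots,(0,a_k)\})$ rather than the extension $U_u$ of some $u\in C_c^\infty(\R^N\setminus\{a_i\})$. Since such a $U$ has compact support contained in the region where $\Phi$ is continuous and strictly positive, $U^2/\Phi$ is automatically smooth with compact support and hence lies in $\Dext$ without any truncation of the denominator; one then concludes via the density of $C_c^\infty(\overline{\R^{N+1}_+}\setminus\{\text{poles}\})$ in $\Dext$ and the extended characterization of $\mu(V)$. Your route through $\Ds$, extensions, and $\Phi+\de$ also works but carries the extra burden of justifying that $U_u^2/\Phi_\de\in\Dext$ (the extension $U_u$ is not compactly supported) and of passing to the limit $\de\to 0^+$.

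For Part (II), the paper constructs $\Phi$ differently: instead of solving a linear problem with a source $f$ via Lax--Milgram, it fixes a positive weight $p\in L^{N/2s}\cap L^\infty$ (H\"older continuous) and obtains $\Phi$ as a nonnegative minimizer of the weighted Rayleigh quotient
\[
m_p=\inf_{\Tr U\not\equiv 0}\frac{\int_{\R^{N+1}_+}t^{1-2s}|\nabla U|^2\dxdt-\kappa_s\int_{\R^N}(V+\eps\tilde V)|\Tr U|^2\dx}{\int_{\R^N}p\,|\Tr U|^2\dx},
\]
using the compact trace embedding into $L^2(\R^N;p\,dx)$; coercivity gives $m_p\geq 0$, so the Euler--Lagrange equation is already a supersolution. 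Your Lax--Milgram construction with $f\geq 0$ and the test $v=-u^-$ achieves the same end and is equally valid; the subsequent regularity and positivity arguments (H\"older regularity of $\Phi$ up to the boundary away from the poles, strong maximum principle in $\R^{N+1}_+$, Hopf lemma under the additional H\"older assumption on $V,\tilde V$) are identical to the paper's.
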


In order to use statement (I) to obtain positivity of a given
Schr\"odinger operator with potential in $\Theta$, it is crucial to
exhibit a weak supersolution to the corresponding Schr\"odinger  equation,
i.e. a function satisfying \eqref{eq:1}, which is \emph{strictly
  positive} outside the poles. Nevertheless, the application of
maximum  principles to prove positivity of solutions to
singular/degenerate extension
problems is more delicate than in the classic case, due to regularity issues (see the Hopf type
 principle proved in \cite[Proposition 4.11]{Cabre2014} and recalled
 in Proposition \ref{prop:cabre_sire} of the Appendix).  For this
 reason, in order to apply the above criterion in Sections
 \ref{sec:perturbation} and \ref{sec:localization}, we will develop an
 approximation argument introducing a class of more regular potentials
 (see \eqref{eq:def_Theta_s}).

The following theorem, whose proof heavily relies on Lemma
\ref{Criterion}, fits in the theory of 
\emph{Localization of Binding}, whose aim is study the lowest eigenvalue of Schr\"odinger operators of the type
\[
 -\Delta + V_1+V_2(\cdot-y),\quad y\in \R^N,
\]
in relation to the potentials $V_1$ and $V_2$ and to the translation
vector $y\in\R^N$. The case in which $V_1$ and $V_2$ belong to the
Kato class has been studied in \cite{Pinchover1995}, while Simon in
\cite{Simon1980} analyzed the case of compactly supported potentials;
singular inverse square potentials were instead considered in \cite{Felli2007}.
 Our result concerns the fractional case and provides sufficient conditions on the potentials and on the length of the translation for the positivity of the corresponding fractional Schr\"odinger operator.

\begin{theorem}[Localization of Binding]\label{thm:separation}
Let
\begin{gather*}
 V_1(x)=\sum_{i=1}^{k_1}\frac{\la_i^1\chi_{B'(a_i^1,r_i^1)}(x)}{|x-a_i^1|^{2s}}+\frac{\la_\infty^1 \chi_{\R^N \setminus B'_{R_1}}(x)}{|x|^{2s}}+W_1(x) \in \Theta, \\
 V_2(x)=\sum_{i=1}^{k_2}\frac{\la_i^2\chi_{B'(a_i^2,r_i^2)}(x)}{|x-a_i^2|^{2s}}+\frac{\la_\infty^2 \chi_{\R^N \setminus B'_{R_2}}(x)}{|x|^{2s}}+W_2(x) \in \Theta,
\end{gather*}
and assume $\mu(V_1),\mu(V_2)>0$ and $\la_\infty^1+\la_\infty^2<\ga_H$. Then there exists $R>0$ such that, for every $y \in \Rn\setminus \overline{B'_R}$,
\[
 \mu(V_1(\cdot)+V_2(\cdot-y))>0.
\]
\end{theorem}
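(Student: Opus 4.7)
The plan is to apply Lemma \ref{Criterion} in both directions: use part (II) to extract positive supersolutions associated with $V_1$ and $V_2$ individually, glue them together into a positive supersolution for the combined potential $V := V_1(\cdot) + V_2(\cdot - y)$ when $|y|$ is large, and then invoke part (I) to deduce $\mu(V) > 0$. Since $\mu(V_1), \mu(V_2) > 0$, Lemma \ref{Criterion}(II) yields constants $\varepsilon_1, \varepsilon_2 > 0$, admissible potentials $\tilde V_1, \tilde V_2$ with $V_i \leq \tilde V_i \leq |V_i|$, and functions $\Phi_1, \Phi_2 \in \Dext$ that are positive away from their respective poles, continuous outside the poles, and satisfy the supersolution inequality \eqref{eq:1} for the triples $(V_i, \varepsilon_i, \tilde V_i)$. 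As anticipated in the paragraph following Lemma \ref{Criterion}, an approximation with a regularized class of potentials is needed to legitimize the Hopf-type arguments that produce strict positivity of $\Phi_1, \Phi_2$ throughout $\overline{\R^{N+1}_+}$ minus the pole sets.

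The natural candidate supersolution for the translated problem is
\[
\Phi(t,x) := \Phi_1(t,x) + \Phi_2(t,x - y),
\]
which belongs to $\Dext$, is positive outside the disjoint union of the poles of $V_1$ and the $y$-translates of the poles of $V_2$ (once $|y|$ is large enough that these clusters do not overlap), and is continuous there. Testing against an arbitrary $U \in \Dext$ with $U \geq 0$ and invoking the two individual supersolution inequalities produces
\[
\int_{\R^{N+1}_+} t^{1-2s}\nabla\Phi\cdot\nabla U \dxdt \geq \kappa_s \int_{\R^N} \big[(V_1 + \varepsilon_1 \tilde V_1)\Tr\Phi_1 + (V_2(\cdot - y) + \varepsilon_2 \tilde V_2(\cdot - y))\Tr\Phi_2(\cdot - y)\big]\Tr U \dx.
\]
To close the argument via Lemma \ref{Criterion}(I) it remains to show, for some $\varepsilon > 0$ and a suitable $\tilde V$ satisfying $V\leq \tilde V\leq |V|$, that the right-hand side dominates $\kappa_s \int_{\R^N} (V + \varepsilon \tilde V)\Tr\Phi\,\Tr U \dx$, i.e., that the "cross terms" $V_2(\cdot-y)\Tr\Phi_1$ and $V_1\Tr\Phi_2(\cdot-y)$ arising from the expansion of $V\Tr\Phi$ are absorbed by the slack $\varepsilon_i\tilde V_i\Tr\Phi_i$.

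The main obstacle is precisely this absorption. The natural strategy is to split $\R^N$ according to proximity to the two pole clusters: in a fixed neighbourhood of the poles of $V_1$ the translated potential $V_2(\cdot - y)$ is uniformly $O(|y|^{-2s})$, thanks to the $|\cdot|^{-2s}$-type decay of both its singular and tail parts, and symmetrically near the translated poles of $V_2$, so the corresponding cross term is uniformly small. On the common far field, where the two tails $\lambda_\infty^1/|x|^{2s}$ and $\lambda_\infty^2/|x-y|^{2s}$ are simultaneously active and essentially add up to $(\lambda_\infty^1+\lambda_\infty^2)/|x|^{2s}$, translation no longer produces smallness; instead one must genuinely exploit the strict gap $\gamma_H-\lambda_\infty^1-\lambda_\infty^2>0$ in the fractional Hardy inequality \eqref{eq:hardy}, combined with sharp decay/barrier estimates on $\Tr\Phi_1$ and $\Tr\Phi_2$ at infinity. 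Once this regional analysis is assembled, choosing $\varepsilon>0$ sufficiently small and $|y|$ sufficiently large yields an admissible $\tilde V$ for which $\Phi$ satisfies \eqref{eq:1} with respect to $V$ and the perturbation $\varepsilon\tilde V$, and Lemma \ref{Criterion}(I) then gives $\mu(V_1(\cdot)+V_2(\cdot-y)) \geq \varepsilon/(\varepsilon+1) > 0$.
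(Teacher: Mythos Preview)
Your overall architecture---extract supersolutions for $V_1,V_2$ separately, superpose, and feed back into the criterion---is indeed the paper's strategy. But the proposal has a genuine gap in the far-field step, and the sentence ``exploit the strict gap $\gamma_H-\lambda_\infty^1-\lambda_\infty^2>0$ combined with sharp decay/barrier estimates'' hides exactly the missing idea.

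The slack you obtain from Lemma~\ref{Criterion}(II) is $\varepsilon_j\tilde V_j\,\Tr\Phi_j$ with $\tilde V_j\le |V_j|$, so in the far field it is at most of size $\varepsilon_j|\lambda_\infty^j|\,|x|^{-2s}\Tr\Phi_j$. The cross terms you must absorb are $\lambda_\infty^1|x|^{-2s}\Tr\Phi_2(\cdot-y)$ and $\lambda_\infty^2|x-y|^{-2s}\Tr\Phi_1$. At a point like $x=y/2$, where $|x|\sim|x-y|\sim|y|/2$, both terms scale identically and the inequality you need collapses (up to constants) to $\varepsilon_1\lambda_\infty^1+\varepsilon_2\lambda_\infty^2\ge \lambda_\infty^1+\lambda_\infty^2$, which fails for small $\varepsilon_j$ whenever $\lambda_\infty^1+\lambda_\infty^2>0$. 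No amount of enlarging $|y|$ helps here, because translation does not create a scale separation in this intermediate region. The Hardy gap $\gamma_H-\lambda_\infty^1-\lambda_\infty^2>0$ is a global integral statement and does not by itself rescue this pointwise comparison.

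The paper resolves this by \emph{not} taking the supersolutions directly from Lemma~\ref{Criterion}(II). Instead (Lemma~\ref{separation-lemma}) it first applies the perturbation-at-infinity result, Theorem~\ref{Theorem-a}, to boost each $\lambda_\infty^j$ to a common value $\Lambda=\gamma_H-\varepsilon$, and only then builds supersolutions $\Phi_j$ for the boosted operators. This has two effects you are missing: first, the slack in the far field becomes $\nu_\infty^j|x|^{-2s}\Phi_j$ with $\nu_\infty^j=\Lambda-\lambda_\infty^j$ satisfying $\nu_\infty^1>\lambda_\infty^2$ and $\nu_\infty^2>\lambda_\infty^1$, which is precisely what is needed to dominate the cross terms; second, both $\Phi_j$ now decay at the \emph{same} rate $|x|^{-(N-2s+a_\Lambda)}$, which after the choice of weights $\Phi_y=\nu_\infty^2\Phi_1+\nu_\infty^1\Phi_2(\cdot-y)$ makes the far-field remainder factor as
\[
\Big(|x|^{-(N-2s+a_\Lambda)}-|x-y|^{-(N-2s+a_\Lambda)}\Big)\Big(|x|^{-2s}-|x-y|^{-2s}\Big)\ge 0.
\]
This is how the hypothesis $\lambda_\infty^1+\lambda_\infty^2<\gamma_H$ actually enters: it is what allows a common $\Lambda<\gamma_H$ strictly above both $\lambda_\infty^j$. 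Your near-pole analysis is essentially fine, but without the infinity-boosting step and the matched-decay factorization, the far-field absorption cannot be closed.
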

Combining the previous theorem with an inductive procedure on the
number of poles $k$, we obtain a necessary and sufficient condition
for positivity of the operator \eqref{eq:frac_oper}.

\begin{theorem}\label{theorem}
Let $(\la_1, \dots \la_k) \in \R^k.$ Then 
\begin{equation}\label{lambda-i}
\la_i<\ga_H \quad\text{for all }i=1,\dots, k, \quad\text{and}\quad \sum_{i=1}^{k}\la_i<\ga_H
\end{equation}
 is a necessary and sufficient condition for the existence of a configuration of poles $(a_1,\dots,a_k)$ such that the quadratic form $Q_{\la_1,\dots,\la_k,a_1,\dots,a_k}$ associated to the operator $\mathcal{L}_{\la_1,\dots,\la_k,a_1,\dots,a_k}$ is positive definite. 
\end{theorem}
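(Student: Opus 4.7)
The plan is to prove both directions: necessity by testing the quadratic form with suitably rescaled sequences of functions in $\Ds$, and sufficiency by induction on the number of poles $k$, with the inductive step supplied by Theorem \ref{thm:separation}.

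For necessity, I would assume the quadratic form is positive definite and show first that $\lambda_i<\gamma_H$ for each $i$. For fixed $i$, I would consider a concentrating sequence $u_n(x)=\eps_n^{-(N-2s)/2}\phi\big((x-a_i)/\eps_n\big)$ with $\eps_n\to 0^+$ and $\phi\in C_c^\infty(\R^N)\setminus\{0\}$. By scale invariance of $\|\cdot\|_{\Ds}$ and of the single Hardy integral, and because the other terms $\lambda_j|x-a_j|^{-2s}$ with $j\neq i$ remain bounded on the (shrinking) support of $u_n$, the quotient $Q_{\lambda_1,\dots,\lambda_k,a_1,\dots,a_k}(u_n)/\|u_n\|_{\Ds}^2$ converges to $1-\lambda_i\int_{\R^N}\phi^2/|y|^{2s}\,dy/\|\phi\|_{\Ds}^2$. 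Letting $\phi$ run along a Hardy-minimizing sequence and using a diagonal extraction yields $\mu(V)\le 1-\lambda_i/\gamma_H$, forcing $\lambda_i<\gamma_H$. For the global bound $\sum_i\lambda_i<\gamma_H$, I would instead use a dilating sequence $u_n(x)=n^{-(N-2s)/2}\phi(x/n)$: as $n\to+\infty$, each translate $|x-a_i|^{-2s}$ is replaced by $|y|^{-2s}$, so the poles coalesce at the origin in the limit and the effective mass is $\sum_i\lambda_i$. Again combining with a Hardy-minimizing choice of $\phi$ gives the required strict inequality.

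For sufficiency, I would proceed by induction on $k$. The base case $k=1$ is an immediate consequence of the fractional Hardy inequality \eqref{eq:hardy}. For the inductive step, given $(\lambda_1,\dots,\lambda_k)$ satisfying \eqref{lambda-i}, I would first observe that, after relabelling so that $\lambda_k=\max_i\lambda_i$, one still has $\sum_{i=1}^{k-1}\lambda_i<\gamma_H$: if $\lambda_k\ge 0$, removing it can only decrease the sum; if $\lambda_k<0$, then all $\lambda_i$ are negative and the partial sum is trivially less than $\gamma_H>0$. The inductive hypothesis then produces a configuration $(a_1,\dots,a_{k-1})$ such that $V_1:=\sum_{i=1}^{k-1}\lambda_i|x-a_i|^{-2s}$ satisfies $\mu(V_1)>0$, while $V_2:=\lambda_k|x|^{-2s}$ satisfies $\mu(V_2)>0$ by Hardy. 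I would then realize $V_1$ and $V_2$ as elements of $\Theta$ by choosing small radii $r_i$ and a large $R$, setting $\lambda_\infty^1=\sum_{i=1}^{k-1}\lambda_i$ and $\lambda_\infty^2=\lambda_k$, and checking that the remainders $W_1,W_2$ are bounded with decay $O(|x|^{-2s-1})$ at infinity (hence in $L^{N/2s}(\R^N)\cap L^\infty(\R^N)$). Since $\lambda_\infty^1+\lambda_\infty^2=\sum_{i=1}^{k}\lambda_i<\gamma_H$, Theorem \ref{thm:separation} yields $R>0$ such that $\mu(V_1+V_2(\cdot-y))>0$ for every $y\in\R^N\setminus\overline{B'_R}$. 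Taking $a_k=y$ then gives the sought configuration for the $k$-pole potential.

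The main obstacle, in my view, is the limiting argument for necessity: one has to justify the passage to the Hardy optimum along sequences that simultaneously concentrate (or spread), and verify that the cross contributions of the other poles indeed go to zero in the correct sense. A secondary technical point is checking that the two potentials produced at each inductive step actually lie in the class $\Theta$, which reduces to a routine expansion of $|x-a_i|^{-2s}$ at infinity. The conceptual crux, however, is the elementary observation that one can always remove a pole so that the residual sum stays below $\gamma_H$; this is what makes the induction go through without having to split the poles into arbitrary subsets.
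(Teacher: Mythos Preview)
Your proposal is correct and follows essentially the same strategy as the paper: necessity via concentrating/dilating test sequences (the paper packages the limit computations in Lemma \ref{lemma:conv_hardy}) and sufficiency by induction on $k$, peeling off the largest mass and invoking Theorem \ref{thm:separation}. The only cosmetic differences are that the paper starts the induction at $k=2$ (via Remark \ref{rmk:2_poles}) rather than $k=1$, and dispatches the case $\lambda_k\le 0$ as trivially positive definite rather than carrying it through the induction; your explicit verification that $V_1,V_2\in\Theta$ is a detail the paper leaves implicit.
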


Besides the interest in the existence of a configuration of poles
making $Q_{\lambda_1,\dots,\lambda_k,a_1,\dots,a_k}$ positive
definite, one can search for a condition on the masses
$\lambda_1,\dots,\lambda_k$ that guarantees the positivity of this
quadratic form for every configuration of poles; in this direction, an
answer is given by the following theorem (we refer to
\cite[Proposition 1.2]{FT} for an analogous
result in the classical case of the Laplacian with multipolar inverse square potentials).

\begin{theorem}\label{Prop-2}
Let $t^+:=\max\{0,t\}$. If 
\begin{equation}\label{eq:2}
 \sum_{i=1}^k\lambda_i^+<\ga_H,
\end{equation}
then the quadratic form $Q_{\lambda_1,\dots,\lambda_k,a_1,\dots,a_k}$
is positive definite for all $a_1,\dots,a_k\in \R^N$. Conversely,~if
\[
 \sum_{i=1}^k\lambda_i^+>\ga_H
\]
then there exists a configuration of poles $(a_1,\dots,a_k)$ such that $Q_{\lambda_1,\dots,\lambda_k,a_1,\dots,a_k}$ is not positive definite.
\end{theorem}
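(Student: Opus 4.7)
The plan is to prove the two implications of the theorem separately, in both cases reducing to the translation-invariance of the fractional Hardy inequality \eqref{eq:hardy}.

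For the sufficiency direction (from \eqref{eq:2} to positive definiteness for every configuration), I would first discard from the sum in $Q_{\lambda_1,\dots,\lambda_k,a_1,\dots,a_k}(u)$ all the indices with $\lambda_i\le 0$, since those terms contribute a nonnegative quantity to the quadratic form. For each remaining index, applying \eqref{eq:hardy} to the translated function $u(\cdot+a_i)$ (whose $\Ds$-norm coincides with that of $u$ by a change of variables) yields
\[
\gamma_H\int_{\R^N}\frac{u^2(x)}{|x-a_i|^{2s}}\dx\le\norm{u}_{\Ds}^2,
\]
so that summing over $i$ and using \eqref{eq:2} gives
\[
Q_{\lambda_1,\dots,\lambda_k,a_1,\dots,a_k}(u)\ge\norm{u}_{\Ds}^2-\sum_{i=1}^k\lambda_i^+\int_{\R^N}\frac{u^2(x)}{|x-a_i|^{2s}}\dx\ge\left(1-\frac{\sum_{i=1}^k\lambda_i^+}{\gamma_H}\right)\norm{u}_{\Ds}^2,
\]
which is a strictly positive multiple of $\norm{u}_{\Ds}^2$, uniformly with respect to $a_1,\dots,a_k$.

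For the converse, set $\Lambda:=\sum_{i=1}^k\lambda_i^+>\gamma_H$. Since $\gamma_H$ is the sharp (and not attained) constant in \eqref{eq:hardy} and $C_c^\infty(\R^N)$ is dense in $\Ds$, I would fix $u_0\in C_c^\infty(\R^N)\setminus\{0\}$ with $\norm{u_0}_{\Ds}^2<\Lambda\int_{\R^N}u_0^2(x)/|x|^{2s}\dx$. Split the indices into $I^+=\{i:\lambda_i>0\}$ and $I^-=\{i:\lambda_i<0\}$ (indices with $\lambda_i=0$ being irrelevant), choose pairwise distinct unit vectors $b_i$ ($i\in I^+$) and $c_j$ ($j\in I^-$), and set $a_i:=\eps b_i$ for $i\in I^+$ and $a_j:=Rc_j$ for $j\in I^-$. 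By the compact support of $u_0$ together with $2s<N$ and dominated convergence, $\int u_0^2/|x-a_i|^{2s}\dx\to\int u_0^2/|x|^{2s}\dx$ as $\eps\to 0$ for each $i\in I^+$, while $\int u_0^2/|x-a_j|^{2s}\dx\to 0$ as $R\to\infty$ for each $j\in I^-$. Choosing $\eps$ sufficiently small and $R$ sufficiently large (which also ensures that all poles are distinct) yields
\[
\sum_{i=1}^k\lambda_i\int_{\R^N}\frac{u_0^2(x)}{|x-a_i|^{2s}}\dx>\norm{u_0}_{\Ds}^2,
\]
so that $Q_{\lambda_1,\dots,\lambda_k,a_1,\dots,a_k}(u_0)<0$ and the quadratic form fails to be positive definite.

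The only delicate point is the selection of $u_0$, which truly uses the sharpness and non-attainment of $\gamma_H$ rather than merely \eqref{eq:hardy}: one picks a minimizing sequence for the Hardy ratio and then, by density together with the $\Ds$-continuity of $u\mapsto\int u^2/|x|^{2s}\dx$ (itself a consequence of \eqref{eq:hardy}), a compactly supported smooth replacement whose Hardy ratio is still below $\Lambda$. All remaining steps are standard dominated-convergence arguments, so I do not anticipate further obstacles.
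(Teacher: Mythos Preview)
Your proposal is correct and essentially matches the paper's proof. The sufficiency direction is identical; for the converse, the paper fixes arbitrary poles for the positive masses and dilates the test function by $\rho\to+\infty$ (so that the rescaled poles $a_i/\rho$ coalesce at the origin), whereas you fix the test function and send the positive-mass poles to the origin directly---these are equivalent via scale invariance, and both then push the negative-mass poles to infinity in the same way.
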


Finally, it is natural to ask whether $\mu(V)$, defined as an infimum
in \eqref{eq:def_mu}, is attained or not. In the case of a single
pole,  it is known that  the infimum is not achieved, see e.g. \cite{Frank2008a};
however, when dealing with multiple singularities, the outcome can be
different. Indeed, for $V$ in the class $\Theta$, 
we have that $\mu(V) \leq
1-\frac{1}{\ga_H}\max_{i=1,\dots,k,\infty}\la_i$, see Lemma
\ref{Lem-1}, and the infimum is attained in the case of strict
inequality, as 	established in  the following proposition.

\begin{proposition}\label{Prop-1}
	If $V \in\Theta$ is such that
	\begin{equation}\label{Eq}
	\mu(V)<1-\frac{1}{\ga_H}\max\{0,\la_1,\dots, \la_k,\la_\infty \},
	\end{equation}
	then $\mu(V)$ is attained. 
\end{proposition}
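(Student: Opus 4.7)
The plan is to use the extension characterization of $\mu(V)$ provided by Lemma \ref{lemma:equiv_inf} and apply a concentration-compactness scheme. I would take a minimizing sequence $\{U_n\}\subset\Dext$ normalized by $\int_{\R^{N+1}_+}t^{1-2s}|\nabla U_n|^2\dxdt=1$ with the numerator of the quotient in \eqref{eq:def_mu} converging to $\mu(V)$, and, up to subsequence, assume $U_n\rightharpoonup U$ weakly in $\Dext$. Writing $V_n:=U_n-U\rightharpoonup 0$ (with $\Tr V_n\to 0$ a.e.\ on $\R^N$), the goal is to prove that the residual mass $B:=\lim_n\int t^{1-2s}|\nabla V_n|^2\dxdt$ vanishes: this forces $\int t^{1-2s}|\nabla U|^2\dxdt=1$, hence $U\nequiv 0$ attains $\mu(V)$.

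The first technical step is to establish the Brezis--Lieb-type splittings
\begin{gather*}
\int t^{1-2s}|\nabla U_n|^2\dxdt = \int t^{1-2s}|\nabla U|^2\dxdt + \int t^{1-2s}|\nabla V_n|^2\dxdt + o(1),\\
\int_{\R^N} V|\Tr U_n|^2\dx = \int_{\R^N} V|\Tr U|^2\dx + \int_{\R^N} V|\Tr V_n|^2\dx + o(1).
\end{gather*}
The first is classical for weak convergence in $L^2(t^{1-2s})$. For the second, the bounded-integrable part satisfies $\int W|\Tr V_n|^2\to 0$ by Rellich compactness on balls combined with the H\"older tail bound coming from $W\in L^{N/2s}(\R^N)$ and the fractional Sobolev embedding $\Ds\hookrightarrow L^{2^*_s}(\R^N)$; each Hardy-type piece splits by applying the Brezis--Lieb lemma in the measure space $(\R^N,|x-a_i|^{-2s}\dx)$, where the required $L^2$-boundedness of $\{\Tr U_n\}$ in that measure is provided by the Hardy inequality \eqref{eq:hardy} combined with \eqref{eq:caff_silv}; the same argument yields the splitting of the $\lambda_\infty$ term.

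The crucial step is a sharp concentration estimate on sequences vanishing weakly: setting $\Lambda:=\max\{0,\lambda_1,\dots,\lambda_k,\lambda_\infty\}$, I claim
\[
\limsup_n\kappa_s\int_{\R^N} V|\Tr V_n|^2\dx \le \frac{\Lambda}{\ga_H}\limsup_n\int_{\R^{N+1}_+} t^{1-2s}|\nabla V_n|^2\dxdt.
\]
Up to increasing $R$ in the representation of $V$ (absorbing the correction into $W$), I choose radial cutoffs $\eta_1,\dots,\eta_k,\eta_\infty\in C^\infty(\overline{\R^{N+1}_+})$, $0\le\eta_j\le 1$, with pairwise disjoint supports, $\eta_i\equiv 1$ on a neighborhood of $(0,a_i)$ covering $\{0\}\times B'(a_i,r_i)$, $\eta_\infty\equiv 1$ on $\{|(t,x)|\ge R\}$, and each $\supp\nabla\eta_j$ compact in $\overline{\R^{N+1}_+}$. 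Testing the extended Hardy inequality with $\eta_iV_n\in\Dext$ yields
\[
\int_{B'(a_i,r_i)}\frac{|\Tr V_n|^2}{|x-a_i|^{2s}}\dx\le\frac{1}{\ga_H\kappa_s}\int_{\R^{N+1}_+}t^{1-2s}|\nabla(\eta_iV_n)|^2\dxdt,
\]
and expanding $|\nabla(\eta_iV_n)|^2$ together with weighted Rellich compactness ($t^{1-2s}$ is a Muckenhoupt $A_2$ weight) on the compact set $\supp\nabla\eta_i$ reduces the right-hand side to $(\ga_H\kappa_s)^{-1}\int\eta_i^2t^{1-2s}|\nabla V_n|^2\dxdt + o(1)$; the same holds at infinity with $\eta_\infty$. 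Summing over $i$ and $\infty$, using $\lambda_j\le\lambda_j^+$, the pointwise bound $\sum_i\lambda_i^+\eta_i^2+\lambda_\infty^+\eta_\infty^2\le\Lambda$ (from disjointness of supports and $0\le\eta_j\le 1$), and $\int W|\Tr V_n|^2\to 0$, proves the claim.

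Setting $A:=\int t^{1-2s}|\nabla U|^2\dxdt$ (so $A+B=1$), the two splittings combined with the definition of $\mu(V)$ applied to $U$ (valid also if $U\equiv 0$) and the concentration bound applied to $V_n$ give
\[
\mu(V) \ge \mu(V)A + B\left(1-\frac{\Lambda}{\ga_H}\right),
\]
which rearranges to $\mu(V)B\ge B(1-\Lambda/\ga_H)$. Hypothesis \eqref{Eq} then forces $B=0$, so $U_n\to U$ strongly in $\Dext$, $U\nequiv 0$, and $U$ attains the infimum in \eqref{eq:def_mu}. The main obstacle I anticipate is obtaining the sharp constant $\Lambda$ in the concentration estimate rather than $\sum_i\lambda_i^+$: this is what makes the local formulation on the extension essential, since only the partition of unity on $\overline{\R^{N+1}_+}$ together with the disjointness of the singular supports yields the desired pointwise bound, with the cross terms from differentiating the cutoffs absorbed via weighted Rellich compactness.
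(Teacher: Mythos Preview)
Your argument is correct and takes a genuinely different route from the paper's. The paper does not carry out the concentration--compactness scheme you describe; instead, it invokes Lemma~\ref{Lem-2} (proved earlier via the positivity criterion and explicit supersolutions) to obtain a potential $\bar V$ with $\mu(\bar V)\geq 1-\bar\lambda/\gamma_H-\alpha/2$, writes $V=\bar V+\bar W$ with $\bar W\in L^{N/2s}(\R^N)$, and then uses compactness of the $\bar W$--term (Lemma~\ref{lemma:compact_emb}) along a minimizing sequence to force $\int_{\R^N}\bar W\,|\Tr U|^2>0$, hence $U\not\equiv 0$; the attainment then follows from a short Brezis--Lieb identity combined with the trivial bound $\mu(V)$ on the residual quotient. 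In contrast, you obtain the sharp bound $1-\Lambda/\gamma_H$ on the residual directly, by localizing the extended Hardy inequality \eqref{eq:hardy_ext} with disjoint cut-offs $\eta_i,\eta_\infty$ in $\overline{\R^{N+1}_+}$ and absorbing the commutator terms via the compact embedding $H^1(\omega;t^{1-2s})\hookrightarrow L^2(\omega;t^{1-2s})$ on the compact sets $\supp\nabla\eta_j$. This bypasses Lemma~\ref{Lem-2} (and therefore the entire positivity criterion machinery) and gives the result in one stroke: the dichotomy $\mu(V)B\geq(1-\Lambda/\gamma_H)B$ together with \eqref{Eq} forces $B=0$ and hence strong convergence. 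The price you pay is the need to justify the local $L^2(t^{1-2s})$ bound for $V_n$ on $\supp\nabla\eta_j$ (which does not follow from membership in $\Dext$ alone), but this is available via the weighted Hardy inequality in $\R^{N+1}_+$; the paper's approach instead trades this technical point for dependence on the earlier supersolution construction.
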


The paper is organized as follows. In Section \ref{sec:preliminaries}
we recall some known results about spaces involving fractional
derivatives  and weighted spaces in $\R^{N+1}_+$
 and we prove some estimates needed in the rest of the
article. In Section  \ref{sec:proof-theor-refpr} we prove Theorem
\ref{Prop-2}.  
 In Section \ref{sec:criterion} we prove the positivity
criterion, i.e. Lemma \ref{Criterion}, while in Section
\ref{sec:shattering} we look for upper and lower bounds of the
quantity $\mu(V)$. In Section \ref{sec:perturbation} we investigate
the persistence of the positivity of $\mu(V)$, when the potential $V$
is subject to a perturbation far from the origin or close to a pole. Section
\ref{sec:localization} is devoted to the proof of Theorem
\ref{thm:separation}, that is the primary tool used in the proof of
Theorem \ref{theorem}, pursued in Section
\ref{sec:thm}. Finally, in Section \ref{sec:prop} we prove
Proposition \ref{Prop-1}.

\medskip\noindent{\bf Notation.} 
We list below some notation  used throughout the paper:
\begin{itemize}
 \item[-] $B'(x,r):=\{y\in\R^N\colon \abs{x-y}<r\}$, $B'_r:=B'(0,r)$ for the balls in $\R^N$;
 \item[-] $\R^{N+1}_+:=\{(t,x)\in\R^{N+1}\colon t>0,~x\in\R^N\}$;
 \item[-] $B_r^+:=\{z\in \R^{N+1}_+\colon \abs{z}<r\}$ for the half-balls in $\R^{N+1}_+$;
 \item[-] $\S^N:=\{z\in \R^{N+1}\colon \abs{z}=1\}$ is the unit $N$-dimensional sphere;
 \item[-] $\S^N_+:=\S^N\cap \R^{N+1}_+$ and $\S^{N-1}:=\partial \S^N_+$;
 \item[-] $S_r^+:=\{r\theta\colon \theta \in \S^N_+\}$ denotes a positive half-sphere with arbitrary radius $r>0$;
 \item[-] $\dS$ and $\ds$ denote the volume element in $N$ and $N-1$ dimensional spheres, respectively;
 \item[-] for $t\in\R$, $t^+:=\max\{0,t\}$ and $t^-:=\max\{0,-t\}$.
 
\end{itemize}

\section{Preliminaries}\label{sec:preliminaries}

In this section we clarify some details about the spaces involved in
our exposition and  their relation with the fractional Laplace operator, we recall basic known facts and we prove some introductory results.

\subsection{Preliminaries on Fractional Sobolev Spaces  and  weighted spaces in \texorpdfstring{$\R^{N+1}_+$}{ }}
Let us consider the homogenous Sobolev space $\Ds$ defined in Section \ref{sec:intr}. Thanks to the Hardy-Littlewood-Sobolev inequality
\begin{equation}\label{eq:sobolev}
  S\norm{u}_{L^{2^*_s}(\R^N)}^2\leq \norm{u}_{\Ds}^2,
\end{equation}
that holds for all functions $u\in \Ds$, we have that $\Ds$ is continuously embedded in $L^{2^*_s}(\R^N)$, where $2^*_s:=\frac{2N}{N-2s}$ is the critical Sobolev exponent. 
Combining \eqref{eq:min_ext_ineq} and \eqref{eq:caff_silv} with \eqref{eq:hardy} and \eqref{eq:sobolev}, we obtain, respectively
\begin{equation}\label{eq:hardy_ext}
 \kappa_s \ga_H \int_{\R^N}\frac{\abs{\Tr U}^2}{\abs{x}^{2s}}\dx\leq \int_{\R^{N+1}_+}t^{1-2s}\abs{\nabla U}^2\dxdt\quad \text{for all }U\in \Dext
\end{equation}
and
\begin{equation}\label{eq:sobolev_ext}
 \kappa_s S\norm{\Tr U}_{L^{2^*_s}(\R^N)}^2\leq \int_{\R^{N+1}_+}t^{1-2s}\abs{\nabla U}^2\dxdt\quad \text{for all }U\in \Dext.
\end{equation}
Moreover, just as a consequence of \eqref{eq:min_ext_ineq} and \eqref{eq:caff_silv}, we have that
\begin{equation}\label{eq:dir_princ}
  \kappa_s\norm{\Tr U}_{\Ds}^2\leq
  \int_{\R^{N+1}_+}t^{1-2s}\abs{\nabla U}^2\dxdt \quad
  \text{for all }U\in \Dext.
\end{equation}

Now we state a result providing a compact trace embedding, which will be useful in the following.
\begin{lemma}\label{lemma:compact_emb}
	Let $p\in L^{N/2s}(\R^N)$.  If $(U_n)_n\subseteq \Dext$ and $U\in\Dext$ are such that
	$U_n\deb U$ weakly in $\Dext$ as $n\to\infty$, then
        $\int_{\R^N}p|\Tr U_n|^2\dx\to \int_{\R^N}p|\Tr U|^2\dx$
as $n\to\infty$.
 In
        particular, if $p>0$ a.e. in $\R^N$, 
 the trace operator
	\[
	\Dext \hookrightarrow L^2(\R^N;p\dx)
	\]
	is compact, where $L^2(\R^N;p\dx):=\left\{u\in L^1_{\textup{loc}}(\R^N)\colon \int_{\R^N}p\abs{u}^2\dx<\infty \right\}$.
\end{lemma}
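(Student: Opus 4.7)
The plan is to prove the convergence statement first, then derive the compact embedding as an easy corollary. Since the trace operator $\Tr\colon \Dext\to\Ds$ is linear and continuous by \eqref{eq:3}, the weak convergence $U_n\deb U$ in $\Dext$ passes to $\Tr U_n\deb \Tr U$ weakly in $\Ds$, and by the Sobolev embedding \eqref{eq:sobolev} also weakly in $L^{2^*_s}(\R^N)$. In particular the sequence $(\Tr U_n)_n$ is bounded in $L^{2^*_s}(\R^N)$, and so is $\Tr U$.

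To handle the integral, I would split the weight. For any $\eps>0$, by density of bounded, compactly supported functions in $L^{N/2s}(\R^N)$, write $p=p_\eps+q_\eps$ with $p_\eps\in L^\infty(\R^N)$ supported in some ball $B'_R$ and $\|q_\eps\|_{L^{N/2s}(\R^N)}<\eps$. The tail piece is controlled by H\"older's inequality with dual exponents $N/(2s)$ and $N/(N-2s)$ (so that $2\cdot N/(N-2s)=2^*_s$):
\[
\left|\int_{\R^N}q_\eps\,|\Tr U_n|^2\dx\right|\leq\|q_\eps\|_{L^{N/2s}}\|\Tr U_n\|_{L^{2^*_s}}^2\leq C\eps,
\]
uniformly in $n$, with the analogous bound for $U$ in place of $U_n$.

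For the compactly supported piece I would rely on local compactness of $\Ds$: restricting a function in $\Ds$ to the bounded Lipschitz domain $B'_R$ yields a function in the usual fractional Sobolev space $H^s(B'_R)$ with seminorm controlled by $\|\cdot\|_{\Ds}$ and $L^2$-norm controlled via \eqref{eq:sobolev} and H\"older. By the standard Rellich-Kondrachov theorem for fractional Sobolev spaces on bounded Lipschitz domains, the embedding $H^s(B'_R)\hookrightarrow L^2(B'_R)$ is compact, so $\Tr U_n\to\Tr U$ strongly in $L^2(B'_R)$. Writing
\[
\int_{\R^N}p_\eps(|\Tr U_n|^2-|\Tr U|^2)\dx=\int_{B'_R}p_\eps(\Tr U_n-\Tr U)(\Tr U_n+\Tr U)\dx,
\]
Cauchy-Schwarz together with the $L^\infty$-bound on $p_\eps$ and the $L^2(B'_R)$-boundedness of $\Tr U_n+\Tr U$ drives this to zero. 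Combining the two bounds and letting $\eps\to 0$ yields the desired convergence.

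For the compact embedding part, once $p>0$ a.e., the same H\"older-Sobolev chain shows that $\Tr\colon\Dext\to L^2(\R^N;p\dx)$ is a bounded linear operator. Given any bounded sequence $(U_n)_n\subset\Dext$, reflexivity allows us to extract a weakly convergent subsequence $U_{n_k}\deb U$; the part just proved gives $\|\Tr U_{n_k}\|_{L^2(p\dx)}\to\|\Tr U\|_{L^2(p\dx)}$, while boundedness of $\Tr$ forces $\Tr U_{n_k}\deb\Tr U$ weakly in the Hilbert space $L^2(\R^N;p\dx)$. Weak convergence plus convergence of norms in a Hilbert space yields strong convergence, establishing compactness. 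I expect the only delicate point to be the justification of the local $L^2$-compactness for $\Ds$-functions, which, while standard, hinges on the interplay between the homogeneous global norm and the inhomogeneous Gagliardo seminorm on a bounded domain.
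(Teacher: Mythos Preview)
Your proof is correct and uses the same essential ingredients as the paper: local $L^2$-compactness of traces (via fractional Rellich--Kondrachov) together with uniform control coming from $p\in L^{N/2s}$ and the Sobolev inequality. The packaging differs slightly. You split the weight $p=p_\eps+q_\eps$ and handle the two pieces by hand, then deduce compactness via the Hilbert-space argument ``weak convergence $+$ convergence of norms $\Rightarrow$ strong convergence''. The paper instead proves directly the stronger assertion $\int_{\R^N}|p|\,|\Tr(U_n-U)|^2\dx\to 0$ by invoking Vitali's Convergence Theorem: the a.e.\ convergence comes from local compactness, and uniform integrability is exactly the H\"older--Sobolev estimate $\int_\omega|p|\,|\Tr(U_n-U)|^2\leq C\|p\|_{L^{N/2s}(\omega)}$ applied to small-measure sets and complements of large balls. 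The paper's route gives the compact embedding (when $p>0$) immediately without the extra weak/norm step; your route is slightly more elementary in that it avoids naming Vitali, at the cost of a bit more bookkeeping at the end. Both are standard and equivalent in strength here.
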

\begin{proof}
	Let $(U_n)_n\subseteq \Dext$ and $U\in\Dext$ be such 
	$U_n\deb U$ weakly in $\Dext$ as $n\to\infty$.		
	Hence, in view of continuity of the trace operator
	\eqref{eq:3} and classical compactness results for fractional
	Sobolev spaces (see e.g. \cite[Theorem
	7.1]{valdinoci-dinezza-palatucci}), we have that 
	$\Tr U_n\to \Tr U$ in $L^2_{\rm loc}(\R^N)$ and a.e. in $\R^N$.
	Furthermore, by continuity of the trace operator
	\eqref{eq:3} and \eqref{eq:sobolev_ext}, we have that, for every
	$\omega\subset\R^N$ measurable, 
	\[
	\int_{\omega}|p||\Tr(U_n-U)|^2\dx\leq C\|p\|_{L^{N/(2s)}(\omega)}
	\|U_n-U\|^2_{\Dext}
	\]
	for some positive constant $C>0$ independent of $\omega$ and $n$.
	Therefore, by Vitali's Convergence Theorem we can conclude
        that 
$\lim_{n\to\infty}\int_{\R^N}|p||\Tr(U_n-U)|^2\dx=0$, from which the
conclusion follows.
\end{proof}
We finally introduce a class of weighted Lebesgue and Sobolev spaces,
on bounded open 
Lipschitz sets $\omega\subseteq \R^{N+1}_+$ in the upper half-space. Namely, we define
\[
	L^2(\omega;t^{1-2s}):=\left\{U\colon \omega\to \R~\text{measurable: such that }\int_{\omega}t^{1-2s}\abs{U}^2\dxdt <\infty  \right\}
\]
and the weighted Sobolev space
\[
	H^1(\omega;t^{1-2s}):=\left\{U\in L^2(\omega;t^{1-2s})\colon \nabla U\in L^2(\omega;t^{1-2s}) \right\}.
\]
From the fact that the weight $t^{1-2s}$ belongs to the second
Muckenhoupt class (see, for instance, \cite{Fabes1982,Fabes1990}) and
thanks to well known weighted inequalities, one can prove that the
embedding $H^1(\omega;t^{1-2s})\hookrightarrow L^2(\omega;t^{1-2s})$
is compact, see for details \cite[Proposition 7.1]{FF-preprint} and \cite{OO}. In addition, in the particular case of $\omega=B_r^+$ one can prove that the trace operators
\begin{gather*}
	H^1(B_r^+;t^{1-2s})\hookrightarrow L^2(B_r'),\quad
	H^1(B_r^+;t^{1-2s})\hookrightarrow L^2(S_r^+;t^{1-2s}),
\end{gather*}
 are well defined and compact, where
 \[
 L^2(S_r^+;t^{1-2s}):=\left\{\psi \colon S_r^+\to
   \R~\text{measurable}:
\int_{S_r^+}t^{1-2s}\abs{\psi }^2\dS <\infty  \right\}.
 \]

\subsection{The Angular Eigenvalue Problem}
Let us consider, for any $\lambda\in\R$, the problem
\begin{equation}\label{eq:angular_eigen}
 \begin{bvp}
  -\divergence_{\S^N} (\theta_1^{1-2s}\nabla_{\S^N} \psi)&=\mu\theta_1^{1-2s}\psi,  && \text{in }\S^{N}_+ ,\\
  -\lim_{\theta_1\to 0^+}\theta_1^{1-2s}\nabla_{\S^N}\psi\cdot \bm{e}_1&=\kappa_s\lambda\psi, &&\text{on }\S^{N-1},
 \end{bvp}
\end{equation}
where $\bm{e}_1=(1,0,\dots,0)\in \R^{N+1}_+$ and $\nabla_{\S^N}$
denotes the gradient on the unit $N$-dimensional sphere $\S^N$. In
order to  give a variational formulation of
\eqref{eq:angular_eigen}   we introduce the following Sobolev space
\[
	H^1(\S^N_+;\theta_1^{1-2s}):=\left\{\psi\in
        L^2(\S^N_+;\theta_1^{1-2s})\colon 
\int_{\S_N^+}\theta_1^{1-2s}|\nabla_{\S^N}\psi|^2\,dS<+\infty \right\}.
\]
  We say that $\psi\in H^1(\S^N_+;\theta_1^{1-2s})$ and $\mu\in\R$ weakly solve \eqref{eq:angular_eigen} if
\begin{equation*}
 \int_{\S^N_+}\theta_1^{1-2s}\nabla_{\S^N}\psi(\theta)\cdot\nabla_{\S^N}\varphi(\theta)\dS=\mu\int_{\S^N_+}\theta_1^{1-2s}\psi(\theta)\varphi(\theta)\dS+\kappa_s\lambda\int_{\S^{N-1}}\psi(0,\theta')\varphi(0,\theta')\ds
\end{equation*}
for all $\varphi\in H^1(\S^N_+;\theta_1^{1-2s})$. 
 By standard spectral arguments,   if
$\lambda<\gamma_H$, there exists a diverging sequence of real eigenvalues of problem \eqref{eq:angular_eigen}
\begin{equation*}
 \mu_1(\lambda)\leq \mu_2(\lambda)\leq \cdots\leq \mu_n(\lambda)\leq \cdots
\end{equation*}
Moreover, each eigenvalue has finite multiplicity (which is counted in
the enumeration above) and
$\mu_1(\lambda)>-\left( \frac{N-2s}{2} \right)^2$ (see \cite[Lemma 2.2]{Fall2014}). For every
$n\geq 1$ we choose an eigenfunction
$\psi_n\in H^1(\S^N_+;\theta_1^{1-2s})\setminus\{0\}$, corresponding
to $\mu_n(\lambda)$, such that
$\int_{\S^N_+}\theta_1^{1-2s}\abs{\psi_n}^2\dS=1$. In addition, we
choose the family $\{\psi_n\}_n$ in such a way that it is an
orthonormal basis of $L^2(\S^N_+;\theta_1^{1-2s})$. We refer to
\cite{Fall2014} for further details.

In \cite{Fall2014} the following implicit characterization of
$\mu_1(\lambda)$ is given. For any $\alpha\in\left( 0,\frac{N-2s}{2} \right)$ we define
\begin{equation}\label{eq:def_lambda_alpha}
 \Lambda(\alpha):=2^{2s}\frac{\Gamma\left(\frac{N+2s+2\alpha}{4}\right)\Gamma\left(\frac{N+2s-2\alpha}{4}\right)}{\Gamma\left(\frac{N-2s+2\alpha}{4}\right)\Gamma\left(\frac{N-2s-2\alpha}{4}\right)}.
\end{equation}
It is known (see e.g. \cite{Frank2008a} and  \cite[Proposition
2.3]{Fall2014}) that the map $\alpha\mapsto  \Lambda(\alpha)$ is
continuous and monotone decreasing. Moreover 
\begin{equation}\label{eq:8}
 \lim_{\alpha\to 0^+}\Lambda(\alpha)=\gamma_H,\qquad \lim_{\alpha\to \frac{N-2s}{2}}\Lambda(\alpha)=0,
\end{equation}
see Figure 1. Furthermore, in \cite[Proposition 2.3]{Fall2014} it is proved that, 
for every $\alpha\in\left( 0,\frac{N-2s}{2} \right)$,
\begin{equation}\label{eq:4}
 \mu_1(\Lambda(\alpha))=\alpha^2-\left( \frac{N-2s}{2} \right)^{\!2}.
\end{equation}
\begin{figure}\label{fig:1}\tiny
     \begin{tikzpicture} 
   \draw [->] (0,0) -- (0,2) node [above] {$\Lambda(\alpha)$};
    \draw [->] (-1,0.5) -- (1.5,0.5) node [below] {$\alpha$};
        \draw [thick] (0,1.3) to [out=350,in=120] (1.17,0.5);
       \node [left] at (0,1.3) {$\ga_H$};
    \node [below] at (0.77,0.5) {$\frac{N-2s}{2}$};
    \draw [fill] (1.17,0.5) circle [radius=0.04];
    \draw [fill] (0,1.3) circle [radius=0.04];
    \end{tikzpicture}
\caption{The function $\Lambda$}    
\end{figure}
In particular, for every $\lambda\in(0,\gamma_H)$ there exists one
and only one $\alpha\in \big(0,\frac{N-2s}{2}\big)$ such that
$\Lambda(\alpha)=\lambda$ and hence $\mu_1(\lambda)=\alpha^2-\big (\frac{N-2s}{2}\big)^2<0$.

We recall the following result from \cite {Fall2018}.

\begin{lemma}[{\cite[Lemma 4.1]{Fall2018}}]\label{lemma:func_fall}
 For every $\alpha\in\left( 0,\frac{N-2s}{2} \right)$ there exists
  $\Upsilon_\alpha:\overline{\R^{N+1}_+}\setminus\{0\}\to\R$
such that  $\Upsilon_\alpha$ is  locally H\"older
                continuous in $\overline{\R^{N+1}_+}\setminus\{0\}$,
  $\Upsilon_\alpha>0$ in $\overline{\R^{N+1}_+}\setminus\{0\}$,  and
  \begin{equation}\label{eq:phi_alpha}
  \begin{bvp}
   -\divergence(t^{1-2s}\nabla \Upsilon_\alpha)&=0, && \text{in }\R^{N+1}_+, \\
   \Upsilon_\alpha(0,x)&=\abs{x}^{-\frac{N-2s}{2}+\alpha}, && \text{on }\R^N, \\
   -\lim_{t\to 0}t^{1-2s}\frac{\partial \Upsilon_\alpha}{\partial t}&=\kappa_s\Lambda(\alpha)\abs{x}^{-2s}\Upsilon_\alpha, && \text{on }\R^N,
  \end{bvp}
 \end{equation}
 in a weak sense. Moreover, $\Upsilon_\alpha\in H^1(B_R^+;t^{1-2s})$ for every $R>0$.
\end{lemma}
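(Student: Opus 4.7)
The strategy is to construct $\Upsilon_\alpha$ explicitly by separation of variables, exploiting the eigenvalue identity \eqref{eq:4}. Fix $\alpha\in(0,\frac{N-2s}{2})$, set $\gamma:=-\frac{N-2s}{2}+\alpha$, and let $\psi_1\in H^1(\S^N_+;\theta_1^{1-2s})$ denote a first eigenfunction of \eqref{eq:angular_eigen} with $\lambda=\Lambda(\alpha)$, so that its eigenvalue is $\mu_1(\Lambda(\alpha))=\alpha^2-(\frac{N-2s}{2})^2$. Since $\mu_1$ is simple with a one-signed eigenfunction, and since the angular problem is invariant under rotations of $\R^{N+1}$ fixing $\bm e_1$, $\psi_1$ must depend only on $\theta_1$; in particular its trace on $\S^{N-1}$ is a positive constant, and I normalize $\psi_1$ so that this constant equals $1$. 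I then set
\[
 \Upsilon_\alpha(z):=|z|^\gamma\,\psi_1(z/|z|),\qquad z\in\overline{\R^{N+1}_+}\setminus\{0\}.
\]

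Writing $z=r\theta$ with $\theta\in\overline{\S^N_+}$ and using $t^{1-2s}=r^{1-2s}\theta_1^{1-2s}$, a computation in spherical coordinates yields
\[
 -\divergence(t^{1-2s}\nabla\Upsilon_\alpha)=r^{\gamma-2s-1}\Big[-\gamma(\gamma+N-2s)\theta_1^{1-2s}\psi_1-\divergence_{\S^N}(\theta_1^{1-2s}\nabla_{\S^N}\psi_1)\Big],
\]
which vanishes in $\R^{N+1}_+$ because the bracketed quantity is the weak form of the angular equation in \eqref{eq:angular_eigen} at the eigenvalue $\gamma(\gamma+N-2s)=\alpha^2-(\frac{N-2s}{2})^2=\mu_1(\Lambda(\alpha))$. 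For the Neumann-type condition on $\{t=0\}$, using $\partial_t\theta=(\bm e_1-\theta_1\theta)/r$ and the tangency of $\nabla_{\S^N}\psi_1$ to $\S^N$, I obtain
\[
 t^{1-2s}\partial_t\Upsilon_\alpha=\gamma\,r^{\gamma-2s}\theta_1^{2-2s}\psi_1+r^{\gamma-2s}\theta_1^{1-2s}\nabla_{\S^N}\psi_1\cdot\bm e_1.
\]
As $t\to 0^+$, the first summand vanishes (because $s<1$), and the Neumann condition of \eqref{eq:angular_eigen} turns the second into $-\kappa_s\Lambda(\alpha)|x|^{\gamma-2s}$, giving
\[
 -\lim_{t\to 0^+}t^{1-2s}\partial_t\Upsilon_\alpha(t,x)=\kappa_s\Lambda(\alpha)|x|^{-2s}\Upsilon_\alpha(0,x);
\]
the Dirichlet trace identity $\Upsilon_\alpha(0,x)=|x|^\gamma$ is immediate from the normalization of $\psi_1$.

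The remaining assertions are then soft. Positivity of $\Upsilon_\alpha$ on $\overline{\R^{N+1}_+}\setminus\{0\}$ is inherited from positivity of $\psi_1$, while local H\"older continuity there reduces, by the homogeneity of the ansatz, to H\"older continuity of $\psi_1$ on $\overline{\S^N_+}$; the latter follows from De Giorgi--Nash--Moser type regularity for degenerate elliptic operators with $A_2$ weights, combined with boundary regularity for the corresponding weighted Neumann problem near $\S^{N-1}$, in the spirit of \cite[Proposition~4.11]{Cabre2014}. Finally, switching to polar coordinates, both $\int_{B_R^+}t^{1-2s}\Upsilon_\alpha^2\dxdt$ and $\int_{B_R^+}t^{1-2s}|\nabla\Upsilon_\alpha|^2\dxdt$ factor as a product of a finite angular integral (since $\psi_1\in H^1(\S^N_+;\theta_1^{1-2s})$) and a radial integral of the form $\int_0^R r^{2\alpha+1}\,dr$ or $\int_0^R r^{2\alpha-1}\,dr$, both convergent because $\alpha>0$. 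The main obstacle, in my view, is precisely this boundary H\"older regularity: separation of variables reduces the original PDE to a clean angular problem, but extracting continuity up to the degenerate Neumann boundary $\{t=0\}\setminus\{0\}$ requires careful use of regularity theory for the Caffarelli--Silvestre extension.
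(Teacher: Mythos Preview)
The paper does not give its own proof of this lemma: it is quoted verbatim from \cite[Lemma~4.1]{Fall2018} and used as a black box. Your separation-of-variables construction $\Upsilon_\alpha(z)=|z|^{\gamma}\psi_1(z/|z|)$ is precisely the standard argument behind that result, and your computations (the identification $\gamma(\gamma+N-2s)=\mu_1(\Lambda(\alpha))$, the pointwise Neumann limit, and the radial integrability check $\int_0^R r^{2\alpha-1}\,dr<\infty$) are all correct.

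One structural point is worth noting. In the paper, the fact that $\psi_1$ has a constant positive trace on $\S^{N-1}$ (Lemma~\ref{lemma:mu_psi_1}(3)) is \emph{deduced from} the existence of $\Upsilon_\alpha$ via the asymptotic expansion of \cite[Theorem~4.1]{Fall2014}. You instead obtain the constancy of the trace \emph{a priori} from simplicity of $\mu_1$ together with rotational invariance about the $\bm e_1$-axis, which is cleaner and avoids any circularity. To complete this step you should also remark that the constant trace is strictly positive: since $\mu_1(\Lambda(\alpha))=\alpha^2-\big(\tfrac{N-2s}{2}\big)^2<0$, a first eigenfunction with vanishing boundary trace would force $\mu_1\geq 0$, a contradiction; hence your normalization $\psi_1|_{\S^{N-1}}=1$ is legitimate. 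With that in place, positivity of $\Upsilon_\alpha$ on $\overline{\R^{N+1}_+}\setminus\{0\}$ follows from positivity of $\psi_1$ in $\S^N_+$, continuity up to $\S^{N-1}$, and the boundary value $1$, so you do not need to invoke a Hopf principle here.
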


The first eigenvalue  $\mu_1(\lambda)$ satisfies the properties described
in the following lemma. 
\begin{lemma}\label{lemma:mu_psi_1}
 Let $\lambda<\gamma_H$. Then the first eigenvalue of problem
 \eqref{eq:angular_eigen} can be characterized as 
 \begin{equation*}
   \mu_1(\lambda)=\inf_{\substack{ \psi\in H^1(\S^N_+;\theta_1^{1-2s}) \\ \psi\nequiv 0 } }\frac{\displaystyle \int_{\S^N_+}\theta_1^{1-2s}\abs{\nabla_{\S^N}\psi}^2\dS-\kappa_s\lambda\int_{\S^{N-1}}\abs{\psi}^2\ds }{\displaystyle \int_{\S^N_+} \theta_1^{1-2s}\abs{\psi}^2\dS}
 \end{equation*}
 and the above infimum is attained by
 $\psi_1\in H^1(\S^N_+;\theta_1^{1-2s})$, which weakly solves
 \eqref{eq:angular_eigen} for $\mu=\mu_1(\lambda)$. Moreover
 \begin{enumerate}
  \item $\mu_1(\lambda)$ is simple, i.e. if $\psi$ attains $\mu_1(\lambda)$ then $\psi=\delta\psi_1$ for some $\delta\in\R$;
  \item either $\psi_1>0$ or $\psi_1<0$ in $\S^N_+$;
  \item if $\lambda>0$ and $\psi_1>0$, then the trace of $\psi_1$ on $\S^{N-1}$ is positive and constant;
  \item if $\lambda=0$ then $\psi_1$ is constant in $\S^N_+$.
 \end{enumerate}
\end{lemma}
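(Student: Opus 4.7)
The plan is to establish the variational characterization by the direct method and then derive the four properties in the listed order, exploiting the rotational symmetry of the weight and a degenerate Hopf principle at the very end.

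First I would set the Rayleigh quotient
\[
R(\psi):=\frac{\displaystyle\int_{\S^N_+}\theta_1^{1-2s}|\nabla_{\S^N}\psi|^2\dS-\kappa_s\lambda\int_{\S^{N-1}}|\psi|^2\ds}{\displaystyle\int_{\S^N_+}\theta_1^{1-2s}|\psi|^2\dS}
\]
on $H^1(\S^N_+;\theta_1^{1-2s})\setminus\{0\}$ and prove that its infimum coincides with the spectrally defined $\mu_1(\lambda)$. Since the weight $\theta_1^{1-2s}$ lies in the $A_2$ Muckenhoupt class, the embeddings $H^1(\S^N_+;\theta_1^{1-2s})\hookrightarrow L^2(\S^N_+;\theta_1^{1-2s})$ and $H^1(\S^N_+;\theta_1^{1-2s})\hookrightarrow L^2(\S^{N-1})$ are compact, in analogy with the half-ball statements recalled in Section~\ref{sec:preliminaries}. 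A fractional Hardy-type inequality on the half-sphere, obtainable from \eqref{eq:hardy_ext} through a homogeneous-extension/restriction argument combined with the assumption $\lambda<\gamma_H$, gives coercivity of the numerator. The direct method then produces a minimizer $\psi_1\in H^1(\S^N_+;\theta_1^{1-2s})\setminus\{0\}$ whose Euler--Lagrange equation is precisely \eqref{eq:angular_eigen} with $\mu=\mu_1(\lambda)$, and standard self-adjoint spectral theory identifies this minimum with the first eigenvalue in the enumeration above.

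For (2) I would use that $|\psi|\in H^1(\S^N_+;\theta_1^{1-2s})$ whenever $\psi$ is, with $|\nabla_{\S^N}|\psi||\leq|\nabla_{\S^N}\psi|$ a.e.\ and $\Tr|\psi|=|\Tr\psi|$, so that $R(|\psi_1|)\leq R(\psi_1)=\mu_1(\lambda)$ and $|\psi_1|$ is again a minimizer. Replacing $\psi_1$ with $|\psi_1|$ I may assume $\psi_1\geq 0$, and the strong maximum principle for the degenerate elliptic operator with $A_2$ weight upgrades this to $\psi_1>0$ in $\S^N_+$. Simplicity (1) then follows by the standard argument: any other first eigenfunction $\widetilde\psi_1$ can be replaced by $\widetilde\psi_1-c\psi_1$, orthogonal to $\psi_1$ in $L^2(\S^N_+;\theta_1^{1-2s})$; this element lies in the same eigenspace, hence has constant sign by what has just been proved, but being $L^2(\theta_1^{1-2s})$-orthogonal to the strictly positive $\psi_1$ it must vanish.

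To deal with (3) I would exploit that $\theta_1^{1-2s}$ depends only on the first coordinate, so the whole problem is invariant under the $O(N)$ action $\widehat R(\theta_1,\theta'):=(\theta_1,R\theta')$, $R\in O(N)$, which also preserves $\S^{N-1}=\{\theta_1=0\}$. If $\psi_1$ is a positive first eigenfunction, then so is $\psi_1\circ\widehat R$, and simplicity forces $\psi_1\circ\widehat R=\psi_1$ after $L^2(\theta_1^{1-2s})$-normalization. Thus $\psi_1$ depends only on $\theta_1$, so $\Tr\psi_1$ is constant on $\S^{N-1}$. Were this constant zero, the degenerate Hopf principle (Proposition~\ref{prop:cabre_sire} of the Appendix) applied to $\psi_1>0$ in $\S^N_+$ with $\Tr\psi_1\equiv 0$ would yield $-\lim_{\theta_1\to 0^+}\theta_1^{1-2s}\nabla_{\S^N}\psi_1\cdot\bm{e}_1<0$ on $\S^{N-1}$, contradicting the boundary condition of \eqref{eq:angular_eigen} which reduces to $0$ there. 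Finally, (4) is immediate: constants belong to $H^1(\S^N_+;\theta_1^{1-2s})$ (the weight $\theta_1^{1-2s}$ is integrable on $\S^N_+$ for $s\in(0,1)$) and yield $R\equiv 0$, so $\mu_1(0)=0$ and every minimizer satisfies $|\nabla_{\S^N}\psi_1|\equiv 0$ on the connected manifold $\S^N_+$, hence is constant.

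The main obstacle I anticipate is Step~3, specifically the clean application of the Hopf principle in this degenerate, weighted setting to rule out a vanishing boundary trace; since the paper already makes Proposition~\ref{prop:cabre_sire} available, this should amount to careful sign bookkeeping at the boundary rather than the introduction of a new tool.
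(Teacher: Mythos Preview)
Your treatment of the variational characterization and of items (1), (2), (4) matches the paper, which dismisses these as classical with a reference to \cite{Salsa2016}.

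The real divergence is in item (3). The paper does not use symmetry: it picks the unique $\alpha\in\big(0,\tfrac{N-2s}{2}\big)$ with $\Lambda(\alpha)=\lambda$, takes the explicit function $\Upsilon_\alpha$ of Lemma~\ref{lemma:func_fall}, whose trace on $\R^N$ is $|x|^{-\frac{N-2s}{2}+\alpha}$, and applies the blow-up result \cite[Theorem~4.1]{Fall2014}, which gives $\tau^{-a_{\Lambda(\alpha)}}\Upsilon_\alpha(0,\tau\theta')\to C\,\psi_1(0,\theta')$ in $C^{1,\beta}(\S^{N-1})$. Since by \eqref{eq:4} the left-hand side is identically $1$, $\psi_1|_{\S^{N-1}}$ is a positive constant. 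Your $O(N)$-symmetry route is more self-contained and avoids importing the asymptotic theorem from \cite{Fall2014}; the paper's route, on the other hand, delivers positivity and constancy of the trace in one stroke, without a separate boundary-sign argument.

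There is, however, a gap in your final step. For $\lambda>0$ one has $\mu_1(\lambda)<0$ (test the Rayleigh quotient with a nonzero constant), so $-\divergence_{\S^N}(\theta_1^{1-2s}\nabla_{\S^N}\psi_1)=\mu_1(\lambda)\,\theta_1^{1-2s}\psi_1<0$ in $\S^N_+$: $\psi_1$ is a strict \emph{sub}solution, whereas Proposition~\ref{prop:cabre_sire} as stated requires a supersolution. The repair does not need Hopf at all: once your symmetry argument shows that $\Tr\psi_1$ is a constant, if that constant were zero the boundary integral in $R(\psi_1)$ would drop out and hence $\mu_1(\lambda)=R(\psi_1)\geq 0$, contradicting $\mu_1(\lambda)<0$.
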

\begin{proof}
 The proof of the fact that $\mu_1(\lambda)$ is reached is classical,
 as well as the proofs of points (1) and (2), see for instance
 \cite[Section 8.3.3]{Salsa2016}.
 
 In order to prove (3), let us first  observe that, if
 $\lambda\in(0,\gamma_H)$, there exists one and only one
 $\alpha\in \big(0,\frac{N-2s}{2}\big)$ such that
 $\Lambda(\alpha)=\lambda$. For this $\alpha$ let $\Upsilon_\alpha>0$
 be the solution of \eqref{eq:phi_alpha}. Thanks to \cite[Theorem
 4.1]{Fall2014}, it is possible to describe the behaviour of
 $\Upsilon_\alpha$ near the origin: in particular, since $\Upsilon_\alpha>0$,
 we have that there exists $C>0$ such that
 \begin{equation*}
  \tau^{-a_{\Lambda(\alpha)}}\Upsilon_\alpha(0,\tau\theta')\to C\psi_1(0,\theta')\qquad\text{in }C^{1,\beta}(\S^{N-1})\quad\text{as }\tau\to 0^+,
 \end{equation*}
 where
 \begin{equation*}
  a_{\Lambda(\alpha)}=-\frac{N-2s}{2}+\sqrt{\left(\frac{N-2s}{2}\right)^2+\mu_1(\Lambda(\alpha))}
 \end{equation*}
 Thanks to \eqref{eq:4} we have that
 $a_{\Lambda(\alpha)}=-\frac{N-2s}{2}+\alpha $; then
 $\tau^{-a_{\Lambda(\alpha)}}\Upsilon_\alpha(0,\tau\theta')\equiv 1$ and so
 $\psi_1(0,\theta')$ is positive and constant in $\S^{N-1}$.
 
 Finally, if  $\lambda=0$ then $\mu_1(0)=0$ is clearly attained by every constant function.
\end{proof}

\noindent
 We note that, in view of well known regularity results (see
Proposition \ref{prop:jlx} in the Appendix), $\psi_1\in C^{0,\beta}(
\overline{\S^N_+})$ for some $\beta\in (0,1)$.
Hereafter, we choose the first eigenfunction $\psi_1$ of
problem \eqref{eq:angular_eigen} to be strictly positive in $\S^N_+$. 
With this choice of $\psi_1$, we also have that, in view of the Hopf type
 principle proved in \cite[Proposition 4.11]{Cabre2014} (see Proposition \ref{prop:cabre_sire}),
 \begin{equation}\label{eq:5}
   \min_{\overline{\S^N_+}}\psi_1>0.
 \end{equation}

\subsection{Asymptotic Estimates of Solutions}

In this section, we describe the asymptotic behaviour of solutions to
equations of the type $-\divergence (t^{1-2s}\nabla \Phi)=0$, with
singular potentials appearing in the   Neumann-type boundary
conditions,  either  on positive half-balls $B_r^+$ or on their complement in $\R^{N+1}_+$.

\begin{lemma}\label{lemma:regul_1}
 Let $R_0>0$, $\lambda<\gamma_H$ and let $\Phi\in 
 H^1(B_{R_0}^+;t^{1-2s})$, $\Phi\geq 0$ a.e. in $ B_{R_0}^+$, $\Phi\nequiv 0$, be a weak solution of the following problem
 \begin{equation*}
  \begin{bvp}
    -\divergence (t^{1-2s}\nabla \Phi)&=0, && \text{in }B_{R_0}^+, \\
  -\lim_{t\to 0}t^{1-2s}\frac{\partial\Phi}{\partial t}&=\kappa_s(\lambda\abs{x}^{-2s}+q)\Phi, && \text{on }B_{R_0}',
  \end{bvp}
 \end{equation*}
 where $q\in C^1(B_{R_0}'\setminus\{0\})$ is such that
 \begin{equation*}
  \abs{q(x)}+\abs{x\cdot\nabla q(x)}=O(\abs{x}^{-2s+\eps}) \qquad\text{as }\abs{x}\to 0,
 \end{equation*}
 for some $\eps>0$. Then there exist $C_1>0$ and $R\leq R_0$
 such that
 \begin{equation}\label{eq:regul_1}
  \frac{1}{C_1}\abs{z}^{ a_\lambda}\leq \Phi(z)\leq C_1\abs{z}^{a_\lambda} \qquad\text{for all }z\in B_R^+ ,
 \end{equation}
 where 
\begin{equation}\label{eq:6}
  a_\lambda=-\frac{N-2s}{2}+\sqrt{\left( \frac{N-2s}{2} \right)^{\!2}+\mu_1(\lambda)}.
\end{equation}
Furthermore, if $0\leq \lambda<\gamma_H$, then there exists $C_2>0$ 
such that 
 \begin{equation}\label{eq:regul_2}
  \lim_{\abs{x}\to 0}\abs{x}^{  -a_\lambda}\Phi(0,x)=C_2.
 \end{equation}
\end{lemma}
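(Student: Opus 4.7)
\textbf{Proof plan for Lemma \ref{lemma:regul_1}.}

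The strategy is a standard Almgren-type frequency argument adapted to the weighted extension problem, combined with a blow-up analysis whose limit profile is classified via the angular eigenvalue problem \eqref{eq:angular_eigen}. For $r\in(0,R_0)$ I would introduce the quantities
\[
H(r):=\frac{1}{r^{N+1-2s}}\int_{S_r^+}t^{1-2s}\Phi^2\dS,\qquad
D(r):=\frac{1}{r^{N-2s}}\!\left(\int_{B_r^+}t^{1-2s}|\nabla\Phi|^2\dxdt-\kappa_s\!\int_{B_r'}(\lambda|x|^{-2s}+q)\Phi^2\dx\right),
\]
and the frequency $\mathcal{N}(r):=D(r)/H(r)$. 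The positivity $\Phi\ge 0$, $\Phi\nequiv 0$, together with Hardy's inequality \eqref{eq:hardy_ext} and the fact that $\lambda<\gamma_H$, ensures $H(r)>0$ for $r$ small enough, so $\mathcal{N}(r)$ is well defined. Using the equation, a Pohozaev-type identity obtained by testing with $(x,t)\cdot\nabla\Phi$ on $B_r^+$ yields explicit formulas for $H'(r)$ and $D'(r)$; thanks to the hypothesis $|q(x)|+|x\cdot\nabla q(x)|=O(|x|^{-2s+\varepsilon})$, the error produced by $q$ is integrable in $r$ near $0$ and one obtains a differential inequality of the form $\mathcal{N}'(r)\ge -C r^{\varepsilon-1}(\mathcal{N}(r)+c)$, which implies that $\mathcal{N}(r)+Cr^{\varepsilon}$ is monotone and hence that the limit $\gamma:=\lim_{r\to 0^+}\mathcal{N}(r)$ exists and is finite (a lower bound $\gamma>-\tfrac{N-2s}{2}$ comes from \eqref{eq:hardy_ext}).

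Next I would identify $\gamma=a_\lambda$ by a blow-up argument. Set $\Phi^\tau(z):=\Phi(\tau z)/\sqrt{H(\tau)}$. Exploiting the monotonicity of $\mathcal{N}$, one shows that $(\Phi^\tau)_\tau$ is bounded in $H^1(B_1^+;t^{1-2s})$ uniformly in $\tau$; along a subsequence $\Phi^\tau\rightharpoonup \widetilde\Phi$ weakly in $H^1_{\mathrm{loc}}$ and, by compactness of the trace and the decay of $q$, strongly on compact subsets of $\overline{\R^{N+1}_+}\setminus\{0\}$, so that $\widetilde\Phi\ge 0$, $\widetilde\Phi\nequiv 0$, and $\widetilde\Phi$ weakly solves the unperturbed, scale-invariant problem
\[
\begin{bvp}
-\divergence(t^{1-2s}\nabla\widetilde\Phi)&=0,&&\text{in }\R^{N+1}_+,\\
-\lim_{t\to 0}t^{1-2s}\partial_t\widetilde\Phi&=\kappa_s\lambda|x|^{-2s}\widetilde\Phi,&&\text{on }\R^N.
\end{bvp}
\]
Passing to the limit in $\mathcal{N}$ along the blow-up sequence forces $\widetilde\Phi$ to be $\gamma$-homogeneous; separating variables $\widetilde\Phi(z)=|z|^\gamma\psi(z/|z|)$ reduces to the angular eigenvalue problem \eqref{eq:angular_eigen} with eigenvalue $\mu=\gamma^2+(N-2s)\gamma$. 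Since $\widetilde\Phi\ge 0$, Lemma \ref{lemma:mu_psi_1}(2) gives $\psi=c\psi_1$ with $c>0$, and since $\gamma>-(N-2s)/2$ the admissible root is $\gamma=a_\lambda$ as defined in \eqref{eq:6}.

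From $\mathcal{N}(r)\to a_\lambda$ I would next deduce the two-sided bound \eqref{eq:regul_1}. The monotonicity gives $\mathcal{N}(r)\le a_\lambda+o(1)$, hence (by the identity $(r^{-2a_\lambda}H(r))'=2r^{-2a_\lambda-1}(\mathcal{N}(r)-a_\lambda)H(r)$ coming from the formula for $H'$) the function $r\mapsto r^{-2a_\lambda}H(r)$ converges to a finite positive limit as $r\to 0^+$ and is bounded between two positive constants on $(0,R]$ for some $R\le R_0$. Combining this with a uniform $L^\infty$--$L^2$ estimate on half-annuli $B_{2r}^+\setminus B_{r/2}^+$ for nonnegative solutions of the degenerate equation (which follows from the Moser/De Giorgi theory for $A_2$-weights, see the references in Section \ref{sec:preliminaries}) and with the weak Harnack inequality to bound $\Phi$ from below in terms of its $L^2$-mean, one concludes $C_1^{-1}|z|^{a_\lambda}\le\Phi(z)\le C_1|z|^{a_\lambda}$ on $B_R^+$.

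Finally, for the refined statement \eqref{eq:regul_2} when $0\le\lambda<\gamma_H$, I would upgrade the blow-up convergence to be uniform on $\overline{S_1^+}\cap\{t=0\}$ (via the Hölder boundary estimates recalled in the Appendix) and invoke Lemma \ref{lemma:mu_psi_1}(3)--(4): in this range $\psi_1$ has a positive constant trace on $\S^{N-1}$, so $\widetilde\Phi(0,x)=C_2|x|^{a_\lambda}$ for a constant $C_2>0$, and independence of $C_2$ of the blow-up subsequence (granted by uniqueness of the limit of $r^{-2a_\lambda}H(r)$) yields the full limit $|x|^{-a_\lambda}\Phi(0,x)\to C_2$. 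The main obstacle I expect is the derivation of the monotonicity formula with the singular perturbation $q$: controlling the error terms generated by $q$ in the Pohozaev identity—in particular showing their integrability in $r$ uniformly in $\Phi$—requires careful use of the decay assumption on $q$ together with the Hardy inequality \eqref{eq:hardy_ext} to absorb the $\lambda|x|^{-2s}$ term.
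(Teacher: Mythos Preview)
Your argument is essentially correct, but it is far more elaborate than what the paper does. The paper's proof is two lines: it invokes \cite[Theorem 4.1]{Fall2014}, which directly asserts that, under exactly these hypotheses, there exists $C>0$ such that
\[
\tau^{-a_\lambda}\Phi(\tau\theta)\to C\psi_1(\theta)\qquad\text{in }C^{0,\beta}(\overline{\S^N_+})\text{ as }\tau\to 0^+.
\]
From this $C^{0,\beta}$ convergence and the fact that $\min_{\overline{\S^N_+}}\psi_1>0$ (established in \eqref{eq:5}), the two-sided bound \eqref{eq:regul_1} is immediate, and \eqref{eq:regul_2} follows from the trace behaviour of $\psi_1$ recorded in Lemma \ref{lemma:mu_psi_1}(3)--(4).

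What you have sketched is, in effect, the \emph{proof} of the cited Fall--Felli theorem: the Almgren frequency, the Pohozaev identity, the control of the $q$-perturbation, the blow-up and its classification via the angular problem are precisely the ingredients of that result. So your route is valid and self-contained, whereas the paper outsources the heavy lifting to the literature. The advantage of the paper's approach is economy; the advantage of yours is that it makes the mechanism transparent and would survive in settings where an off-the-shelf asymptotic theorem is unavailable. One minor point: your passage from $\mathcal N(r)\to a_\lambda$ and $r^{-2a_\lambda}H(r)\to\text{const}>0$ to the pointwise two-sided bound via Moser/Harnack on annuli is workable but slightly indirect; the uniform $C^{0,\beta}$ convergence on $\overline{\S^N_+}$ (which is what the cited theorem actually delivers, and which your H\"older boundary estimates would also give once the blow-up is shown to converge in $C^{0,\beta}$) yields \eqref{eq:regul_1} in one stroke.
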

\begin{proof}
 Since $\Phi\geq 0$ a.e., $\Phi\nequiv 0$, from \cite[Theorem 4.1]{Fall2014} we know that there exists $C>0$ such that
 \begin{equation}\label{eq:7}
  \tau^{-a_\lambda}\Phi(\tau\theta)\to C\psi_1(\theta)\qquad \text{in }C^{0,\beta}(\overline{\S^N_+})\quad\text{as }\tau\to 0.
 \end{equation}
 Estimate \eqref{eq:regul_1} follows from the above convergence
and \eqref{eq:5}. 
Convergence \eqref{eq:regul_2} follows from \eqref{eq:7} and
statements (3--4)  of Lemma
\ref{lemma:mu_psi_1}.
\end{proof}

\begin{lemma}\label{lemma:regul_2}
  Let $R_0>0$, $\lambda<\gamma_H$ and let $\Phi\in\Dext$, $\Phi\geq 0$ a.e. in $\R^{N+1}_+$, $\Phi\nequiv 0$, be a weak solution of the following problem
 \begin{equation*}
  \begin{bvp}
    -\divergence (t^{1-2s}\nabla \Phi)&=0, && \text{in }\R^{N+1}_+\setminus B_{R_0}^+, \\
  -\lim_{t\to 0}t^{1-2s}\frac{\partial\Phi}{\partial t}&=\kappa_s(\lambda\abs{x}^{-2s}+q)\Phi, && \text{on }\R^N\setminus B_{R_0}',
  \end{bvp}
 \end{equation*}
 where $q\in C^1(\R^N\setminus B_{R_0}')$ is such that
 \begin{equation*}
  \abs{q(x)}+\abs{x\cdot\nabla q(x)} =O( \abs{x}^{-2s-\eps}) \qquad\text{as }\abs{x}\to +\infty,
 \end{equation*}
 for some $\eps>0$. Then there exist $C_3>0$ and $R\geq R_0$  such that
 \begin{equation*}
  \frac{1}{C_3}\abs{z}^{  -(N-2s)-a_\lambda}\leq \Phi(z)\leq C_3\abs{z}^{  -(N-2s)-a_\lambda} \qquad\text{for all }z\in \R^{N+1}_+\setminus B_R^+ .
 \end{equation*}
  Furthermore, if $0\leq \lambda<\gamma_H$, then there exists $C_4>0$ 
such that  
 \begin{equation*}
  \lim_{\abs{x}\to \infty}\abs{x}^{  N-2s+a_\lambda}\Phi(0,x)=C_4.
 \end{equation*}
\end{lemma}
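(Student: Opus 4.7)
The plan is to reduce this result at infinity to the already established behavior near the origin, Lemma \ref{lemma:regul_1}, by means of a Kelvin-type transform adapted to the degenerate extension problem. Precisely, for $z=(t,x)\in\overline{\R^{N+1}_+}\setminus\{0\}$ I would define
\[
\widetilde{\Phi}(z):=|z|^{-(N-2s)}\Phi\!\left(\frac{z}{|z|^2}\right),
\]
where $|z|^2=t^2+|x|^2$. A direct computation (which preserves the weight $t^{1-2s}$, since $t/|z|^2$ has the same sign as $t$ and one picks up a factor of $|z|^{-2(1-2s)}$ in the Jacobian that is exactly compensated when one rewrites the equation in divergence form) shows that $\widetilde\Phi$ weakly satisfies
\[
-\divergence\bigl(t^{1-2s}\nabla\widetilde\Phi\bigr)=0
\qquad\text{in }B_{1/R_0}^+,
\]
and that the conormal derivative transforms to
\[
-\lim_{t\to 0}t^{1-2s}\frac{\pa\widetilde\Phi}{\pa t}=\kappa_s\bigl(\lambda|x|^{-2s}+\widetilde{q}(x)\bigr)\widetilde\Phi
\qquad\text{on }B'_{1/R_0}\setminus\{0\},
\]
with $\widetilde{q}(x):=|x|^{-4s}q(x/|x|^2)$. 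The power $-2s$ in the potential is preserved because the Hardy-type weight $|x|^{-2s}$ is invariant (in the appropriate sense) under $x\mapsto x/|x|^2$; the remainder rescales as $|\widetilde{q}(x)|=|x|^{-4s}|q(x/|x|^2)|=O(|x|^{-4s}\cdot|x|^{2s+\eps})=O(|x|^{-2s+\eps})$ as $|x|\to0$, and an analogous estimate holds for $x\cdot\nabla\widetilde{q}$ by a chain-rule computation using the hypothesis on $x\cdot\nabla q$. Thus $\widetilde{q}$ fulfills the assumption of Lemma \ref{lemma:regul_1}.

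Next I would verify the regularity needed to apply Lemma \ref{lemma:regul_1}, namely $\widetilde{\Phi}\in H^1(B_r^+;t^{1-2s})$ for $r<1/R_0$. Since $\Phi\in\Dext$, the weighted Dirichlet energy is finite on $\R^{N+1}_+\setminus B_{R_0}^+$, and the Kelvin change of variables maps this region to $B_{1/R_0}^+\setminus\{0\}$ preserving the weighted Dirichlet integral (this is the standard conformal invariance of the extension problem). Control of $\int_{B_r^+}t^{1-2s}\widetilde\Phi^2\dxdt$ then follows from the weighted Hardy inequality in the half-space. Moreover $\widetilde\Phi\geq 0$ and $\widetilde\Phi\nequiv 0$.

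Applying Lemma \ref{lemma:regul_1} to $\widetilde\Phi$, I obtain a constant $C_1>0$ and $r>0$ such that
\[
\tfrac{1}{C_1}|z|^{a_\lambda}\leq \widetilde\Phi(z)\leq C_1 |z|^{a_\lambda}\qquad\text{for all }z\in B_r^+.
\]
Writing $w=z/|z|^2$ and using $|w|=|z|^{-1}$ and $\widetilde\Phi(z)=|w|^{N-2s}\Phi(w)$, these inequalities become
\[
\tfrac{1}{C_1}|w|^{-(N-2s)-a_\lambda}\leq \Phi(w)\leq C_1|w|^{-(N-2s)-a_\lambda},
\]
valid for $w\in\R^{N+1}_+\setminus B_R^+$ with $R=1/r\geq R_0$, which is the first claim with $C_3=C_1$. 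For the second claim, in the case $0\leq\lambda<\gamma_H$ the same transformation applied to \eqref{eq:regul_2} of Lemma \ref{lemma:regul_1} yields
\[
\lim_{|x|\to 0}|x|^{-a_\lambda}\widetilde\Phi(0,x)=C_2,
\]
and substituting $y=x/|x|^2$ and the relation $\widetilde\Phi(0,x)=|x|^{-(N-2s)}\Phi(0,y)=|y|^{N-2s}\Phi(0,y)$ gives
\[
\lim_{|y|\to\infty}|y|^{N-2s+a_\lambda}\Phi(0,y)=C_2=:C_4,
\]
as desired.

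The main obstacle is the careful verification of how the weighted equation and its Neumann-type boundary condition transform under the Kelvin inversion: one must check that the weight $t^{1-2s}$, the $|x|^{-2s}$ singularity, and the decay of $q$ all combine correctly so that $\widetilde\Phi$ solves an equation of exactly the form to which Lemma \ref{lemma:regul_1} applies (with the same $\lambda$ and the required control on the perturbation $\widetilde q$ and its radial derivative). Once this conformal-type calculation is done, the rest of the argument is a straightforward change of variables.
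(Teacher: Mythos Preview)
Your proof is correct and follows exactly the same approach as the paper: the paper's argument consists precisely of applying the Kelvin transform $\tilde\Phi(z)=|z|^{-(N-2s)}\Phi(z/|z|^2)$ (citing \cite[Proposition 2.6]{Fall2012} for the transformation of the equation) and then invoking Lemma \ref{lemma:regul_1}. You have simply written out in detail the verification that $\tilde q$ satisfies the hypotheses of Lemma \ref{lemma:regul_1} and the back-substitution, which the paper leaves implicit.
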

\begin{proof}
 The proof follows by considering the equation solved by the Kelvin
 transform of $\Phi$ 
\begin{equation}\label{eq:4kel}
\tilde{\Phi}(z):=\abs{z}^{-(N-2s)}\Phi\bigg(\frac{z}{\abs{z}^2}\bigg)
\end{equation}
(see \cite[Proposition 2.6]{Fall2012}) and applying
 Lemma \ref{lemma:regul_1}.
\end{proof}

\begin{lemma}\label{lemma:regul_3}
 Let $R_0>0$  and let $\Phi\in
 H^1(B_{R_0}^+;t^{1-2s})$, $\Phi\geq 0$ a.e. in $\R^{N+1}_+$, $\Phi\nequiv 0$, be a weak solution of the following problem
 \begin{equation*}
  \begin{bvp}
    -\divergence (t^{1-2s}\nabla \Phi)&=0, && \text{in }B_{R_0}^+, \\
  -\lim_{t\to 0}t^{1-2s}\frac{\partial\Phi}{\partial t}&=\kappa_s q\Phi, && \text{on }B_{R_0}',
  \end{bvp}
 \end{equation*}
 where $q\in C^1(B_{R_0}'\setminus\{0\})$ is such that
 \begin{equation*}
  \abs{q(x)}+\abs{x\cdot\nabla q(x)}  =O(\abs{x}^{-2s+\eps})\qquad\text{as }\abs{x}\to 0,
 \end{equation*}
 for some $\eps>0$. Then there exists $C_5>0$ such that
 \begin{equation*}
  \lim_{\abs{z}\to 0}\Phi(z)=\lim_{\abs{x}\to 0}\Phi(0,x)=C_5.
 \end{equation*}
\end{lemma}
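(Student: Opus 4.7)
The plan is to recognise Lemma \ref{lemma:regul_3} as the degenerate case $\lambda=0$ of Lemma \ref{lemma:regul_1} and then to upgrade the pointwise limit along $\{t=0\}$ to a full limit as $|z|\to 0$ in $\overline{\R^{N+1}_+}$, exploiting the fact that at $\lambda=0$ the angular profile is constant.

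First, I observe that the hypotheses on $\Phi$ and $q$ are exactly those of Lemma \ref{lemma:regul_1} with $\lambda=0<\gamma_H$. By Lemma \ref{lemma:mu_psi_1}(4), the constant function is a first eigenfunction of problem \eqref{eq:angular_eigen} for $\lambda=0$; hence $\mu_1(0)=0$ and we may take $\psi_1\equiv c$ on $\overline{\S^N_+}$ for some $c>0$ (compatibly with \eqref{eq:5}). Substituting into \eqref{eq:6} yields
\[
 a_0=-\frac{N-2s}{2}+\sqrt{\left(\frac{N-2s}{2}\right)^{\!2}+\mu_1(0)}=0.
\]
Hence the two-sided bound \eqref{eq:regul_1} specialises to $1/C_1\leq \Phi(z)\leq C_1$ in $B_R^+$, so $\Phi$ is uniformly bounded above and below by positive constants near the origin.

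To pin down the precise limit I invoke the uniform angular convergence used in the proof of Lemma \ref{lemma:regul_1}, namely \cite[Theorem 4.1]{Fall2014}, which asserts the existence of $C>0$ such that
\[
 \tau^{-a_\lambda}\Phi(\tau\theta)\to C\psi_1(\theta)\quad\text{in }C^{0,\beta}(\overline{\S^N_+})\text{ as }\tau\to 0^+.
\]
Plugging in $a_0=0$ and $\psi_1\equiv c$, this becomes
\[
 \sup_{\theta\in\overline{\S^N_+}}\abs{\Phi(\tau\theta)-Cc}\to 0\quad\text{as }\tau\to 0^+,
\]
so, writing each $z\in B_R^+\setminus\{0\}$ as $z=\abs{z}(z/\abs{z})$, one reads off $\lim_{|z|\to 0}\Phi(z)=C_5$ with $C_5:=Cc>0$; the restricted limit $\lim_{|x|\to 0}\Phi(0,x)=C_5$ follows by specialising to $\theta\in\S^{N-1}$.

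The proof presents no serious obstacle, since the quantitative work is already done in Lemma \ref{lemma:regul_1} (and ultimately in \cite{Fall2014}); the only point requiring care is that the convergence in $C^{0,\beta}(\overline{\S^N_+})$ — as opposed to a pointwise limit in one fixed direction — is precisely what is needed to convert the convergence of the trace on $\{t=0\}$ into a genuine full limit as $|z|\to 0$ throughout $\overline{\R^{N+1}_+}$.
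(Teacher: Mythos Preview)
Your proof is correct and coincides with the alternative derivation the paper itself points out: applying \cite[Theorem 4.1]{Fall2014} with $\lambda=0$ and using that $\psi_1$ is a positive constant on $\overline{\S^N_+}$ (Lemma \ref{lemma:mu_psi_1}(4)) so that $a_0=0$ and the $C^{0,\beta}(\overline{\S^N_+})$ convergence yields the full limit. The paper's primary argument is slightly different --- it invokes the H\"older regularity of Proposition \ref{prop:jlx} (the potential $q\Phi$ lies in $L^p(B_{R_0}')$ for some $p>N/2s$ thanks to the growth assumption on $q$) to obtain continuity of $\Phi$ up to the origin, and then the Hopf-type principle of Proposition \ref{prop:cabre_sire} to ensure $\Phi(0,0)>0$ --- but both routes are acknowledged in the paper and lead to the same conclusion.
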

\begin{proof}
  The thesis  is a direct consequence of the regularity result 
of \cite[Proposition 2.4]{Jin2014} (see Proposition \ref{prop:jlx} in
the Appendix) combined with the Hopf type
 principle in \cite[Proposition 4.11]{Cabre2014} (see Proposition
 \ref{prop:cabre_sire}). It can be also derived as a particular case
 of \cite[Theorem 4.1]{Fall2014} with $\lambda=0$, taking into account
 that, for $\lambda=0$, $\psi_1$ is a positive constant on
 $\S^N_+$, as observed in Lemma \ref{lemma:mu_psi_1}.
\end{proof}

\begin{lemma}\label{lemma:regul_4}
  Let $R_0>0$  and let $\Phi\in\Dext$, $\Phi\geq 0$ a.e. in $\R^{N+1}_+$, $\Phi\nequiv 0$, be a weak solution of the following problem
 \begin{equation*}
  \begin{bvp}
    -\divergence (t^{1-2s}\nabla \Phi)&=0, && \text{in }\R^{N+1}_+\setminus B_{R_0}^+, \\
  -\lim_{t\to 0}t^{1-2s}\frac{\partial\Phi}{\partial t}&=\kappa_s q\Phi, && \text{on }\R^N\setminus B_{R_0}',
  \end{bvp}
 \end{equation*}
 where $q\in C^1(\R^N\setminus B_{R_0}')$ is such that
 \begin{equation*}
  \abs{q(x)}+\abs{x\cdot\nabla q(x)} =O(\abs{x}^{-2s-\eps})\qquad\text{as }\abs{x}\to +\infty,
 \end{equation*}
 for some $\eps>0$. Then there exists $C_6>0$ such that
 \begin{equation*}
  \lim_{\abs{z}\to \infty}\abs{z}^{N-2s}\Phi(z)=\lim_{\abs{x}\to \infty}\abs{x}^{N-2s}\Phi(0,x)=C_6 
 \end{equation*}
\end{lemma}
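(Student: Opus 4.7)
The plan is to reduce the statement to Lemma \ref{lemma:regul_3} via the Kelvin transform, in precisely the same way that Lemma \ref{lemma:regul_2} is derived from Lemma \ref{lemma:regul_1}. Concretely, I would set
\[
\tilde{\Phi}(z):=|z|^{-(N-2s)}\Phi\!\left(\frac{z}{|z|^2}\right),
\]
exactly as in \eqref{eq:4kel}, and invoke the conformal invariance established in \cite[Proposition 2.6]{Fall2012}: because $\Phi$ is $L_s$-harmonic on $\R^{N+1}_+\setminus B^+_{R_0}$, the function $\tilde\Phi$ is $L_s$-harmonic on $B^+_{1/R_0}\setminus\{0\}$, and the Neumann-type boundary condition transforms into
\[
-\lim_{t\to 0}t^{1-2s}\frac{\partial\tilde\Phi}{\partial t}=\kappa_s\,\tilde q\,\tilde\Phi\quad\text{on }B'_{1/R_0}\setminus\{0\},
\qquad \tilde q(x)=|x|^{-4s}\,q\!\left(\frac{x}{|x|^2}\right).
\]
Since $\Phi\in\Dext$, the Kelvin transform preserves the space $\Dext$, and in particular $\tilde\Phi\in H^1(B_{1/R_0}^+;t^{1-2s})$; furthermore $\tilde\Phi\geq 0$ a.e. and $\tilde\Phi\nequiv 0$.

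Next I would check that $\tilde q$ satisfies the hypothesis of Lemma \ref{lemma:regul_3}. From $|q(y)|=O(|y|^{-2s-\eps})$ as $|y|\to+\infty$, substituting $y=x/|x|^2$ gives
\[
|\tilde q(x)|=|x|^{-4s}\,O(|x|^{2s+\eps})=O(|x|^{-2s+\eps})\quad\text{as }|x|\to 0^+.
\]
A straightforward chain-rule computation, using the analogous decay of $|y\cdot\nabla q(y)|$, yields the same estimate for $|x\cdot\nabla\tilde q(x)|$; this is the only mildly technical point, but it is a routine computation entirely analogous to the passage from Lemma \ref{lemma:regul_1} to Lemma \ref{lemma:regul_2}.

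With these verifications, Lemma \ref{lemma:regul_3} applies to $\tilde\Phi$ and yields a constant $C_5>0$ with
\[
\lim_{|z|\to 0}\tilde\Phi(z)=\lim_{|x|\to 0}\tilde\Phi(0,x)=C_5.
\]
Unwinding the Kelvin transform via the substitution $w=z/|z|^2$, so that $|z|^{-(N-2s)}=|w|^{N-2s}$, this is equivalent to
\[
\lim_{|w|\to\infty}|w|^{N-2s}\Phi(w)=\lim_{|x|\to\infty}|x|^{N-2s}\Phi(0,x)=C_5,
\]
and the conclusion follows with $C_6:=C_5$. The main (minor) obstacle is bookkeeping the transformation of $q$ and its radial derivative under inversion; everything else is immediate from the conformal covariance of the extension operator and from Lemma \ref{lemma:regul_3}.
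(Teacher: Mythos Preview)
Your proposal is correct and matches the paper's own proof exactly: the paper simply states that the result follows by considering the equation solved by the Kelvin transform \eqref{eq:4kel} of $\Phi$ and applying Lemma~\ref{lemma:regul_3}. Your write-up in fact supplies more detail than the paper does (the verification that $\tilde q$ satisfies the hypotheses of Lemma~\ref{lemma:regul_3}), but the strategy is identical.
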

\begin{proof}
   The proof follows by considering    the equation solved by the
 Kelvin transform of $\Phi$ given in \eqref{eq:4kel} and applying Lemma \ref{lemma:regul_3}.
\end{proof}

\section{Proof of Theorem \ref{Prop-2}}\label{sec:proof-theor-refpr}

\begin{proof}[Proof of Theorem \ref{Prop-2}]
First, assume $\sum_{i=1}^k\lambda_i^+<\ga_H$. By Hardy inequality \eqref{eq:hardy} we deduce that
\[
 Q_{\lambda_1,\dots,\lambda_k,a_1,\dots,a_k} (u)\geq \left( 1-\frac{\sum_{i=1}^k\lambda_i^+}{\ga_H} \right)\norm{u}_{\Ds}^2\quad\text{for all }u\in\Ds,
\]
thus implying that $Q_{\lambda_1,\dots,\lambda_k,a_1,\dots,a_k}$ is positive definite.

Now we assume that $\sum_{i=1}^k\lambda_i^+>\ga_H$. By optimality of the constant $\ga_H$ in Hardy inequality, it follows that there exists $\varphi\in C_c^\infty(\R^N)$ such that
\begin{equation}
 \norm{\varphi}_{\Ds}^2-\sum_{i=1}^k\lambda_i^+\int_{\R^N}\frac{\abs{\varphi}^2}{\abs{x}^{2s}}\dx<0.
\end{equation}
Let $\varphi_\rho(x):=\rho^{-\frac{N-2s}{2}}\varphi(x/\rho)$. Then, taking into account Lemma \ref{lemma:conv_hardy}, we have that
\begin{align*}
 \norm{\varphi_\rho}_{\Ds}^2-\sum_{i=1}^k\lambda_i^+\int_{\R^N}\frac{\abs{\varphi_\rho}^2}{\abs{x-a_i}^{2s}}\dx  &=\norm{\varphi}_{\Ds}^2-\sum_{i=1}^k\lambda_i^+\int_{\R^N}\frac{\abs{\varphi}^2}{\abs{x-a_i/\rho}^{2s}}\dx\\
 &\to \norm{\varphi}_{\Ds}^2-\sum_{i=1}^k\lambda_i^+\int_{\R^N}\frac{\abs{\varphi}^2}{\abs{x}^{2s}}\dx<0,
\end{align*}
as $\rho\to+\infty$. Therefore, there exists $\tilde{\rho}$ such that
\begin{equation}\label{eq:prop2_1}
 \norm{\psi}_{\Ds}^2-\sum_{i=1}^k\lambda_i^+\int_{\R^N}\frac{\abs{\psi}^2}{\abs{x-a_i}^{2s}}\dx<0,
\end{equation}
where $\psi:=\varphi_{\tilde{\rho}}$. Let $R>0$ be such that $\supp \psi\subset B'_R$. Then
\begin{equation}\label{eq:prop2_2}
 \int_{\R^N}\frac{\abs{\psi}^2}{\abs{x-a}^{2s}}\leq \frac{1}{(\abs{a}-R)^{2s}}\int_{B'_R}\abs{\psi}^2\dx
\end{equation}
for $\abs{a}$ sufficiently large. Hence, from \eqref{eq:prop2_1} and
\eqref{eq:prop2_2} it follows that 
\begin{multline*}
  Q_{\lambda_1,\dots,\lambda_k,a_1,\dots,a_k}(\psi)=\norm{\psi}_{\Ds}^2-\sum_{i=1}^k
\lambda_i^+\int_{\R^N}\frac{\abs{\psi}^2}{\abs{x-a_i}^{2s}}\dx+
\sum_{i=1}^k\lambda_i^-\int_{\R^N}\frac{\abs{\psi}^2}{\abs{x-a_i}^{2s}}\dx \\
  \leq
  \norm{\psi}_{\Ds}^2-\sum_{i=1}^k\lambda_i^+
\int_{\R^N}\frac{\abs{\psi}^2}{\abs{x-a_i}^{2s}}\dx+\sum_{i=1}^k\lambda_i^-
\frac{1}{(\abs{a_i}-R)^{2s}}\int_{B'_R}\abs{\psi}^2\dx<0
\end{multline*}
if the poles $a_i$'s, corresponding to negative $\lambda_i$'s, are sufficiently far from the origin. The proof is thereby complete.
\end{proof}

\begin{remark}\label{rmk:2_poles}
 We observe that, in the case of two poles (i.e. $k=2$), Theorem
 \ref{Prop-2} implies the sufficiency of condition \eqref{lambda-i}
 for the existence of a configuration of poles that makes the
 quadratic form $Q_{\lambda_1,\dots,\lambda_k,a_1,\dots,a_k}$ positive
 definite. Indeed, if $k=2$ condition \eqref{lambda-i} directly
 implies  \eqref{eq:2}.
\end{remark}

\section{A positivity criterion in the class \texorpdfstring{$\Theta$}{Theta}}\label{sec:criterion}

In this section, we provide the proof of Lemma \ref{Criterion}, that is a criterion for establishing positivity of Schr\"odinger operators with potentials in $\Theta$, in relation with existence of positive supersolutions, in the spirit of Allegretto-Piepenbrink theory.

We first prove the equivalent formulation of the infimum in
\eqref{eq:def_inf} stated in Lemma \ref{lemma:equiv_inf}.

\begin{proof}[Proof of Lemma \ref{lemma:equiv_inf}]
 Let's fix $\tilde{u}\in \Ds\setminus \{0\}$ and let's call $\tilde{U}\in \Dext$ its extension. Since
 \begin{equation*}
  \kappa_s\norm{\tilde{u}}_{\Ds}^2=\int_{\R^{N+1}_+}t^{1-2s}|\nabla \tilde{U}|^2 \dxdt,
 \end{equation*}
 where $\kappa_s$ is defined in \eqref{eq:kappa_s}, then
 \begin{equation*}
  \frac{ \norm{\tilde{u}}_{\Ds}^2-\int_{\R^N}V
    \tilde{u}^2\dx}{\norm{\tilde{u}}_{\Ds}^2 }=
\frac{\int_{\R^{N+1}_+}t^{1-2s}|\na \tilde{U}|^2\dxdt-\kappa_s\int_{\Rn}V|\Tr \tilde{U}|^2\dx}{\int_{\R^{N+1}_+}t^{1-2s}|\na \tilde{U}|^2\dxdt}.
 \end{equation*}
 Therefore
 \begin{equation*}
  \frac{ \norm{\tilde{u}}_{\Ds}^2-\int_{\R^N}V \abs{\tilde{u}}^2\dx}{\displaystyle\norm{\tilde{u}}_{\Ds}^2 }
  \geq \inf_{\substack{U \in \Dext \\ U\nequiv 0}}\frac{\int_{\R^{N+1}_+}t^{1-2s}|\na U|^2\dxdt-\kappa_s\int_{\Rn}V|\Tr U|^2\dx}{\int_{\R^{N+1}_+}t^{1-2s}|\na U|^2\dxdt}
 \end{equation*}
 and then we can pass to the inf also on the left-hand quotient.

On the other hand, from \eqref{eq:dir_princ}, we have that, for any $U \in \Dext\setminus\{0\}$
 \begin{equation*}
  \frac{\int_{\R^{N+1}_+}t^{1-2s}|\na U|^2\dxdt-\kappa_s\int_{\Rn}V|\Tr U|^2\dx}{\int_{\R^{N+1}_+}t^{1-2s}|\na U|^2\dxdt}\geq 1-\frac{\int_{\R^N}V \abs{u}^2\dx}{\norm{u}_{\Ds}^2}
\end{equation*}
where $u=\Tr U$. Taking the infimum to both sides concludes the proof. 
\end{proof}

Now we are able to provide the proof of the positivity criterion.

\begin{proof}[Proof of Lemma \ref{Criterion}]
	Let us first prove (I).  Let
	$U\in C_c^\infty (\overline{\R^{N+1}_+}\setminus
	\{(0,a_1),\dots,(0,a_k)\})\setminus \{ 0 \}$,
	$U\nequiv 0$ on $\R^N$. Note that $U^2/\Phi\in \Dext$ and $U^2/\Phi
	\geq 0$, hence we can choose $U^2/\Phi$ in \eqref{eq:1} as a test function.  
	Easy computations
	yield
	\begin{equation*}
		\int_{\R^{N+1}_+} t^{1-2s}\abs{\nabla U}^2\dxdt-\kappa_s\int_{\R^N} V\abs{\Tr U}^2\dx \geq \eps \kappa_s \int_{\R^{N}} \tilde{V}\abs{\Tr U}^2\dx
	\end{equation*}
	which, taking into account the hypothesis on $\tilde{V}$, implies that
	\begin{equation*}
		\kappa_s \int_{\R^N} V\abs{\Tr U}^2\dx \leq \frac{1}{1+\eps}\int_{\R^{N+1}_+} t^{1-2s}\abs{\nabla U}^2\dxdt.
	\end{equation*}
	Hence
	\begin{equation}\label{eq:inf_eps}
	\frac{\int_{\R^{N+1}_+} t^{1-2s}\abs{\nabla U}^2\dxdt-\kappa_s\int_{\R^N} V\abs{\Tr U}^2\dx }{ \int_{\R^{N+1}_+} t^{1-2s}\abs{\nabla U}^2\dxdt}\geq \frac{\eps}{1+\eps}.
	\end{equation}
	Therefore (I) follows by density of $C_c^\infty (\overline{\R^{N+1}_+}\setminus
	\{(0,a_1),\dots,(0,a_k)\})$ in $\Dext$  (see Lemma \ref{l:density}).
	
	Now we prove (II). First of all we notice that, thanks to H\"older's inequality, \eqref{eq:hardy_ext} and \eqref{eq:sobolev_ext}
	\begin{multline*}
		\kappa_s\int_{\R^N}\tilde{V}\abs{\Tr U}^2\dx \leq\kappa_s\int_{\R^N}\abs{V}\abs{\Tr U}^2\dx \\ \leq  \left[ \frac{1}{\gamma_H}\left(\sum_{i=1}^k\abs{\lambda_i}+\abs{\lambda_\infty}\right) +S^{-1}\norm{W}_{L^{N/2s}(\R^N)}\right]\int_{\R^{N+1}_+}t^{1-2s}\abs{\nabla U}^2 \dxdt 
	\end{multline*}
	for all $U\in \Dext$. If
	\begin{equation}\label{eq:12}
	0 <\eps < \frac{\mu(V)}{2} \left[ \frac{1}{\gamma_H}\left(\sum_{i=1}^k\abs{\lambda_i}+\abs{\lambda_\infty}\right) +S^{-1}\norm{W}_{L^{N/2s}(\R^N)}\right]^{-1},
      \end{equation}
	then
	\begin{equation}\label{eq:criterion_2}
	\int_{\R^{N+1}_+} t^{1-2s}\abs{\nabla U}^2\dxdt -\kappa_s\int_{\R^N}( V+\eps \tilde{V})\abs{\Tr U}^2\dx 
	\geq \frac{\mu(V)}{2}\int_{\R^{N+1}_+} t^{1-2s}\abs{\nabla U}^2\dxdt
	\end{equation}
	for all $U\in \Dext$.
	Hence, for any fixed $p\in L^{N/2s}(\R^N)\cap L^\infty (\R^N)$, H\"older continuous and positive, the infimum
	\begin{equation*}
		m_p=\inf_{\substack{U\in \Dext\\\Tr U\not\equiv 0 }}
		\frac{\displaystyle \int_{\R^{N+1}_+} t^{1-2s}\abs{\nabla
				U}^2\dxdt-\kappa_s\int_{\R^N}( V+\eps \tilde{V} )\abs{\Tr
				U}^2\dx }{\displaystyle \int_{\R^{N}} p \abs{\Tr
				U}^2\dx }
	\end{equation*}
	is nonnegative. Also $m_p$ is achieved by some function $\Phi
	\in \Dext \setminus\{0\}$, that (by evenness) can be
	chosen to be nonnegative: indeed, thanks to Hardy inequality \eqref{eq:hardy_ext} and \eqref{eq:criterion_2} it's easy to prove that the map
	\begin{equation*}
		\Dext \ni U\mapsto \int_{\R^{N+1}_+} t^{1-2s}\abs{\nabla U}^2\dxdt
		-\kappa_s\int_{\R^N}
		( V+\eps \tilde{V})\abs{\Tr U}^2\dx
	\end{equation*}
	is weakly lower semicontinuous (since its square root is an equivalent
		norm in $\Dext$), while Lemma \ref{lemma:compact_emb} yields the
	compactness of the trace map from $\Dext$ into $L^2(\R^N,p\dx)$.
	Moreover $\Phi$ satisfies in a weak sense
	\begin{equation}\label{pbm:criterion_1}
	\begin{bvp}
	-\divergence (t^{1-2s}\nabla \Phi)&=0, &&\text{in }\R^{N+1}_+ ,\\
	-\lim_{t\to 0^+} t^{1-2s}\frac{\partial \Phi}{\partial t}&= \kappa_s(V+\eps \tilde{V})\Tr \Phi + m_p p\Tr \Phi, &&\text{in }\R^N,
	\end{bvp}
	\end{equation}
	i.e. 
	\begin{equation*}
		\int_{\R^{N+1}_+}t^{1-2s}\na \Phi\cdot \na W\dxdt = \kappa_s \int_{\R^{N}}
		(V+\eps \tilde{V})\Tr\Phi\Tr W\dx+m_p\int_{\R^{N}}
		p\Tr\Phi\Tr W\dx
	\end{equation*}
	for all $W \in \Dext$.
	From \cite[Proposition
	2.6]{Jin2014} (see also Proposition \ref{prop:jlx} in the Appendix) we
	have that $\Phi$ is  locally H\"older
                continuous in $\overline{\R^{N+1}_+} \setminus
                \{(0,a_1), \dots, (0,a_k)\}$; in particular $\Phi\in
 C^{0}\left(\overline{\R^{N+1}_+} \setminus
		\{(0,a_1),\dots, (0,a_k)\}\right)$. 
	Moreover, the classical Strong Maximum
	Principle implies that $\Phi>0$ in $\R^{N+1}_+$; then, in the
        case when $V,\tilde{ V}$ are locally H\"older
                continuous in $\R^N\setminus\{a_1,\dots,a_k\}$, the Hopf type
	principle proved in \cite[Proposition 4.11]{Cabre2014} (which is
	recalled in the Proposition \ref{prop:cabre_sire} of the Appendix) ensures that
	$\Phi(0,x)>0$ for all $x\in \R^N \setminus \{a_1, \dots,
        a_k,\}$; we observe that assumption \eqref{eq:9} of
        Proposition \ref{prop:cabre_sire} is satisfied thanks to
        \cite[Lemma 4.5]{Cabre2014}, see Lemma \ref{l:cabre_sire}.
\end{proof}

\section{Upper and lower bounds for $\mu(V)$}\label{sec:shattering}

In this section we prove bounds from above and from below (in Lemma \ref{Lem-1} and \ref{Lem-2}, respectively) for the quantity $\mu(V)$.

\begin{lemma}\label{Lem-1}
For any $V(x)=\sum_{i=1}^{k}\frac{\la_i \chi_{B'(a_i,r_i)}(x)}{|x-a_i|^{2s}}+\frac{\la_\infty \chi_{\Rn \setminus B'_R}(x)}{|x|^{2s}}+W(x) \in \Theta,$ there holds:
\begin{itemize}
\item [(i)] $\mu(V) \leq 1;$
\item [(ii)] if $\max_{i=1,\dots,k,\infty}\la_i>0,$ then $\mu(V) \leq 1-\frac{1}{\ga_H}\max_{i=1,\dots,k,\infty}\la_i.$	
\end{itemize}		
\end{lemma}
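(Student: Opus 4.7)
The plan is to establish both inequalities by producing explicit test functions $u\in\Ds$ and estimating the Rayleigh quotient $\bigl(\|u\|_\Ds^2-\int_{\R^N}Vu^2\dx\bigr)/\|u\|_\Ds^2$.

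For (i), I would fix $u_0\in C_c^\infty(\R^N)\setminus\{0\}$ and choose a sequence $y_n\in\R^N$ with $\abs{y_n}\to+\infty$, setting $u_n(x):=u_0(x-y_n)$. Translation-invariance of $\norm{\cdot}_\Ds$ gives $\norm{u_n}_\Ds=\norm{u_0}_\Ds$. I then claim $\int_{\R^N}Vu_n^2\dx\to 0$. For $n$ large, $\supp u_n$ is disjoint from every $B'(a_i,r_i)$, annihilating the finite-pole contributions. The outer term reduces to $\lambda_\infty\int u_0(y)^2/\abs{y+y_n}^{2s}\dy$, which is $O(\abs{y_n}^{-2s})$ since $\abs{y+y_n}\geq\abs{y_n}/2$ uniformly on $\supp u_0$. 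Finally, H\"older gives
\begin{equation*}
\abs{\textstyle\int_{\R^N}Wu_n^2\dx}\leq \norm{u_0^2}_{L^{N/(N-2s)}(\R^N)}\norm{W}_{L^{N/2s}(\supp u_0+y_n)}\to 0,
\end{equation*}
since $W\in L^{N/2s}(\R^N)$ and the shifted supports escape to infinity (by dominated convergence applied to $\mathbf{1}_{\supp u_0+y_n}\abs{W}^{N/2s}$, dominated by the $L^1$ function $\abs{W}^{N/2s}$). Substituting into the Rayleigh quotient yields $\mu(V)\leq 1$.

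For (ii), set $\lambda_\star:=\max\{\lambda_1,\dots,\lambda_k,\lambda_\infty\}>0$. Given $\delta>0$, the optimality of $\ga_H$ in \eqref{eq:hardy} furnishes $\varphi\in C_c^\infty(\R^N\setminus\{0\})$ with $\int_{\R^N}\varphi^2/\abs{x}^{2s}\dx>(\ga_H^{-1}-\delta)\norm{\varphi}_\Ds^2$. The critical rescaling $\varphi_\rho(x):=\rho^{-(N-2s)/2}\varphi(x/\rho)$ leaves both $\norm{\varphi_\rho}_\Ds$ and $\int\varphi_\rho^2/\abs{x}^{2s}\dx$ invariant. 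Two cases arise. If $\lambda_\star=\lambda_{i_0}$ for some $i_0\in\{1,\dots,k\}$, I set $u_\rho(x):=\varphi_\rho(x-a_{i_0})$ and let $\rho\to 0^+$; for $\rho$ small, $\supp u_\rho\subset B'(a_{i_0},r_{i_0})$, so by the disjointness convention only the $i_0$-th singular term of $V$ contributes, while the remainder satisfies $\abs{\int W u_\rho^2\dx}\leq \norm{W}_{L^\infty(\R^N)}\rho^{2s}\norm{\varphi}_{L^2(\R^N)}^2\to 0$. If instead $\lambda_\star=\lambda_\infty$, the same rescaling centered at the origin with $\rho\to+\infty$ pushes $\supp\varphi_\rho$ into $\R^N\setminus B'_R$ and away from all $a_i$; now $\int W\varphi_\rho^2\dx=\rho^{2s}\int W(\rho y)\varphi(y)^2\dy$, and H\"older combined with $\norm{W(\rho\,\cdot)}_{L^{N/2s}(\supp\varphi)}=o(\rho^{-2s})$ (since $W\in L^{N/2s}$ and $\rho\supp\varphi$ drifts to infinity) drives the $W$ contribution to zero. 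In both cases the Rayleigh quotient tends to $1-\lambda_\star(\ga_H^{-1}-\delta)$; letting $\delta\to 0$ completes (ii).

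The main obstacle is the uniform treatment of the $W$ remainder across the two distinct asymptotic regimes: $W\in L^\infty$ is needed in the concentrating regime $\rho\to 0^+$, whereas $W\in L^{N/2s}$, together with the translation/dilation identities, is required in the regimes where mass escapes to infinity. This is precisely why the class $\Theta$ demands $W$ in the intersection $L^{N/2s}(\R^N)\cap L^\infty(\R^N)$. Everything else is a routine exploitation of the scaling invariance of $\norm{\cdot}_\Ds$ and of the Hardy quotient, combined with the disjointness normalization that isolates a single singular term on the support of each test function.
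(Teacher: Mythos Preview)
Your proof is correct and follows essentially the same strategy as the paper: produce scale-invariant test functions that isolate one singular term (or none) and show the remaining contributions vanish. There are two minor differences worth noting. For (i), the paper concentrates at a regular point $P\notin\{a_1,\dots,a_k\}$ via $u_\rho(x)=\rho^{-(N-2s)/2}u((x-P)/\rho)$ with $\rho\to 0^+$ and estimates $\int V u_\rho^2$ in one stroke by $\|V\|_{L^{N/2s}(\supp u_\rho)}\|u\|_{L^{2^*_s}}^2$, whereas you translate to infinity and treat each piece of $V$ separately; both arguments are equally elementary. For (ii), the paper controls the $W$-remainder in \emph{both} regimes via H\"older--Sobolev, $|\int W\varphi_\rho^2|\le \|W\|_{L^{N/2s}(\supp\varphi_\rho)}\|\varphi\|_{L^{2^*_s}}^2\to 0$, so your remark that $W\in L^\infty$ is \emph{needed} in the concentrating regime is not quite accurate---$L^{N/2s}$ alone suffices here. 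This does not affect the validity of your argument, only the closing commentary.
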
	

\begin{proof}
Let us fix
  $u \in C_c^{\infty}(\Rn)$, $u\not\equiv 0$ and
  $P\in\R^N\setminus\{a_1,\dots,a_k\}$. For every $\rho>0$ we define
  $u_{\rho}(x):=\rho^{-\frac{(N-2s)}{2}}u(\frac{x-P}{\rho})$ and we
   notice that, by scaling properties,
	\begin{equation}\label{eq:shatter_1}
          \norm{u_\rho}_{\Ds}=\norm{u}_{\Ds}\quad\text{and}
\quad\norm{u_\rho}_{L^{2^*_s}(\R^N)}=\norm{u}_{L^{2^*_s}(\R^N)}.
	\end{equation}
	Moreover, since $\supp(u_\rho)=P+\rho \supp(u)$, we have that
        $a_1,\dots,a_k\not\in \supp(u_\rho)$ for $\rho>$ sufficiently small, hence
	\begin{equation}\label{eq:shatter_2}
		V\in L^{N/2s}(\supp(u_\rho)).
	\end{equation}
	Therefore, from the definition of $\mu(V)$, thanks also to
        \eqref{eq:shatter_1}, \eqref{eq:shatter_2}, H\"older
        inequality, and \eqref{eq:sobolev}, we deduce that
	\begin{align*}
          \mu(V)&\leq
          1-\frac{\int_{\R^N}V\abs{u_\rho}^2\dx}{\norm{u_\rho}_{\Ds}^2}
\leq 1+\frac{\int_{\supp(u_\rho)}\abs{V}\abs{u_\rho}^2\dx}{\norm{u}_{\Ds}^2}\\
&          \leq
          1+\frac{\norm{V}_{L^{N/2s}(\supp(u_\rho))}\norm{u}^2_{L^{2^*_s}(\R^N)}}{\norm{u}_{\Ds}^2}
\leq 1+S^{-1}\norm{V}_{L^{N/2s}(\supp(u_\rho))}=
          1+o(1),
        \end{align*}
	as $\rho\to 0^+$. By density we may conclude the first part of the proof.

Now let us assume $\max_{i=1, \dots,k,\infty}\lambda_i>0$ and let us
first consider the case $\max_{i=1, \dots,k,\infty}\lambda_i=\lambda_j$ for a
certain $j=1,\dots,k$. From optimality of the best constant in Hardy
inequality \eqref{eq:hardy} and from the density of
$C_c^\infty(\R^N \setminus \{a_1, \dots, a_k,0\})$ in
$\mathcal{D}^{s,2}(\R^N)$ (see Lemma \ref{l:density}), we have that, for any $\eps>0$, there exists
$\varphi\in C_c^\infty(\R^N \setminus \{a_1, \dots, a_k,0\})$ such that
\begin{equation}\label{eq:shatter_3}
	\norm{\varphi}_{\Ds}^2<(\gamma_H+\eps)\int_{\R^N}\frac{\abs{\varphi}^2}{\abs{x}^{2s}}\dx.
\end{equation}
Now, for any $\rho>0$ we define $\varphi_\rho(x):=\rho^{-\frac{(N-2s)}{2}}\varphi(\frac{x-a_j}{\rho})$. From the definition of $\mu(V)$ and from \eqref{eq:shatter_1} we deduce that
\begin{equation}\label{eq:shatter_4}
	  \mu(V) \leq 1-\frac{\int_{\R^{N}}V\abs{\varphi_\rho}^2\dx}{\norm{\varphi}_{\Ds}^2}.
\end{equation}
On the other hand, we can split the numerator as
\begin{align*}
	\int_{\R^{N}}V\abs{\varphi_\rho}^2\dx&=\la_j \int_{B'(a_j,r_j)}|x-a_j|^{-2s}\abs{\varphi_\rho}^2\dx+\sum_{i\neq j}\la_i \int_{B'(a_i,r_i)}|x-a_i|^{-2s}\abs{\varphi_\rho}^2\dx\\
	&+\la_\infty \int_{\Rn \setminus B'_{R}}|x|^{-2s}\abs{\varphi_\rho}^2\dx+\int_{\R^N}W\abs{\varphi_\rho}^2\dx.
\end{align*}
From H\"older inequality and \eqref{eq:shatter_1} we have that
\begin{equation}\label{eq:shatter_5}
	 \left| \int_{\Rn}W\varphi_\rho^2\dx\right| \leq \norm{W}_{L^{N/2s}(\supp(\varphi_\rho))}\norm{\varphi}_{{L^{2^*_s}}}^2 \to 0, \quad\text{as }\rho\to 0^+,
\end{equation}
while, just by a change of variable
\begin{equation}\label{eq:shatter_6}
	\int_{B'(a_j,r_j)}|x-a_j|^{-2s}\abs{\varphi_\rho}^2\dx=\int_{B'_{r_j/\rho}}|x|^{-2s}\abs{\varphi}^2\dx\to \int_{\R^{N}} |x|^{-2s}\abs{\varphi}^2\dx,
\end{equation}
as $\rho\to 0^+$. Moreover $\supp(\varphi_\rho)=a_j+\rho\supp(\varphi)$, and therefore, thanks to \eqref{eq:shatter_5} and \eqref{eq:shatter_6}, we have that, as $\rho\to 0^+$,
\begin{equation}\label{eq:shatter_7}
\int_{\R^{N}}V\abs{\varphi_\rho}^2\dx=\la_j \int_{\R^N}|x|^{-2s}\abs{\varphi}^2\dx+o(1).
\end{equation}
Hence, combining \eqref{eq:shatter_4} with \eqref{eq:shatter_7} and \eqref{eq:shatter_3}, we obtain that
\[
	\mu(V)\leq 1-\lambda_j(\gamma_H+\eps)^{-1},
\]
for all $\eps>0$, which implies that $\mu(V)\leq
1-\lambda_j/\gamma_H$. Finally, let us assume $\max_{i=1,
  \dots,k,\infty}\lambda_i=\lambda_\infty$. Letting
$\varphi_\rho(x):=\rho^{-\frac{(N-2s)}{2}}\varphi(x/\rho )$, we
observe that $\varphi_\rho\to 0$ uniformly, as $\rho\to +\infty$. So,
arguing as before, one can similarly obtain that
$\mu(V) \leq 1-\frac{\la_\infty}{\ga_H}$.
The proof is thereby complete.
\end{proof}

The following result provides the positivity in the case of
potentials with subcritical masses supported in sufficiently small
neighbourhoods of the poles. 
In the following we fix two cut-off functions
$\zeta,\tilde\zeta:\R^N\to \R$ such that $\zeta,\tilde\zeta\in
C^\infty(\R^N)$, $0\leq \zeta(x)\leq 1$, $0\leq \tilde\zeta(x)\leq 1$,
and 
\begin{align*}
&\zeta(x)=1\quad\text{for $|x|\leq\frac12$},
\quad \zeta(x)=0\quad\text{for $|x|\geq1$},\\
&\tilde \zeta(x)=0\quad\text{for $|x|\leq1$},
\quad \tilde\zeta(x)=1\quad\text{for $|x|\geq2$}.
\end{align*}
\begin{lemma}\label{Lem-2}
	Let $\{a_1,a_2,\dots,a_k\}\subset B_R'$, $a_i\neq a_j$
        for $i\neq j$,  and $\lambda_1,\lambda_2,\dots,\lambda_k,\lambda_\infty\in\R$
        be such that $m:=\max_{i=1, \dots,k,\infty}\la_i<\gamma_H$.  
 For any $0<h<1-\frac{m}{\ga_H}$, there exists $\de=\de(h)>0$ such that 
	\begin{equation*}
	\mu\left(\sum_{i=1}^{k}\frac{\la_i
            \zeta(\frac{x-a_i}\delta)}{|x-a_i|^{2s}}+\frac{\la_\infty
\tilde\zeta(\frac xR)}{|x|^{2s}} \right) 
	 \geq 
	\begin{cases}
	1-\frac{m}{\ga_H}-h,& \text{if } m>0\\
	1,&\text{if } m \leq 0.
	\end{cases}
	\end{equation*}
\end{lemma}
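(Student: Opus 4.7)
\textit{Plan.} If $m\le 0$, every $\lambda_j\le 0$, so the potential is pointwise non-positive and $\kappa_s\int V|\Tr U|^2\le 0$ for every $U\in\Dext$; Lemma~\ref{lemma:equiv_inf} then immediately yields $\mu(V)\ge 1$. I thus focus on $m>0$, in which case, by Lemma~\ref{lemma:equiv_inf}, the task reduces to establishing
\[
\kappa_s\int_{\R^N}V_\delta|\Tr U|^2\,dx \le \Big(\tfrac{m}{\gamma_H}+h\Big)\int_{\R^{N+1}_+}t^{1-2s}|\nabla U|^2\,dz
\]
for every $U\in\Dext$, where $V_\delta$ denotes the potential in the statement and $\delta=\delta(h)$ is to be chosen small.

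The approach will be an IMS-type localization argument. I would introduce smooth cut-offs $\eta_1,\dots,\eta_k,\eta_\infty\in C^\infty(\overline{\R^{N+1}_+})$ with $0\le\eta_j\le 1$ and pairwise disjoint supports, chosen so that each $\eta_j\equiv 1$ on the support of the corresponding summand $V_j$ of $V_\delta$. Such a choice is possible for $\delta$ small since the poles are distinct, finitely many and strictly contained in $B'_R$, while the summand at infinity is supported in $\{|x|\ge R\}$. The crucial feature is a \emph{logarithmic} radial profile for the pole cut-offs, $\eta_i(z)=\phi(\log|z-(0,a_i)|/\log\delta)$ with smooth $\phi$, yielding the gradient estimate $|\nabla \eta_i|^2\le C/(|z-(0,a_i)|^2|\log\delta|^2)$ on the transition annulus; an analogous logarithmic profile in $|z|$ is used for $\eta_\infty$.

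For each index $j$ with $\lambda_j>0$ one has $V_j|\Tr U|^2 = V_j|\Tr(\eta_j U)|^2$ since $\eta_j\equiv 1$ on $\supp V_j$, and applying the translated fractional Hardy inequality \eqref{eq:hardy_ext} to $\eta_j U$ gives
\[
\kappa_s\lambda_j\int V_j|\Tr U|^2\le \frac{\lambda_j}{\gamma_H}\int t^{1-2s}|\nabla(\eta_j U)|^2\,dz,
\]
while contributions with $\lambda_j\le 0$ are non-positive and can be discarded. Expanding $|\nabla(\eta_j U)|^2\le (1+\varepsilon)\eta_j^2|\nabla U|^2+C_\varepsilon|\nabla \eta_j|^2 U^2$ and using that disjointness of supports forces $\sum_j\lambda_j^+\eta_j^2\le m$ pointwise, summation over $j$ produces
\[
\kappa_s\int V_\delta|\Tr U|^2\le \frac{(1+\varepsilon)m}{\gamma_H}\int t^{1-2s}|\nabla U|^2\,dz+\frac{C_\varepsilon m}{\gamma_H}\sum_j\int t^{1-2s}|\nabla \eta_j|^2 U^2\,dz.
\]
The error integrals are then controlled via the logarithmic bound on $|\nabla \eta_j|$ together with the weighted bulk Hardy inequality $\int t^{1-2s}U^2/|z-z_j|^2\le C\int t^{1-2s}|\nabla U|^2$, which holds for $U\in\Dext$ because the weight $t^{1-2s}$ belongs to the Muckenhoupt class $A_2$: at each finite pole this gives an error of order $C/|\log\delta|^2$, which tends to zero as $\delta\to 0^+$. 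Taking $\varepsilon$ small and then $\delta$ small closes the estimate.

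The main obstacle I expect will be the handling of the cut-off $\eta_\infty$: unlike the pole cut-offs, its transition region cannot shrink with $\delta$, since its geometric scale is dictated by the requirement that the poles lie in $B'_R$ while $V_\infty$ is supported outside. To make the $\eta_\infty$-error arbitrarily small one has to optimize the inner scale of $\eta_\infty$ and the outer scale of the pole cut-offs so as to exploit the logarithmic slack offered by the strict inclusion $\{a_i\}\subset B'_R$, and to balance this against the choice of $\varepsilon$ in order to absorb the loss into the prescribed $h$.
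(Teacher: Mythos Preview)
Your IMS localization strategy is entirely different from the paper's proof, which never touches cut-offs in $\Dext$: instead it rescales the problem by $1/\delta$, constructs an explicit positive supersolution $\Phi=\sum_i\Phi_i+\eta\Phi_\infty$ as a combination of weighted-eigenvalue minimizers, verifies the supersolution inequality pointwise via the asymptotics of Lemmas~\ref{lemma:regul_1}--\ref{lemma:regul_4}, and then invokes the positivity criterion Lemma~\ref{Criterion}. Your route, if it worked, would be more elementary since it avoids that machinery.

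But there is a genuine gap exactly where you anticipate it, and your suggested resolution does not close it. The ``logarithmic slack offered by the strict inclusion $\{a_i\}\subset B'_R$'' is a \emph{fixed} finite ratio $R/\max_i|a_i|$. A radial cut-off $\eta_\infty$ that must vanish outside the pole balls and equal $1$ on $\{|z|\ge R\}$ has its transition confined to an annulus of bounded logarithmic width, so its contribution to $\sum_j\int t^{1-2s}|\nabla\eta_j|^2U^2$ is bounded below by a positive constant independent of $\delta$. Since $C_\varepsilon\to\infty$ as $\varepsilon\to0$, you cannot make $\varepsilon m/\gamma_H+C_\varepsilon m E_\infty/\gamma_H$ smaller than an arbitrary $h$.

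The repair is to abandon the separate $\eta_\infty$ with disjoint support and use a genuine partition of unity: keep $\eta_1,\dots,\eta_k$ and set $\eta_0:=\big(1-\sum_{i=1}^k\eta_i^2\big)^{1/2}$, associating the summand $V_\infty$ to $\eta_0$. Because $r_0<R-\max_i|a_i|$, one has $\eta_0\equiv1$ on $\supp V_\infty$, so $V_\infty|\Tr U|^2=V_\infty|\Tr(\eta_0U)|^2$ and Hardy applies. Now use $\lambda_j^+\le m$ together with the \emph{exact} IMS identity $\sum_{j=0}^k|\nabla(\eta_jU)|^2=|\nabla U|^2+\big(\sum_j|\nabla\eta_j|^2\big)U^2$, rather than your $(1+\varepsilon)$ split. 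The localization error $\sum_j|\nabla\eta_j|^2$ is then supported only in the annuli $\delta<|z-(0,a_i)|<r_0$ and is $O(|\log\delta|^{-2})$ by your logarithmic profile and the weighted bulk Hardy inequality; this does vanish as $\delta\to0$, and the argument closes.
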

\begin{proof}
 Let us assume that $m>0$, otherwise the statement is trivial. First, let us fix $0<\eps < \frac{\gamma_H}{ m}-1,$ so that
\begin{equation*}
  \tilde{\la}_i:=\la_i +\eps \la_i^+ < \gamma_H \quad\text{for all }i=1,\dots,k,\infty. 
\end{equation*}
In order to prove the statement, it is sufficient to find
$\delta=\delta(\eps)>0$ and 
$\Phi \in \Dext$ such that $\Phi\in
C^{0}(\overline{\R^{N+1}_+}\setminus
\{(0,a_1/\de),\dots,(0,a_k/\de)\})$, $\Phi> 0$ in
$\overline{\R^{N+1}_+}\setminus \{(0,a_1/\de),\dots,(0,a_k/\de)\}$, and
\begin{equation}\label{Eq-1}
\int_{\R^{N+1}_+}t^{1-2s}\na \Phi\cdot\na U\dxdt-\sum_{i=1}^k \kappa_s\int_{\R^{N}}V_i\Tr \Phi \Tr U\dx -\kappa_s\int_{\R^{N}}V_\infty \Tr \Phi \Tr U\dx \geq 0
\end{equation} 
for all $U \in \Dext$, $U \geq 0$ a.e., where
\[
 V_i(x)=\frac{\tilde{\la}_i
   \zeta\big(x-\frac{a_i}\delta\big)}{\abs{x-a_i/\de}^{2s}},\qquad
 V_\infty(x)=\frac{\tilde{\la}_\infty \tilde\zeta\big(\frac{\delta}R x\big)}{\abs{x}^{2s}}.
\]
Indeed, thanks to scaling properties in \eqref{Eq-1} and to Lemma
\ref{Criterion}, \eqref{Eq-1} implies that 
\[
 \mu \left( \sum_{i=1}^k \frac{\la_i 
 \zeta(\frac{x-a_i}\delta)}{\abs{x-a_i}^{2s}}+\frac{\la_\infty 
\tilde\zeta(\frac xR)}{\abs{x}^{2s}} \right) \geq\frac{\eps}{1+\eps},
\]
so that,  letting 
$h:=1-\frac{m}{\gamma_H}-\frac{\eps}{\eps+1}$, 
we obtain the result. Hence, we seek for some $\Phi$ positive and
continuous in $\overline{\R^{N+1}_+}\setminus
\{(0,a_1/\de),\dots,(0,a_k/\de)\}$ satisfying \eqref{Eq-1}. Let us set, for some $0<\tau<1$,
\[
  p_i(x):=p\left(x-\frac{a_i}{\de}\right)\quad \text{for }
  i=1,\dots,k,\quad 
p_\infty(x)=\left(\frac{\delta}R \right)^{2s}p\left(\frac{\delta x}R\right),
\]
where $p(x)=\frac{1}{\abs{x}^{2s-\tau}(1+\abs{x}^2)^{\tau}}$.
We observe that $p_i, p_\infty \in L^{N/{2s}}(\Rn)$. Therefore, thanks to Lemma \ref{lemma:compact_emb}, the weighted eigenvalue 
\[
 \mu_i=\inf_{\substack{\Phi \in \Dext \\  \Tr\Phi\nequiv 0 }}\frac{\int_{\R^{N+1}_+} t^{1-2s} \abs{\na \Phi}^2\dxdt-\kappa_s\int_{\Rn}V_i\abs{\Tr \Phi }^2\dx}{\int_{\R^{N}}p_i\abs{\Tr \Phi }^2\dx}
\]
is positive and attained by some nontrivial, nonnegative function $\Phi_i \in \Dext$ that weakly solves
\[
 \begin{bvp}
  -\divergence(t^{1-2s}\nabla\Phi_i)&=0, &&\text{in }\R^{N+1}_+ ,\\
  -\lim_{t\to 0}t^{1-2s}\frac{\partial \Phi_i}{\partial t}&=\left(
    \kappa_s V_i+\mu_i p_i  \right)\Tr\Phi_i ,&&\text{on }\R^N.
 \end{bvp}
\]
From the classical Strong Maximum Principle we deduce that $\Phi_i>0$
in $\R^{N+1}_+$, while Proposition \ref{prop:jlx} yields that 
$\Phi_i$ is locally H\"older
                continuous in  $\overline{\R^{N+1}_+}\setminus
\{(0,a_i/\de)\}$.
Moreover, from the Hopf type lemma in Proposition
\ref{prop:cabre_sire} (whose assumption \eqref{eq:9} is satisfied
thanks to  Lemma \ref{l:cabre_sire} outside $\{a_i/\de\}$) we deduce that $\Tr \Phi_i >0$ in
$\R^N\setminus \{a_i/\de\}$.  Similarly
\[
 \mu_\infty=\inf_{\substack{\Phi \in \Dext \\ \Tr\Phi\nequiv 0 }}\frac{\int_{\R^{N+1}_+}t^{1-2s}\abs{\na \Phi}^2\dxdt-\kappa_s\int_{\R^{N}}V_\infty\abs{\Tr \Phi }^2\dx}{\int_{\R^{N}}p_\infty\abs{\Tr \Phi }^2\dx}
\]
is positive and reached by some nontrivial, nonnegative function
$\Phi_\infty \in \Dext$ such that $\Phi_\infty$ 
is locally H\"older
                continuous in  $\overline{\R^{N+1}_+}$ and 
$\Phi_\infty>0$ in $\overline{\R^{N+1}_+}$.  Moreover, $\Phi_{\infty}$ weakly solves
\[
 \begin{bvp}
  -\divergence(t^{1-2s}\nabla\Phi_\infty)&=0, &&\text{in }\R^{N+1}_+ ,\\
  -\lim_{t\to 0}t^{1-2s}\frac{\partial \Phi_\infty}{\partial
    t}&=( \kappa_s V_\infty+\mu_\infty p_\infty  )\Tr\Phi_\infty ,&&\text{on }\R^N.
 \end{bvp}
\]
Lemmas \ref{lemma:regul_1}--\ref{lemma:regul_4} (and continuity of the
$\Phi_i$'s outside the poles)  imply that 
there exists $C_0>0$ (independent of $\de$) such that
\begin{align}
  \frac{1}{C_0}\abs{x-\frac{a_i}{\de}}^{a_{\tilde \lambda_i}} 
\leq &\Tr \Phi_i \leq C_0 \abs{x-\frac{a_i}{\de}}^{a_{\tilde \lambda_i}}, && \text{in }B'(a_i/\de,1), \label{eq:mu_estim_1}\\
  \frac{1}{C_0}\abs{x-\frac{a_i}{\de}}^{-(N-2s)} \leq &\Tr \Phi_i \leq C_0 \abs{x-\frac{a_i}{\de}}^{-(N-2s)}, &&\text{in }\Rn \setminus B'(a_i/\de,1), \label{eq:mu_estim_2}\\
  \frac{1}{C_0}\abs{\frac{\de x}{R}}^{-(N-2s)-a_{\tilde\lambda_\infty}} \leq &\Tr \Phi_\infty \leq C_0 
\abs{\frac{\de x}{R}}^{-(N-2s)-a_{\tilde\lambda_\infty}}, && \text{in }\Rn \setminus B'_{R/\de}, \label{eq:mu_estim_3}\\
  \frac{1}{C_0} \leq &\Tr \Phi_\infty \leq C_0,&& \text{in }B'_{R/\de}. \label{eq:mu_estim_4}
\end{align}
Let $\Phi:=\sum_{i=1}^k \Phi_i + \eta \Phi_\infty$, with $0<\eta<\inf \{\frac{\mu_i}{4C_0^2 \tilde{\la}_i}\colon i=1,\dots,k,~ \tilde{\la}_i>0 \}$.
Therefore,
\begin{align}
 & \int_{\R^{N+1}_+}t^{1-2s}\na \Phi\cdot \na U\dxdt-\sum_{i=1}^k\int_{\Rn}V_i\Tr \Phi \Tr U \dx-\int_{\R^{N}}V_\infty\Tr \Phi \Tr U \dx \nonumber \\
	=&\int_{\R^N}\bigg[ \sum_{i=1}^k\bigg(\mu_ip_i-V_\infty-\sum_{ j\neq i} V_j\bigg)\Tr \Phi_i +\eta\bigg( \mu_\infty p_\infty-\sum_{i=1}^k V_i \bigg)\Tr \Phi_\infty \bigg]\Tr U \dx \nonumber\\
	=:&\int_{\R^N}g(x)\Tr U(x) \dx \label{eq:mu_estim_5} 
\end{align}
for all $U\in\Dext$. Hereafter, let us assume $U\geq 0$ a.e. in $\R^{N+1}_+$. We will split the integral into three parts and prove that each of these is nonnegative. First, let us consider $x\in B'_{R/\de}\setminus\left( \cup_{i=1}^k B'(a_i/\de,1) \right)$: here $V_i=V_\infty=0$ and we have that
\[
 g(x)= \sum_{i=1}^k \mu_i p_i \Tr \Phi_i +\eta \mu_\infty p_\infty \Tr \Phi_\infty  \geq 0.
\]
Now let us take $n\in \{1,\dots,k\}$ and $x\in B'(a_n/\de,1)$, where $V_\infty=V_i=0$ for $i\neq n$. Then
\begin{gather*}
 g(x)=\sum_{i=1}^k\mu_i p_i  \Tr \Phi_i -V_n\sum_{ i\neq n}\Tr \Phi_i +\eta (\mu_\infty p_\infty -V_n)\Tr \Phi_\infty  \\
 \geq  \mu_n p_n \Tr \Phi_n - V_n\bigg(\sum_{\substack{i=1 \\i\neq n}}^k\Tr \Phi_i +\eta\Tr \Phi_\infty \bigg).
\end{gather*}
If $\tilde{\lambda}_n\leq 0$ this is clearly nonnegative; so let us assume $\tilde{\lambda}_n>0$. Thanks to \eqref{eq:mu_estim_1}, \eqref{eq:mu_estim_2} and \eqref{eq:mu_estim_4} we can estimate this quantity from below by
\begin{equation}\label{eq:mu_estim_6}
 \abs{x-\frac{a_n}{\de}}^{-2s}\bigg[ 
\frac{\mu_n}{2^\tau
  C_0}\abs{x-\frac{a_n}{\de}}^{\tau+a_{\tilde\lambda_n}}-
\tilde{\lambda}_n C_0\bigg( \sum_{i\neq n}\abs{x-\frac{a_i}{\de}}^{-(N-2s)}+\eta \bigg) \bigg].
\end{equation}
We observe that $\abs{x-a_n/\de}^{\tau+a_{\tilde\lambda_n}}\geq 1$,
since
$\tilde\lambda_n>0$ implies that $a_{\tilde\lambda_n}<0$ and we
can 
choose $\tau<-a_{\tilde\lambda_n}$.
Moreover it's not hard to prove that, for $i\neq n$,
\[
 \abs{x-\frac{a_i}{\de}}^{-(N-2s)}\leq \bigg( \frac{2}{\abs{a_n-a_i}} \bigg)^{N-2s}\delta^{N-2s}<\frac{\eta}{k-1},
\]
for $\de>0$ sufficiently small. Thanks to this and to the choice of
$\eta$ we have that 
the expression in \eqref{eq:mu_estim_6} and then 
$g(x)$ is nonnegative in $B'(a_n/\de,1)$ . Finally, if
$x\in\R^N\setminus B'_{R/\de}$, then
 the function $g$ in  \eqref{eq:mu_estim_5} becomes
\begin{equation}\label{eq:mu_estim_7}
  \sum_{i=1}^k( \mu_i p_i -V_\infty )\Tr \Phi_i +\eta\mu_\infty p_\infty \Tr \Phi_\infty  .
\end{equation}
Again, if $\tilde{\lambda}_\infty\leq 0$ this quantity is
nonnegative. If $\tilde{\lambda}_\infty>0$, thanks to
\eqref{eq:mu_estim_2} and \eqref{eq:mu_estim_3}, we have that the
function in \eqref{eq:mu_estim_7} is greater than or equal to
\begin{equation}\label{eq:mu_estim_8}
 \abs{x}^{-2s}\bigg[ -C_0\tilde{\lambda}_\infty
 \sum_{i=1}^k\abs{x-\frac{a_i}{\de}}^{-(N-2s)}+\frac{\eta\mu_\infty}{2^\tau
   C_0}\abs{\frac{\de x}{R}}^{-(N-2s)-a_{\tilde\lambda_\infty}+\tau} \bigg].
\end{equation}
Now, one can easily see that
\[
 \abs{x-\f{a_i}{\de}} \geq \bigg(1-\frac{a}{R}\bigg)\abs{x}\qquad \text{for all } x \in \Rn \setminus B'_{R/\de}, \quad \text{where } a=\max_{j=1,\dots,k}\abs{a_j},
\]
so that we can estimate \eqref{eq:mu_estim_8} from below obtaining
that, for all $x\in\R^N\setminus B'_{R/\de}$,
\begin{gather*}
g(x)\geq  \abs{x}^{-N}\bigg[ -C_0\tilde{\lambda}_\infty k \bigg( 1-\frac{a}{R} \bigg)^{-(N-2s)}+ \frac{\eta\mu_\infty}{2^\tau C_0}\abs{\frac{\de}{R}}^{-(N-2s)-a_{\tilde\lambda_\infty}+\tau} \abs{x}^{-a_{\tilde\lambda_\infty}+\tau}\bigg] \\
  \geq \abs{x}^{-N}\bigg[ -C_0\tilde{\lambda}_\infty k \bigg( 1-\frac{a}{R} \bigg)^{-(N-2s)}+ \frac{\eta\mu_\infty}{2^\tau C_0}\abs{\frac{\de}{R}}^{-(N-2s)} \bigg]\geq 0
\end{gather*}
for $\de>0$ sufficiently small, since
$a_{\tilde\lambda_\infty}<0$ if $\tilde\lambda_\infty>0$.  The proof is thereby complete.
\end{proof}

\section{Perturbation at infinity and at poles}\label{sec:perturbation}
In this section,  we investigate the persistence of the positivity when
the mass is increased at infinity (Theorem \ref{Theorem-a}) and at
poles (Theorem \ref{Theorem-a-pole}). 

In order to make use of Lemmas \ref{lemma:regul_1}--\ref{lemma:regul_4}, we may need to restrict the class $\Theta$ to some more regular potentials and to have a control on their growth at infinity.

For any $\de>0$, we define 
 \begin{equation}\label{eq:def_P_infty}
 \begin{aligned}
  \mathcal{P}_\infty^\de:=\bigg\{ f\colon \R^N\to\R \colon &f\in C^1(\R^N\setminus B_{R_\infty}')\text{ for some $R_\infty>0$}\\[-10pt]
  &\text{and}~\abs{f(x)}+\abs{x\cdot \nabla f(x)}=O(\abs{x}^{-2s-\de})~\text{as }\abs{x}\to +\infty\bigg\}.
  \end{aligned}
 \end{equation}
Moreover, in order to prove some intermediary, technical lemmas based
on the positivity criterion Lemma \ref{Criterion},  the need for even more regular potentials occasionally arises. So, let us introduce the class
\begin{equation}\label{eq:def_Theta_s}
	\Theta^*:=\left\{V\in\Theta\colon V\in C^1(\R^N\setminus\{a_1,\dots,a_k\}) \right\}.
\end{equation}
Then, we will recover the full generality of the class $\Theta$, thanks to an approximation procedure, which is based on the following lemma.
\begin{lemma}\label{lemma:approx_potentials}
	Let $V_1,V_2\in\Theta$ be such that $V_1-V_2\in L^{N/2s}(\R^N)$. Then
	\[
		\abs{\mu(V_2)-\mu(V_1)}\leq S^{-1}\norm{V_2-V_1}_{L^{N/2s}(\R^N)},
	\]
	where $S>0$ is the best constant in the Sobolev embedding \eqref{eq:sobolev}.
\end{lemma}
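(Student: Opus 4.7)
The plan is to show that the functional $\mu \colon \Theta \to \R$ is Lipschitz continuous with respect to the $L^{N/2s}$-metric on potentials, with explicit Lipschitz constant $S^{-1}$. This is an elementary Rayleigh-quotient comparison, relying only on the definition of $\mu$ in \eqref{eq:def_inf}, H\"older's inequality and the Sobolev inequality \eqref{eq:sobolev}.

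Concretely, I would start by fixing an arbitrary $u\in \Ds\setminus\{0\}$ and computing
\[
\frac{\|u\|_{\Ds}^2-\int_{\R^N}V_2 u^2\dx}{\|u\|_{\Ds}^2}-\frac{\|u\|_{\Ds}^2-\int_{\R^N}V_1 u^2\dx}{\|u\|_{\Ds}^2}=\frac{\int_{\R^N}(V_1-V_2)u^2\dx}{\|u\|_{\Ds}^2}.
\]
Since $V_1-V_2\in L^{N/2s}(\R^N)$ and $u^2\in L^{N/(N-2s)}(\R^N)$ (because $u\in L^{2^*_s}(\R^N)$ by the embedding following from \eqref{eq:sobolev}), H\"older's inequality with conjugate exponents $N/2s$ and $N/(N-2s)$ gives
\[
\left|\int_{\R^N}(V_1-V_2)u^2\dx\right|\leq \|V_1-V_2\|_{L^{N/2s}(\R^N)}\,\|u\|_{L^{2^*_s}(\R^N)}^2,
\]
and applying \eqref{eq:sobolev} to the last factor yields
\[
\left|\int_{\R^N}(V_1-V_2)u^2\dx\right|\leq S^{-1}\|V_1-V_2\|_{L^{N/2s}(\R^N)}\,\|u\|_{\Ds}^2.
\]

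Dividing by $\|u\|_{\Ds}^2$, this shows that the two Rayleigh quotients associated to $V_1$ and $V_2$ differ, for every $u\in\Ds\setminus\{0\}$, by at most $S^{-1}\|V_2-V_1\|_{L^{N/2s}(\R^N)}$. In particular
\[
\frac{\|u\|_{\Ds}^2-\int_{\R^N}V_2 u^2\dx}{\|u\|_{\Ds}^2}\leq \frac{\|u\|_{\Ds}^2-\int_{\R^N}V_1 u^2\dx}{\|u\|_{\Ds}^2}+S^{-1}\|V_2-V_1\|_{L^{N/2s}(\R^N)}.
\]
Taking the infimum over $u\in\Ds\setminus\{0\}$ on the right-hand side, and then on the left-hand side, produces
\[
\mu(V_2)\leq \mu(V_1)+S^{-1}\|V_2-V_1\|_{L^{N/2s}(\R^N)}.
\]
Interchanging the roles of $V_1$ and $V_2$ gives the reverse inequality, and combining the two concludes the proof.

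There is no real obstacle here: the statement is essentially a Lipschitz estimate for a functional defined as an infimum of Rayleigh quotients, and the only inputs used are the fractional Sobolev inequality \eqref{eq:sobolev} (which guarantees the denominator $\|u\|_{\Ds}^2$ controls $\|u\|_{L^{2^*_s}(\R^N)}^2$) together with one application of H\"older's inequality. The hypothesis $V_1-V_2\in L^{N/2s}(\R^N)$ is precisely what is needed to make the H\"older estimate meaningful, even though the individual potentials $V_1,V_2$ need not separately belong to $L^{N/2s}(\R^N)$ because of their Hardy-type singularities, which cancel in the difference.
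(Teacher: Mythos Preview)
Your proof is correct and follows essentially the same approach as the paper: compare the Rayleigh quotients for $V_1$ and $V_2$ via H\"older and the Sobolev inequality, take infima, and then swap the roles of $V_1$ and $V_2$. The only cosmetic difference is that the paper phrases the argument in the extended space $\Dext$ (using \eqref{eq:sobolev_ext}) while you work directly in $\Ds$ with \eqref{eq:sobolev}, which is equivalent by Lemma~\ref{lemma:equiv_inf}.
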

\begin{proof}
	From the definition of $\mu(V_2)$, H\"older inequality and
        \eqref{eq:sobolev_ext}, 
 for all $U\in \Dext$ we have that
	\begin{multline}
		\int_{\R^{N+1}_+}t^{1-2s}\abs{\nabla U}^2\dxdt-\kappa_s\int_{\R^N}V_1\abs{\Tr U}^2\dx \\
		\geq \left(\mu(V_2)-S^{-1}\norm{V_2-V_1}_{L^{N/2s}(\R^N)} \right)\int_{\R^{N+1}_+}t^{1-2s}\abs{\nabla U}^2\dxdt,
	\end{multline}
	which implies that
	\[
		\mu(V_1)\geq \mu(V_2)-S^{-1}\norm{V_2-V_1}_{L^{N/2s}(\R^N)}.
	\]
	Analogously one can prove that
	$\mu(V_2)\geq \mu(V_1)-S^{-1}\norm{V_2-V_1}_{L^{N/2s}(\R^N)}$,
	thus concluding the proof.
\end{proof}

\begin{lemma}\label{lemma-a}
Let $V\in\mathcal H$,  $a_1,\dots,a_k\in\R^N$,
and $R>0$ be such that 
\[
V\in C^1(\R^N\setminus\{a_1,\dots,a_k\}) \quad\text{and}\quad 
V(x)=\frac{\la_\infty}{|x|^{2s}}+W(x) \quad\text{in }\Rn \setminus B'_{R},
\]
where $\lambda_\infty<\gamma_H$ and $W\in\mathcal{P}_\infty^\de\cap
L^\infty(\R^N)$ for some $\de>0$.
  Assume that
$\mu(V)>0$ and let $\nu_\infty \in \R$ be such that
$\la_\infty+\nu_\infty<\ga_H$. Then there exist $\tilde{R}>R$ and 
$\Phi \in \Dext$ such that $\Phi$ is  locally H\"older
                continuous in $\overline{\R^{N+1}_+} \setminus \{(0,a_1),\dots,(0,a_k)\}$, $\Phi>0$ in
$\overline{\R^{N+1}_+} \setminus \{(0,a_1),\dots,(0,a_k)\}$,
and
\[
	\int_{\R^{N+1}_+}t^{1-2s}\na \Phi\cdot \na U\dxdt-\kappa_s\int_{\Rn}\left[V+ \frac{\nu_\infty}{|x|^{2s}}\chi_{\Rn \setminus B'_{\tilde{R}} }\right]\Tr \Phi  \Tr U \dx\geq 0,
\]
for all $U \in \Dext$ with $U\geq 0$ a.e.
\end{lemma}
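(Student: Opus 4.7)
The approach is to construct a positive supersolution $\Phi$ for the perturbed operator by combining the supersolution for $V$ provided by the Positivity Criterion with a barrier absorbing the extra mass $\nu_\infty |x|^{-2s}$ at infinity; this provides exactly the object appearing in the conclusion.

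First I would apply Lemma \ref{Criterion} (II) to $V$ with the admissible choice $\tilde V = |V|$, which is locally H\"older in $\R^N\setminus\{a_1,\dots,a_k\}$ since $V \in C^1$ there. Because $\mu(V) > 0$, this yields $\epsilon_0 > 0$ and a positive function $\Phi_0 \in \Dext$, locally H\"older continuous in $\overline{\R^{N+1}_+}\setminus\{(0,a_1),\dots,(0,a_k)\}$, satisfying
\[
\int_{\R^{N+1}_+} t^{1-2s}\nabla\Phi_0\cdot\nabla U\,\dx\,\dt \geq \kappa_s\int_{\R^N}(V+\epsilon_0|V|)\Tr\Phi_0\,\Tr U\,\dx
\]
for all nonnegative $U \in \Dext$. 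To handle the perturbation at infinity, I would pick $\alpha \in (0,(N-2s)/2)$ with $\Lambda(\alpha) > \lambda_\infty + \nu_\infty$, which is possible since $\lambda_\infty + \nu_\infty < \gamma_H$ and $\Lambda$ is a continuous bijection of $(0,(N-2s)/2)$ onto $(0,\gamma_H)$, and define $\Psi_\alpha$ as the Kelvin transform of the function $\Upsilon_\alpha$ from Lemma \ref{lemma:func_fall}. By the conformal invariance of the extension problem, $\Psi_\alpha$ is positive and H\"older continuous in $\overline{\R^{N+1}_+}\setminus\{0\}$, solves $-\divergence(t^{1-2s}\nabla\Psi_\alpha) = 0$ in $\R^{N+1}_+$ with boundary relation $-\lim_{t\to 0^+} t^{1-2s}\partial_t\Psi_\alpha = \kappa_s\Lambda(\alpha)|x|^{-2s}\Psi_\alpha$ on $\R^N\setminus\{0\}$, and its trace decays like $|x|^{-(N-2s)/2-\alpha}$ at infinity.

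I would then set $\Phi := \Phi_0 + c\,\eta\,\Psi_\alpha$ where $\eta \in C^\infty(\overline{\R^{N+1}_+})$ is a cutoff vanishing on $B^+_{\tilde R/2}$ and equal to $1$ outside $B^+_{\tilde R}$, with $c > 0$ and $\tilde R > R$ to be fixed. One verifies that $\Phi \in \Dext$ (since $\eta\Psi_\alpha$ is supported away from the singularity at $0$ and decays suitably at infinity), $\Phi > 0$ outside the poles, and checks the supersolution inequality by decomposing the relevant integrals into the regions $\{|z| < \tilde R/2\}$, where only $\Phi_0$ contributes and the inequality for $V$ already provides slack $\kappa_s\epsilon_0|V|\Tr\Phi_0\,\Tr U$; the region $\{|z| > \tilde R\}$, where $\eta\equiv 1$, so integrating by parts using the equation for $\Psi_\alpha$ produces the exactly matching term $\kappa_s\Lambda(\alpha)|x|^{-2s}\Psi_\alpha\,\Tr U$, which dominates $\kappa_s(\lambda_\infty+\nu_\infty)|x|^{-2s}\Psi_\alpha\,\Tr U$ by the strict choice $\Lambda(\alpha)>\lambda_\infty+\nu_\infty$; and the transition annulus $\{\tilde R/2 < |z| < \tilde R\}$, where $\nabla\eta\ne 0$ produces cross terms.

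The main obstacle is controlling the cross terms in the transition annulus, which arise as $\int t^{1-2s}(\Psi_\alpha\nabla U - U\nabla\Psi_\alpha)\cdot\nabla\eta$ after integrating by parts against the equation satisfied by $\Psi_\alpha$. These would be estimated by means of the explicit decay of $\Psi_\alpha$ at infinity together with the asymptotic behavior of $\Phi_0$ from Lemma \ref{lemma:regul_2}, and can be made arbitrarily small by taking $\tilde R$ sufficiently large and $c$ suitably small. In this way they are absorbed by the slack $\kappa_s\epsilon_0|V|\Tr\Phi_0\,\Tr U$ from the Positivity Criterion, together with the strict gap $\kappa_s(\Lambda(\alpha)-\lambda_\infty-\nu_\infty)|x|^{-2s}\Psi_\alpha\,\Tr U$ coming from the bulk at infinity.
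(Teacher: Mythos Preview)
There is a genuine gap in your treatment of the cross terms on the transition annulus. After your integration by parts, the error term is
\[
c\int_{\R^{N+1}_+} t^{1-2s}\big(\Psi_\alpha\nabla U - U\nabla\Psi_\alpha\big)\cdot\nabla\eta\,\dxdt,
\]
which is a \emph{bulk} integral on the half-annulus $\{\tilde R/2<|z|<\tilde R\}\subset\R^{N+1}_+$, whereas the slack you propose to absorb it, namely $\kappa_s\epsilon_0\int_{\R^N}|V|\,\Tr\Phi_0\,\Tr U\dx$, is a \emph{boundary} integral on $\R^N$. These cannot be compared uniformly over all nonnegative $U\in\Dext$: take $U$ to be a smooth nonnegative bump supported in the interior of the transition annulus, away from $\{t=0\}$. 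Then $\Tr U\equiv 0$, so every boundary term vanishes; since the $\Phi_0$ produced by Lemma~\ref{Criterion}(II) satisfies $-\divergence(t^{1-2s}\nabla\Phi_0)=0$ in $\R^{N+1}_+$ (see \eqref{pbm:criterion_1}), one has $\int t^{1-2s}\nabla\Phi_0\cdot\nabla U=0$ as well, and the desired inequality reduces to $\int t^{1-2s}F_1\,U\geq 0$ with $F_1=-t^{-(1-2s)}\divergence(t^{1-2s}\nabla(\eta\Psi_\alpha))$, which has no sign. Pushing $\tilde R$ to infinity or shrinking $c$ does not help: the inequality must hold for \emph{all} nonnegative test functions at once.

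The paper's proof addresses exactly this point by \emph{not} taking $\Phi_0$ from the Positivity Criterion. Instead, once $\Phi_1=\eta\tilde\Upsilon_\eps$ is fixed (your $c\,\eta\Psi_\alpha$), the compactly supported bulk error $F_1$ is computed explicitly, and the second piece $\Phi_2$ is constructed via Lax--Milgram as a weak solution of
\[
\begin{cases}
-\divergence(t^{1-2s}\nabla\Phi_2)=t^{1-2s}F_2 & \text{in }\R^{N+1}_+,\\
-\lim_{t\to 0^+}t^{1-2s}\partial_t\Phi_2=\kappa_s(V\,\Tr\Phi_2+H) & \text{on }\R^N,
\end{cases}
\]
with $F_2\geq 0$ smooth and compactly supported chosen so that $F_1+F_2\geq 0$, and $H\geq 0$ chosen to compensate the boundary errors on the annulus. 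The built-in bulk forcing $t^{1-2s}F_2$ is what absorbs the cutoff error; positivity of $\Phi_2$ then follows from a comparison argument using $\mu(V)>0$. Your construction lacks any mechanism to produce positive bulk forcing, which is why the annulus step fails.
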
	

\begin{proof}
By \eqref{eq:8} we can fix $\eps\in\big(0,\frac{N-2s}2\big)$  such that
\begin{equation}\label{eq:cond_eps}
	\Lambda(\eps)-\lambda_\infty>0\quad\text{and}\quad \Lambda(\eps)-\lambda_\infty-\nu_\infty>0.
\end{equation}
 Since $W\in\mathcal{P}_\infty^\de\cap L^\infty(\R^N)$, there
exists  $C_0>0$ such that 
\begin{equation}\label{eq:cond_W}
	W(x) \leq \frac{C_0}{|x|^{2s+\de}} \qquad\text{in } \Rn.
\end{equation}
Let $R_0\geq
\max\Big\{R,\frac{1}{2}\left[\frac{C_0}{\Lambda(\eps)-\lambda_\infty}\right]^{{1}/{\de}}
\Big\}$, so that
\begin{equation}\label{eq:cond_R_0}
	\Lambda(\eps)-\lambda_\infty-C_0(2R_0)^{-\de}\geq 0.
\end{equation}
From Lemma \ref{lemma:func_fall} there exists a positive,  locally
 H\"older continuous function
$\Upsilon_\eps: \overline{\R^{N+1}_+}\setminus\{0\} \to \R$ such that 
$\Upsilon_\eps\in\bigcap_{r>0}H^1(B_r^+;t^{1-2s})$  and
\begin{equation}
  \left\{\begin{aligned}
      -{\divergence}(t^{1-2s}\na \Upsilon_\eps), &=0 &&\text{in } \R^{N+1}_+, \\
      \Upsilon_\eps (0,x)&=|x|^{-\f{N-2s}{2}+\eps}, &&\text{on } \Rn,\\
      -\lim_{t \to 0^+}t^{1-2s}\frac{\pa \Upsilon_\eps}{\pa t},&=\kappa_s\Lambda(\eps)|x|^{-2s}\Tr \Upsilon_\eps &&\text{on } \Rn,\end{aligned}
\right.
\end{equation}
in a weak sense.
 Direct calculations (see e.g. \cite[Proposition
2.6]{Fall2012}) yield  that the Kelvin transform
\[
  \tilde{\Upsilon}_\eps(z)=\abs{z}^{-(N-2s)}\Upsilon_\eps(z/\abs{z}^2)                                                                                    
\]
 of $\Upsilon_\eps$ weakly satisfies
\begin{equation}\label{kelvin-trans}
\left\{\begin{aligned}
-{\divergence}(t^{1-2s}\na \tilde{\Upsilon}_\eps) &=0, && \text{in }\R^{N+1}_+\setminus\{0\}, \\
\tilde{\Upsilon}_\eps (0,x)&=|x|^{\f{2s-N}{2}-\eps}, &&\text{on } \Rn \setminus\{0\},\\
-\lim_{t \to 0^+}t^{1-2s}\frac{\pa \tilde{\Upsilon}_\eps}{\pa t}&=\kappa_s\Lambda(\eps)|x|^{-2s}\Tr \tilde{\Upsilon}_\eps, &&\text{on } \Rn \setminus\{0\},
\end{aligned}
\right.
\end{equation}
 $\tilde{\Upsilon}_\eps>0$ in $\overline{\R^{N+1}_+}\setminus\{0\}$
 and $\tilde{\Upsilon}_\eps$ 
 is   locally H\"older
                continuous in $\overline{\R^{N+1}_+} \setminus
                \{0\}$. Moreover we have that 
\begin{equation}\label{eq:10}
  \int_{\R^{N+1}_+ \setminus B_r^+} t^{1-2s} |\nabla\tilde{\Upsilon}_\eps|^2\dxdt+
  \int_{\R^{N+1}_+\setminus B_r^+} t^{1-2s}
  \frac{|\tilde{\Upsilon}_\eps|^2}{|x|^2+t^2}\dxdt<+\infty\quad\text{for
    all }r>0.
\end{equation}
Let $\eta\in C^\infty(\overline{\R^{N+1}_+})$ be a cut-off function
such that $\eta$ is radial, i.e. $\eta(z)=\eta(\abs{z})$, $\abs{\nabla
  \eta}\leq \frac2{R_0}$ in $\overline{\R^{N+1}_+}$, 
\[
	\eta(z):=\begin{cases}
	0, & \text{in }B^+_{R_0}\cup B_{R_0}' \\
	1, & \text{in }\left(\R^{N+1}_+\setminus B_{2R_0}^+\right)\cup\left(\R^N\setminus B_{2R_0}'  \right) ,
	\end{cases}
\]
and $\eta>0$ in $\overline{\R^{N+1}_+}\setminus \overline{B_{R_0}^+}$. We point out that
\begin{equation*}
	\frac{\partial \eta}{\partial t}(0,x)=0\quad \text{and}\quad
        \frac{1}{t}\abs{\frac{\partial \eta}{\partial
            t}(t,x)}=O(1)\quad\text{as }t\to 0 \text{ (uniformly in $x$)}.
\end{equation*}
We let $\Phi_1:=\eta \tilde{\Upsilon}_\eps$.
By its construction, $\Phi_1$ is continuous on the whole
$\overline{\R^{N+1}_+}$ and $\Phi_1>0$ in $\overline{\R^{N+1}_+}\setminus \overline{B_{R_0}^+}$,
whereas  \eqref{eq:10} implies that $\Phi_1\in \Dext$. 
 Moreover direct computations yield that $\Phi_1$  weakly solves
\begin{equation}\label{phi-1}
	\left\{\begin{aligned}
	-{\divergence}(t^{1-2s}\na \Phi_1) &= t^{1-2s}F_1 ,& &\text{in }\R^{N+1}_+, \\
	-\lim_{t \to 0^+}t^{1-2s}\frac{\pa \Phi_1}{\pa t}&=\kappa_s\Lambda(\eps)|x|^{-2s}\Tr \Phi_1, &&\text{on }\R^N,
	\end{aligned}\right.
\end{equation}
where 
\[
F_1:=(2s-1)\frac{1}{t}\frac{\partial \eta}{\partial
    t}\tilde{\Upsilon}_\eps-2\nabla \tilde{\Upsilon}_\eps\cdot\nabla
  \eta-\tilde{\Upsilon}_\eps \Delta \eta.
\]
We observe that   $F_1\in C^\infty(\R^{N+1}_+)$ and $\supp(F_1)
\subset \overline{B_{2R_0}^+ \setminus B_{R_0}^+}$.
Given 
\[
f_1(x):=\kappa_s\Lambda(\eps)\abs{x}^{-2s}\chi_{B_{2R_0}'\setminus
  B_{R_0}'}\Phi_1(0,x),
\]
 we can choose a smooth, compactly supported function $f_2\colon \R^N\to \R$ such that 
\begin{equation}\label{eq:f_1_f_2}
	f_1+f_2\geq 0\quad\text{in }\R^N,\qquad 	H:=f_2+\left[W+\lambda_\infty \abs{x}^{-2s} \right]\chi_{B_{2R_0}'\setminus B_{R_0}'} \Tr\Phi_1\geq 0 \quad\text{in }\R^N.
      \end{equation}
We also choose another smooth, positive, compactly supported function $F_2\colon \overline{\R^{N+1}_+}\to \R$ such that $F_1+F_2 \geq 0$ in $\overline{\R^{N+1}_+}$. Since $\mu(V)>0$ and $H \in L^{\frac{2N}{N+2s}}(\Rn)$, by Lax-Milgram Lemma there exists $\Phi_2 \in \Dext$ such that 
\begin{equation}\label{phi-2}
\left\{\begin{aligned}
-{\divergence}(t^{1-2s}\na \Phi_2) &=t^{1-2s}F_2 , &&\text{in }\R^{N+1}_+, \\
-\lim_{t \to 0^+}t^{1-2s}\frac{\pa \Phi_2}{\pa t}&=\kappa_s\left[V\Tr\Phi_2+H\right], &&\text{on }\R^N,
\end{aligned}
\right.
\end{equation}
holds in a weak sense. From Proposition \ref{prop:jlx} we
know that  $\Phi_2$ is locally H\"older
                continuous in $\overline{\R^{N+1}_+} \setminus
                \{(0,a_1),\dots,(0,a_k)\}$. 

In order to prove that $\Phi_2$ is strictly positive in
$\overline{\R^{N+1}_+}\setminus \{(0,a_1),\dots,(0,a_k)\}$, we compare
it with the unique weak solution $\Phi_3 \in \Dext$  to the problem 
\begin{equation}\label{eq:11}
\left\{\begin{aligned}
-{\divergence}(t^{1-2s}\na \Phi_3) &=0 , &&\text{in }\R^{N+1}_+, \\
-\lim_{t \to 0^+}t^{1-2s}\frac{\pa \Phi_3}{\pa t}&=\kappa_s\left[V\Tr\Phi_3+H\right], &&\text{on }\R^N,
\end{aligned}
\right.
\end{equation}
whose existence is again ensured by the Lax-Milgram Lemma.
The difference $\widetilde\Phi=\Phi_2-\Phi_3$ belongs to $\Dext$ and weakly solves 
\begin{equation*}
\left\{\begin{aligned}
-{\divergence}(t^{1-2s}\na \widetilde\Phi) &=t^{1-2s}F_2 , &&\text{in }\R^{N+1}_+, \\
-\lim_{t \to 0^+}t^{1-2s}\frac{\pa \widetilde\Phi}{\pa t}&=\kappa_s V\Tr \widetilde\Phi, &&\text{on }\R^N.
\end{aligned}
\right.
\end{equation*}
By directly testing the above equation with
$-\widetilde\Phi^-$, since $\mu(V)>0$ we obtain that
$\widetilde\Phi\geq 0$ in $\R^{N+1}_+$, i.e. $\Phi_2\geq\Phi_3$.
Furthermore, testing the equation for $\Phi_3$ with
$-\Phi_3^-$, we also obtain that $\Phi_3\geq0$  in $\R^{N+1}_+$.
The classical Strong Maximum
Principle, combined with Proposition \ref{prop:cabre_sire} (whose
assumption \eqref{eq:9} for \eqref{eq:11} is satisfied thanks to
the  assumption $V\in C^1(\R^N\setminus\{a_1,\dots,a_k\})$ and 
Lemma \ref{l:cabre_sire}), yields
$\Phi_3>0$ in
$\overline{\R^{N+1}_+}\setminus \{(0,a_1),\dots,(0,a_k)\}$
and hence 
\[
\Phi_2>0\quad\text{in}\quad 
\overline{\R^{N+1}_+}\setminus \{(0,a_1),\dots,(0,a_k)\}.
\]
 Finally,
from Lemma \ref{lemma:regul_2} and from the continuity of $\Phi_2$,
there exists $C_1>0$ such that
\begin{equation}\label{eq:phi_2_estim}
	\frac{1}{C_1}|x|^{-(N-2s)-a_{\lambda_\infty}} \leq
        \Phi_2(0,x) \leq C_1|x|^{-(N-2s)-a_{\lambda_\infty}}
\quad \text{in }\Rn \setminus B'_{2R_0}.
\end{equation}
Now we set $\Phi=\Phi_1+\Phi_2$. We immediately observe that
$\Phi\in\Dext$  is locally H\"older continuous and strictly
positive in
$\overline{\R^{N+1}_+} \setminus \{(0,a_1),\dots,(0,a_k)\}$. 
We claim that, for $\tilde{R}>0$ sufficiently large,
\begin{equation}\label{phi}
\int_{\R^{N+1}_+}t^{1-2s}\na \Phi\cdot \na U\dxdt-\kappa_s\int_{\Rn}\left[V+ \frac{\nu_\infty}{|x|^{2s}}\chi_{\Rn \setminus B'_{\tilde{R}} }\right]\Tr \Phi  \Tr U \dx\geq 0,
\end{equation}
for all $U \in \Dext$ with $U\geq 0$ a.e.

The function $\Phi$ weakly satisfies
\begin{equation}
\left\{\begin{aligned}
-{\divergence}(t^{1-2s}\na \Phi) &=t^{1-2s}(F_1+F_2) , &&\text{in }\R^{N+1}_+, \\
-\lim_{t \to 0^+}t^{1-2s}\frac{\pa \Phi}{\pa t}&=\kappa_s\left[ \Lambda(\eps)\abs{x}^{-2s}\Tr \Phi_1+ V\Tr\Phi_2+H\right], &&\text{on }\R^N.
\end{aligned}
\right.
\end{equation}
Hence, if $U\in\Dext$, $U\geq 0$ a.e.,
\begin{equation*}
	\begin{gathered}
			\int_{\R^{N+1}_+}t^{1-2s}\nabla \Phi\cdot\nabla U\dxdt-\kappa_s\int_{\Rn}\left[V+ \frac{\nu_\infty}{|x|^{2s}}\chi_{\Rn \setminus B'_{\tilde{R}} }\right]\Tr \Phi  \Tr U \dx \\
			\geq \int_{\R^N}\left[\kappa_s\frac{\Lambda(\eps)-\lambda_\infty-\nu_\infty }{ \abs{x}^{2s}}\chi_{\R^N\setminus B'_{\tilde{R}}}\Tr \Phi_1  
			+\kappa_s\frac{\Lambda(\eps) -\lambda_\infty -\abs{x}^{2s}W}{ \abs{x}^{2s}}\chi_{B'_{\tilde{R}}\setminus B'_{2R_0}}\Tr\Phi_1 \right. \\
			\left.-\kappa_s\left(W\Tr\Phi_1+\frac{\nu_{\infty}}{\abs{x}^{2s} }\Tr\Phi_2\right)\chi_{\R^N\setminus B'_{\tilde{R}}} +f_1+f_2\right]\Tr U\dx=:\int_{\R^N}F(x)\Tr U(x)\dx.
	\end{gathered}
\end{equation*}
If $x\in B'_{2R_0}$, then $F(x)=f_1(x)+f_2(x)\geq0$. If $x\in B'_{\tilde{R}}\setminus B'_{2R_0}$, then from \eqref{eq:f_1_f_2}, \eqref{eq:cond_W} and \eqref{eq:cond_R_0} 
\[
	F(x)\geq \kappa_s(\Lambda(\eps) -\lambda_\infty -\abs{x}^{2s}W)\abs{x}^{-2s} \Tr\Phi_1\geq \kappa_s(\Lambda(\eps)-\lambda_\infty-C_0(2R_0)^{-\de}) \abs{x}^{-2s}\Tr\Phi_1\geq 0.
\]
Finally, if $x\in \R^N\setminus B'_{\tilde{R}}$, then from the definition of $\Phi_1$, \eqref{eq:f_1_f_2}, \eqref{eq:phi_2_estim} and \eqref{eq:cond_W} we have that
\[
F(x)\geq \kappa_s(\Lambda(\eps)-\lambda_\infty-\nu_\infty)\abs{x}^{-\frac{N+2s}{2}-\eps}-\kappa_s C_0 \abs{x}^{-\frac{N+2s}{2}-\eps-\de}-\kappa_s C_1 {\nu_\infty\abs{x}^{-N-a_{\lambda_\infty}}}.
\]
Since the function $\lambda\mapsto\mu_1(\lambda)$ is strictly decreasing and
$\lambda_\infty<\Lambda(\eps)$, from \eqref{eq:4} it follows that
$\mu_1(\lambda_\infty)>\eps^2-\big(\frac{N-2s}{2}\big)^2$ which yields
$-N- a_{\lambda_\infty}<-\frac{N+2s}{2}-\eps$. Hence, if $\tilde{R}$
is sufficiently large, $F(x)\geq0$ for all $x\in  \R^N\setminus
B'_{\tilde{R}}$.  This concludes the proof.
\end{proof}	

Combining Lemma \ref{lemma-a} with the  positivity criterion Lemma
\ref{Criterion} and an approximation procedure based on Lemma
\ref{lemma:approx_potentials}, we prove the persistence of the
positivity under perturbations at infinity for potentials in the class $\Theta$.
\begin{theorem}\label{Theorem-a}
Let
\[
 V(x)=\sum_{i=1}^{k}\frac{\la_i \chi_{B'(a_i,r_i)}(x)}{|x-a_i|^{2s}}+\frac{\la_\infty\chi_{\Rn \setminus B'_R}(x)}{|x|^{2s}}+W(x) \in \Theta.
\]
Assume $\mu(V)>0$ and let $\nu_\infty \in \R$ be such that $\la_\infty+\nu_\infty<\ga_H$. Then there exists $\tilde{R}>R$ such that
\[
 \mu\left(V+\frac{\nu_\infty}{|x|^{2s}}\chi_{\R^N \setminus B'_{\tilde{R}}}\right)>0.
\]
\end{theorem}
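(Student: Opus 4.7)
The plan is to reduce to the more regular setting covered by Lemma \ref{lemma-a} by an $L^{N/2s}$ approximation, and then to upgrade the supersolution information provided by that lemma into a \emph{strict} positivity bound for $\mu$ via a convex combination of two Rayleigh inequalities.

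First, I would regularize $V$. Replacing every characteristic function $\chi_{B'(a_i,r_i)}$ and $\chi_{\R^N\setminus B'_R}$ in the representation of $V$ by a smooth cut-off (supported in a $1/n$-neighbourhood of the corresponding set), and replacing the bounded remainder $W$ by a compactly supported mollification $W_n$, yields a potential $V_n$ which (after absorbing the smoothing discrepancies into an updated $L^{N/2s}\cap L^\infty$ remainder) belongs to $\Theta$, is of class $C^1$ away from $\{a_1,\dots,a_k\}$, and coincides with $\lambda_\infty/|x|^{2s}$ outside a large ball. Thus $V_n$ satisfies all the hypotheses of Lemma \ref{lemma-a} (the infinity remainder is compactly supported and hence trivially in $\mathcal{P}_\infty^\delta \cap L^\infty$), while the approximation error $V-V_n$ (supported in thin annuli plus the region where $W$ is truncated) satisfies $\|V-V_n\|_{L^{N/2s}(\R^N)}\to0$. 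Lemma \ref{lemma:approx_potentials} then gives $\mu(V_n)\to\mu(V)>0$, so $\mu(V_n)>\mu(V)/2$ for all $n$ large.

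Next, fix $\sigma>0$ small enough that $\lambda_\infty+\nu_\infty+\sigma<\gamma_H$ still holds and, for $n$ as above, apply Lemma \ref{lemma-a} to $V_n$ with mass $\nu_\infty+\sigma$: this produces $\tilde R_n$ and a positive continuous supersolution $\Phi_n\in\Dext$ for the potential $V_n+(\nu_\infty+\sigma)|x|^{-2s}\chi_{\R^N\setminus B'_{\tilde R_n}}$. Testing the supersolution inequality against $U^2/\Phi_n$ for $U\in C_c^\infty(\overline{\R^{N+1}_+}\setminus\{(0,a_1),\dots,(0,a_k)\})$ (as in the proof of Lemma \ref{Criterion}(I)) and extending by density via Lemma \ref{l:density}, I obtain, for all $U\in\Dext$,
\begin{equation*}
\int_{\R^{N+1}_+} t^{1-2s}|\nabla U|^2\dxdt\geq\kappa_s\int_{\R^N}\Bigl[V_n+\tfrac{\nu_\infty+\sigma}{|x|^{2s}}\chi_{\R^N\setminus B'_{\tilde R_n}}\Bigr]|\Tr U|^2\dx.
\end{equation*}
Combining this with the defining inequality $\int t^{1-2s}|\nabla U|^2\dxdt \geq \kappa_s\int V_n|\Tr U|^2\dx+\mu(V_n)\int t^{1-2s}|\nabla U|^2\dxdt$ via a convex combination with weight $\theta\in(0,1)$ chosen so that $\theta(\nu_\infty+\sigma)\geq\nu_\infty$ (which is possible for any $\sigma>0$, regardless of the sign of $\nu_\infty$) and dropping the resulting nonnegative slack term $\kappa_s[\theta(\nu_\infty+\sigma)-\nu_\infty]\int|x|^{-2s}\chi_{\R^N\setminus B'_{\tilde R_n}}|\Tr U|^2\dx$, I deduce
\begin{equation*}
\mu\Bigl(V_n+\tfrac{\nu_\infty}{|x|^{2s}}\chi_{\R^N\setminus B'_{\tilde R_n}}\Bigr)\geq(1-\theta)\mu(V_n)>0.
\end{equation*}

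Finally, one more application of Lemma \ref{lemma:approx_potentials} (to the pair of potentials $V+\nu_\infty|x|^{-2s}\chi_{\R^N\setminus B'_{\tilde R_n}}$ and $V_n+\nu_\infty|x|^{-2s}\chi_{\R^N\setminus B'_{\tilde R_n}}$, whose difference equals $V-V_n$) transfers the positivity to the original $V$: for $n$ large enough one has $\mu(V+\nu_\infty|x|^{-2s}\chi_{\R^N\setminus B'_{\tilde R_n}})\geq(1-\theta)\mu(V_n)-S^{-1}\|V-V_n\|_{L^{N/2s}}>0$, and I would set $\tilde R:=\tilde R_n$ for any such $n$. The main technical difficulty lies in the regularization step: one must verify at once that the approximants $V_n$ lie in $\Theta$, possess the $C^1$ and tail regularity demanded by Lemma \ref{lemma-a}, and converge to $V$ in $L^{N/2s}$; the remaining algebra (choosing $\theta$, handling the sign of $\nu_\infty$) is elementary, and everything else is direct citation of the machinery already developed in earlier sections.
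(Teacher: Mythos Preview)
Your argument is correct and follows essentially the same architecture as the paper's proof: approximate $V$ in $L^{N/2s}$ by a potential of class $\Theta^*$ with the tail regularity required by Lemma~\ref{lemma-a}, apply that lemma to obtain a positive supersolution, extract a quantitative lower bound for $\mu$ via the $U^2/\Phi$ test-function computation, and transfer back through Lemma~\ref{lemma:approx_potentials}.

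The one genuine difference is in how you manufacture the ``room'' needed to turn the supersolution inequality into a \emph{strict} positivity of $\mu$. The paper builds the room multiplicatively: it fixes a small $\eps>0$, verifies $\mu((1+\eps)\hat V)>0$, applies Lemma~\ref{lemma-a} to $(1+\eps)\hat V$ with mass $(1+\eps)\nu_\infty$, and then invokes Lemma~\ref{Criterion}(I) as a black box (with $\tilde V$ equal to the perturbed potential itself) to conclude $\mu\ge\eps/(1+\eps)$. You instead build the room additively, only at infinity, by applying Lemma~\ref{lemma-a} with mass $\nu_\infty+\sigma$; you then recover the gap by a convex combination with the defining inequality for $\mu(V_n)$. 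Your route has the minor advantage that you never need the auxiliary constraints $(1+\eps)\lambda_i<\gamma_H$ for the finite poles, while the paper's route has the advantage of plugging directly into the already-proved Criterion without redoing the $U^2/\Phi$ computation by hand. Both are short and equivalent in strength.
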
	
\begin{proof}
  Since $V \in \Theta$ and $\mu(V)>0$,  arguing as in
  \eqref{eq:criterion_2} we have that, for $\eps$ chosen sufficiently
  small as in \eqref{eq:12},  $\mu(V+\eps
  V)>\frac{\mu(V)}{2}>0$.
  Moreover we can choose $\eps$ such that
  $\la_\infty +\nu_\infty+\eps(\la_\infty + \nu_\infty)<\ga_H$ 
  and 
$\lambda_i +\eps \lambda_i<\gamma_H$ for all
  $i=1,\dots,k,\infty$. Let $\sigma=\sigma(\epsilon)$ be such that
\begin{equation}\label{eq:pert_infty_1}
	0<\sigma<\min\{S\epsilon,S\mu(V)/2\}.
\end{equation} 
By density of $C^\infty_{\rm c}(\R^N)$ in
$L^{\frac{N}{2s}}(\R^N)$ there exists 
\[
	\hat{V}(x)=\sum_{i=1}^{k}\frac{\la_i \chi_{B'(a_i,r_i)}(x)}{|x-a_i|^{2s}}+\frac{\la_\infty\chi_{\Rn \setminus B'_R}(x)}{|x|^{2s}}+\hat{W}(x) \in \Theta^*
\]
such that $\hat{W}\in\mathcal{P}_\infty^\delta$ for some $\delta>0$ and
\begin{equation}\label{eq:pert_infty_2}
	\lVert\hat{V}-V\rVert_{L^{N/2s}(\R^N)}<\frac{\sigma}{1+\epsilon}.
\end{equation}
Then from Lemma \ref{lemma:approx_potentials}, taking into account \eqref{eq:pert_infty_1} and \eqref{eq:pert_infty_2}, we have that
\[
	\mu(\hat{V}+\eps\hat{V})\geq \mu(V+\eps V)-(1+\eps)S^{-1}\lVert\hat{V}-V\rVert_{L^{N/2s}(\R^N)}>0.
\]
Now, thanks to Lemma \ref{lemma-a}, there exists $\tilde{R}>R$ and a
function
 $\Phi \in \Dext$ such that  $\Phi$ is strictly positive and locally
 H\"older continuous
 in $\overline{\R^{N+1}_+} \setminus \{(0,a_1),\dots,(0,a_k)\}$  and 
\[
\int_{\R^{N+1}_+}t^{1-2s}\na \Phi\cdot \na U\dxdt-\kappa_s\int_{\Rn}\left[\hat{V}+\eps\hat{V}+ \frac{\nu_\infty+\eps \nu_\infty}{|x|^{2s}}\chi_{\Rn \setminus B'_{\tilde{R}} }\right]\Tr \Phi  \Tr U \dx\geq 0,
\]
for all $U \in \Dext$ with $U\geq 0$ a.e. Therefore Lemma \ref{Criterion} yields
\[
	\mu\left( \hat{V}+\frac{\nu_\infty}{\abs{x}^{2s}}\chi_{\R^{N} \setminus B'_{\tilde{ R}}} \right)\geq \frac{\eps}{1+\eps}.
\]
Finally, thanks to Lemma \ref{lemma:approx_potentials},
\eqref{eq:pert_infty_1} 
and \eqref{eq:pert_infty_2}, we have the estimate
\[
	\mu\left( V+\frac{\nu_\infty}{\abs{x}^{2s}}\chi_{\R^{N} \setminus B'_{\tilde{ R}}} \right)\geq \mu\left( \hat{V}+\frac{\nu_\infty}{\abs{x}^{2s}}\chi_{\R^{N} \setminus B'_{\tilde{ R}}} \right)-S^{-1}\lVert\hat{V}-V\rVert_{L^{N/2s}(\R^N)}>0
\]
which yields the conclusion.
\end{proof}	

 Swapping the singularity at a pole for a singularity at infinity
through the Kelvin transform, we obtain the analog of Theorem
\ref{Theorem-a} when perturbing the mass of a pole.

\begin{theorem}\label{Theorem-a-pole}
Let
\[
 V(x)=\sum_{i=1}^{k}\frac{\la_i \chi_{B'(a_i,r_i)}(x)}{|x-a_i|^{2s}}+\frac{\la_\infty\chi_{\Rn \setminus B'_R}(x)}{|x|^{2s}}+W(x) \in \Theta.
\]
Assume $\mu(V)>0$ and let $i_0\in\{1,\dots,k\}$ and $\nu \in \R$ be such that $\lambda_{i_0}+\nu<\ga_H$. Then there exists $\delta\in(0,r_{i_0})$ such that
\[
 \mu\left(V+\frac{\nu}{|x-a_{i_0}|^{2s}}\chi_{B'(a_{i_0},\delta)}\right)>0.
\]
\end{theorem}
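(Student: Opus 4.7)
The strategy is to reduce to Theorem \ref{Theorem-a} via the Kelvin transform, which exchanges the pole $a_{i_0}$ with infinity. By translation invariance of the $\Ds$-norm I may assume $a_{i_0}=0$. The Kelvin transform $(Ku)(y):=|y|^{-(N-2s)}u(y/|y|^2)$ is an involutive isometry of $\Ds$, and the change of variables $x=y/|y|^2$ (with Jacobian $|y|^{-2N}$) shows that for every $V\in\mathcal H$
\[
\int_{\R^N}V u^2\dx=\int_{\R^N}\tilde V\,(Ku)^2\dy,\qquad \tilde V(y):=|y|^{-4s}V(y/|y|^2),
\]
so $\mu(V)=\mu(\tilde V)$. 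Crucially, the Kelvin image of the sought perturbation $\frac{\nu}{|x|^{2s}}\chi_{B'_\delta}$ is precisely the infinity-type perturbation $\frac{\nu}{|y|^{2s}}\chi_{\R^N\setminus B'_{1/\delta}}$.

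Term by term, using the Kelvin identity $|y/|y|^2-a_j|=|y-a_j^*|\,|a_j|/|y|$ with $a_j^*:=a_j/|a_j|^2$, the potential $\tilde V$ has the $\Theta$-type structure with the roles of $0$ and $\infty$ interchanged: a pole at $y=0$ with mass $\lambda_\infty$, an infinity term with mass $\lambda_{i_0}$, poles at $a_j^*$ with masses $\lambda_j$ for $j\neq i_0$, and a remainder $\tilde W(y)=|y|^{-4s}W(y/|y|^2)$, which lies in $L^{N/(2s)}(\R^N)$ (a Kelvin-invariant norm) but a priori may fail to be in $L^\infty$. To bring $\tilde V$ into the class $\Theta$, I would approximate $W$ in $L^{N/(2s)}$ by functions $\hat W_n\in C_c^\infty(\R^N\setminus\{0\})$: letting $V_n$ be obtained from $V$ by replacing $W$ with $\hat W_n$, its Kelvin image $\tilde V_n\in\Theta$, since the Kelvin transform of $\hat W_n$ is compactly supported in an annulus of $\R^N\setminus\{0\}$ and is therefore bounded. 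By Lemma \ref{lemma:approx_potentials}, $\mu(\tilde V_n)\to \mu(\tilde V)=\mu(V)>0$, so $\mu(\tilde V_n)>0$ for $n$ large.

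Applying Theorem \ref{Theorem-a} to $\tilde V_n$ with $\nu_\infty=\nu$---the required condition $\lambda_\infty+\nu_\infty<\gamma_H$ becomes the standing hypothesis $\lambda_{i_0}+\nu<\gamma_H$---produces $\tilde R_n>0$ with
\[
\mu\Big(\tilde V_n+\tfrac{\nu}{|y|^{2s}}\chi_{\R^N\setminus B'_{\tilde R_n}}\Big)>0.
\]
Inverting the Kelvin transform yields $\mu\big(V_n+\tfrac{\nu}{|x|^{2s}}\chi_{B'_{1/\tilde R_n}}\big)>0$, and a final use of Lemma \ref{lemma:approx_potentials} (with the perturbation kept fixed, and using $\|V_n-V\|_{L^{N/(2s)}}=\|\hat W_n-W\|_{L^{N/(2s)}}\to 0$) transfers the positivity to $V$. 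The delicate point will be guaranteeing that $\tilde R_n$ can be chosen uniformly bounded in $n$, so that the limiting $\delta=1/\tilde R$ stays strictly positive; since $\tilde R$ in Theorem \ref{Theorem-a} a priori depends on the potential, this requires inspecting the supersolution construction of Lemma \ref{lemma-a} and verifying that its parameters are controlled uniformly in terms of the small quantity $\|\tilde V_n-\tilde V\|_{L^{N/(2s)}}$.
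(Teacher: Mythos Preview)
Your Kelvin-transform strategy is exactly the paper's. The execution, however, differs at the point you correctly flag as delicate, and the paper's resolution is not the one you propose. Rather than trying to bound $\tilde R_n$ uniformly along an approximating sequence, the paper builds in quantitative slack from the start: fix $\eps>0$ small (depending only on $\mu(V)$ and the masses) so that $\mu((1+\eps)V_1)>\mu(V)/2$, choose a \emph{single} approximation $V_2$ with $\|V_2-V_1\|_{L^{N/2s}}<\sigma/(1+\eps)$ where $\sigma<S\eps$, Kelvin-transform to $V_3$, and then apply Lemma~\ref{lemma-a} not to $V_3$ but to $(1+\eps)V_3$, with perturbation mass $(1+\eps)\nu$. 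The supersolution so produced, fed into the positivity criterion (Lemma~\ref{Criterion}), yields the \emph{quantitative} bound
\[
\mu\!\left(V_3+\tfrac{\nu}{|x|^{2s}}\chi_{\R^N\setminus B'_{\tilde R}}\right)\geq \frac{\eps}{1+\eps},
\]
which is large enough to absorb the $L^{N/2s}$ approximation error when one passes back to $V_1$ via Lemma~\ref{lemma:approx_potentials}. No uniformity over a sequence is needed, and the circular dependency you identify never arises.

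A second, smaller point: the paper invokes Lemma~\ref{lemma-a} directly rather than Theorem~\ref{Theorem-a}, because the Kelvin image need not literally lie in $\Theta$. After inverting, the correction near each new pole $a_j^*=a_j/|a_j|^2$ (for $j\neq i_0$) has the form
\[
\frac{\lambda_j}{|y-a_j^*|^{2s}}\left(\frac{1}{|a_j|^{2s}|y|^{2s}}-1\right)\sim C\,|y-a_j^*|^{1-2s},
\]
which for $s>\tfrac12$ is unbounded and so violates the $L^\infty$ requirement on the remainder in the definition of $\Theta$; your claim that $\tilde V_n\in\Theta$ is therefore not justified in general. Lemma~\ref{lemma-a} only asks that the potential lie in $\mathcal H$, be $C^1$ off the poles, and have an exact $\lambda_\infty|x|^{-2s}$ structure near infinity. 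The paper secures the latter by choosing the approximating $W_2$ to vanish near the origin, so that after Kelvin $V_3(y)=\lambda_{i_0}|y|^{-2s}$ for all large $|y|$, with no remainder at infinity at all.
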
	
Before proving Theorem \ref{Theorem-a-pole}, it is convenient to make
the following remark.
\begin{remark}\label{rem:translation_kelvin}
  \begin{enumerate}[(i)]
  \item By the invariance by translation of the norm $\|\cdot\|_{\Ds}$,
    we have that, if $V\in\mathcal H$, then, for any $a\in\R^N$, the translated
    potential $V_a:=V(\cdot+a)$ belongs to $\mathcal H$ and $\mu(V_a)=\mu(V)$. 
\item If $V\in \mathcal H$ and
  $V_K(x):=|x|^{-4s}V\big(\frac{x}{|x|^2}\big)$, then $V_K\in \mathcal
  H$ and $\mu(V_K)=\mu(V)$. To prove this statement, we observe that,
  by the change of variables $y=\frac{x}{|x|^2}$,
\[
\int_{\R^N}|V_K(x)|^2u^2(x)\dx=
\int_{\R^N}|V(y)|^2(\mathcal Ku)^2(y)\dy\quad\text{for any }u\in\Ds,
\]
where $(\mathcal Ku)(x):=|x|^{2s-N}u\big(\frac{x}{|x|^2}\big)$ is the
Kelvin transform of $u$. The claim  then follows from the fact that
$\mathcal K$ is an isometry on $\Ds$ (see \cite[Lemma 2.2]{Fall2012}).
  \end{enumerate}
\end{remark}

\begin{proof}[Proof of Theorem \ref{Theorem-a-pole}]
Let $V_1(x):=V(x+a_{i_0})$.  We have that 
\[
 V_1(x)=\frac{\lambda_{i_0} \chi_{B'_{r_{i_0}}}(x)}{|x|^{2s}}+
\sum_{i\neq i_0}\frac{\la_i
  \chi_{B'(a_i-a_{i_0},r_i)}(x)}{|x-(a_i-a_{i_0})|^{2s}}+
\frac{\la_\infty\chi_{\Rn \setminus B'_R}(x)}{|x|^{2s}}+W_1(x) \in \Theta
\]
and, in view
  of Remark \ref{rem:translation_kelvin} (i), $\mu(V_1)=\mu(V)>0$.
  Then  we can choose some $\eps$
  sufficiently small so that $\mu(V_1+\eps
  V_1)>\frac{\mu(V)}{2}>0$ (see 
  \eqref{eq:criterion_2}) and   $\lambda_{i_0} +\nu+\eps(\lambda_{i_0} +\nu)<\ga_H$,
$\lambda_i+\eps \lambda_i<\gamma_H$ for all
  $i=1,\dots,k,\infty$. Let $\sigma=\sigma(\epsilon)\in(0,\min\{S\epsilon,S\mu(V)/2\})$.
By density of $C^\infty_{\rm c}(\R^N\setminus\{a_1,\dots,a_k\})$ in
$L^{\frac{N}{2s}}(\R^N)$ there exists 
\[
	V_2(x)=
\frac{\lambda_{i_0} \chi_{B'_{r_{i_0}}}(x)}{|x|^{2s}}+
\sum_{i\neq i_0}\frac{\la_i
  \chi_{B'(a_i-a_{i_0},r_i)}(x)}{|x-(a_i-a_{i_0})|^{2s}}+
\frac{\la_\infty\chi_{\Rn \setminus B'_R}(x)}{|x|^{2s}}+W_2(x) \in \Theta^*
\]
such that $W_2\in L^{N/2s}(\R^N)\cap
  L^\infty(\R^N)$ vanishes in a neighbourhood of any pole
  and in a  neighbourhood of $\infty$ and
\begin{equation*}
	\|V_2-V_1\|_{L^{N/2s}(\R^N)}<\frac{\sigma}{1+\epsilon}.
\end{equation*}
Let $V_3(x):=|x|^{-4s}V_2\big(\frac{x}{|x|^2}\big)$. Then 
\[
V_3\in
C^1\bigg(\R^N\setminus\left\{0,\tfrac{a_i-a_{i_0}}{|a_i-a_{i_0}|^2}\right\}_{i\neq
  i_0}\bigg)
\]
and there exists $r>0$ such that 
\[
V_3(x)=\frac{\lambda_{i_0}}{|x|^{2s}}
\quad\text{in }\R^N\setminus B_r'.
\]
Moreover, from  Remark \ref{rem:translation_kelvin} (ii) and 
Lemma \ref{lemma:approx_potentials} it follows that $V_3\in\mathcal H$ and 
\begin{align*}
\mu(V_3+\eps V_3)&=\mu(V_2+\eps V_2)
\\
&\geq \mu(V_1+\eps V_1)-S^{-1}(1+\eps)\|V_1-V_2\|_{L^{N/2s}(\R^N)}
>\frac{\mu(V)}2-S^{-1}\sigma>0.
\end{align*}
From Lemma \ref{lemma-a} we deduce that  there exists $\tilde{R}>r$ and a
function
 $\Phi \in \Dext$ such that $\Phi$ is strictly positive and locally
 H\"older continuous
 in $\overline{\R^{N+1}_+} \setminus 
\left\{0,\tfrac{a_i-a_{i_0}}{|a_i-a_{i_0}|^2}\right\}_{i\neq
  i_0}$  and 
\[
\int_{\R^{N+1}_+}t^{1-2s}\na \Phi\cdot \na
U\dxdt-\kappa_s\int_{\Rn}\left[V_3+\eps V_3+ \frac{\nu+\eps \nu}{|x|^{2s}}\chi_{\Rn \setminus B'_{\tilde{R}} }\right]\Tr \Phi  \Tr U \dx\geq 0,
\]
for all $U \in \Dext$ with $U\geq 0$ a.e. Therefore Lemma \ref{Criterion} yields
\[
	\mu\left( V_3+\frac{\nu}{\abs{x}^{2s}}\chi_{\R^{N} \setminus B'_{\tilde{ R}}} \right)\geq \frac{\eps}{1+\eps}.
\]
From  Remark \ref{rem:translation_kelvin} (ii) we have that 
$\mu\left( V_3+\frac{\nu}{\abs{x}^{2s}}\chi_{\R^{N} \setminus
    B'_{\tilde{ R}}} \right)=\mu\left(
  V_2+\frac{\nu}{\abs{x}^{2s}}\chi_{B'_{1/\tilde{ R}}} \right)\geq
\frac{\eps}{1+\eps}$. Hence, letting $\delta=1/\tilde R$,  from
Remark \ref{rem:translation_kelvin} (i) and Lemma
\ref{lemma:approx_potentials} we deduce that 
\begin{align*}
\mu\bigg(V+&\frac{\nu}{|x-a_{i_0}|^{2s}}\chi_{B'(a_{i_0},\delta)}\bigg)=
\mu\left(V_1+\frac{\nu}{|x|^{2s}}\chi_{B'_\delta}\right)\\
&\geq 
\mu\left(V_2+\frac{\nu}{|x|^{2s}}\chi_{B'_\delta}\right)
-S^{-1}\lVert V_1-V_2\rVert_{L^{N/2s}(\R^N)}
\geq \frac\eps{1+\eps}-\frac{S^{-1}\sigma}{1+\eps}>0
\end{align*}
which yields the conclusion.
\end{proof}

\begin{corollary}\label{cor:pert-at-pole}
  Let
\[
 V(x)=\sum_{i=1}^{k}\frac{\la_i \chi_{B'(a_i,r_i)}(x)}{|x-a_i|^{2s}}+\frac{\la_\infty\chi_{\Rn \setminus B'_R}(x)}{|x|^{2s}}+W(x) \in \Theta
\]
be such that $\mu(V)>0$. Then there exists 
\begin{equation}\label{eq:15}
 \widetilde V(x)=\sum_{i=1}^{k}\frac{\tilde \la_i \chi_{B'(a_i,\tilde
     r_i)}(x)}{|x-a_i|^{2s}}+\frac{\la_\infty\chi_{\Rn \setminus
     B'_R}(x)}{|x|^{2s}}+\widetilde W(x) \in \Theta
\end{equation}
such that 
\[
 \widetilde V- V\in C^\infty(\R^N\setminus\{a_1,\dots,a_k\}),\quad
 \widetilde V\geq V,\quad
\mu( \widetilde V)>0,\quad\text{and}\quad 
\tilde \la_i>0\text{ for all }i=1,\dots,k.
\]
\end{corollary}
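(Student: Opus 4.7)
The plan is to raise the mass at every pole to a strictly positive value by iterating Theorem~\ref{Theorem-a-pole}, and then to replace the sharp cut-offs produced by that theorem with smooth ones, so as to obtain the required smoothness of $\widetilde V - V$ off the singular set $\{a_1,\dots,a_k\}$.

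First, since $\lambda_i<\gamma_H$ for every $i$, I would pick $\nu_i \in (\max\{0,-\lambda_i\},\,\gamma_H - \lambda_i)$ for $i=1,\dots,k$, so that $\tilde\lambda_i:=\lambda_i+\nu_i$ is both strictly positive and strictly less than $\gamma_H$. Setting $V^{(0)}:=V$, I then iterate on $i=1,\dots,k$: given that $V^{(i-1)}\in\Theta$ satisfies $\mu(V^{(i-1)})>0$ and has mass $\lambda_i<\gamma_H$ at the pole $a_i$, Theorem~\ref{Theorem-a-pole} produces some $\delta_i\in(0,r_i)$ such that
\[
V^{(i)}:=V^{(i-1)}+\frac{\nu_i\,\chi_{B'(a_i,\delta_i)}(x)}{|x-a_i|^{2s}}
\]
again satisfies $\mu(V^{(i)})>0$. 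A short bookkeeping check shows that $V^{(i)}$ can be recast in the $\Theta$-form with updated mass $\tilde\lambda_j$ at $a_j$ for $j\le i$ and mass $\lambda_j$ at $a_j$ for $j>i$ (keeping the original radii $r_j$): the mismatch between $\chi_{B'(a_j,\delta_j)}$ and $\chi_{B'(a_j,r_j)}$ contributes a term supported in the annulus $\{\delta_j\le|x-a_j|\le r_j\}$, which is bounded and compactly supported, hence can be absorbed into the remainder. After $k$ iterations, $\mu(V^{(k)})>0$.

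Next, I would smooth out the sharp cut-offs. For each $i$, pick $\eta_i\in C_c^\infty(\R^N)$ with $0\le\eta_i\le\chi_{B'(a_i,\delta_i)}$ pointwise and $\eta_i\equiv 1$ on some ball $B'(a_i,\rho_i)$ with $\rho_i\in(0,\delta_i)$, and set
\[
\widetilde V(x):=V(x)+\sum_{i=1}^k \frac{\nu_i\,\eta_i(x)}{|x-a_i|^{2s}}.
\]
Then $\widetilde V\ge V$ is immediate, and since each summand $\nu_i\eta_i/|x-a_i|^{2s}$ is smooth on $\R^N\setminus\{a_i\}$, the difference $\widetilde V-V$ is visibly of class $C^\infty(\R^N\setminus\{a_1,\dots,a_k\})$. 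Moreover the pointwise inequality $\widetilde V\le V^{(k)}$ combined with the elementary monotonicity $V'\le V''\Rightarrow \mu(V')\ge\mu(V'')$ of $\mu$ (read off the variational definition \eqref{eq:def_inf}) gives $\mu(\widetilde V)\ge\mu(V^{(k)})>0$. Finally, letting $\tilde r_i:=r_i$, $\tilde\lambda_i:=\lambda_i+\nu_i>0$, and
\[
\widetilde W:=W+\sum_{i=1}^k \nu_i\,\frac{\eta_i(x)-\chi_{B'(a_i,r_i)}(x)}{|x-a_i|^{2s}},
\]
the representation~\eqref{eq:15} holds; since $\eta_i-\chi_{B'(a_i,r_i)}$ is supported in $B'(a_i,r_i)\setminus B'(a_i,\rho_i)$, where $|x-a_i|\ge\rho_i>0$, each added term is bounded and compactly supported, so $\widetilde W\in L^\infty(\R^N)\cap L^{N/2s}(\R^N)$ and $\widetilde V\in\Theta$.

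I anticipate that the main obstacle will be the inductive recasting of each $V^{(i)}$ in the $\Theta$-form, i.e.\ verifying that absorbing the annular mismatch into the remainder keeps it in $L^\infty\cap L^{N/2s}$ at every step, so that Theorem~\ref{Theorem-a-pole} can be legitimately reapplied at the next pole; the concluding smoothing step is then essentially cosmetic, relying only on the straightforward monotonicity of $\mu$ in $V$.
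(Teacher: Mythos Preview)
Your proposal is correct and follows essentially the same route as the paper: iterate Theorem~\ref{Theorem-a-pole} over the poles, then replace the characteristic functions by smooth cut-offs and use the monotonicity of $\mu$ in the potential to retain positivity. The paper's own proof is in fact slightly more terse about the iteration step (it simply asserts the existence of the $\delta_i$'s all at once), whereas you spell out explicitly why each intermediate $V^{(i)}$ remains in $\Theta$; this extra bookkeeping is exactly what is needed and your handling of it is correct.
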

\begin{proof}
For every $i=1,\dots,k$, let $\nu_i$ be such that
$\nu_i>0$ and $\lambda_i+\nu_i\in(0,\gamma_H)$.  From Theorem
\ref{Theorem-a-pole} we have that, for every $i=1,\dots,k$, there
exists $\delta_i$ such that, letting 
\[
\widehat
V=V+\sum_{i=1}^k\frac{\nu_i}{|x-a_{i}|^{2s}}\chi_{B'(a_{i},\delta_i)},
\]
$\mu(\widehat V)>0$.
Let us consider a  cut-off function
$\zeta:\R^N\to \R$ such that $\zeta\in
C^\infty(\R^N)$, $0\leq \zeta(x)\leq 1$,
$\zeta(x)=1$ for $|x|\leq\frac12$, and $\zeta(x)=0$ for
$|x|\geq1$. Let 
\[
\widetilde V(x)=V+\sum_{i=1}^k\frac{\nu_i}{|x-a_{i}|^{2s}}\zeta\left(\frac{x-a_i}{\delta_i}\right).
\]
Then $\widetilde V- V\in C^\infty(\R^N\setminus\{a_1,\dots,a_k\})$ and
$\widetilde V\geq V$. Moreover $\widetilde V$ is of the form
\eqref{eq:15} with $\tilde\lambda_i=\lambda_i+\nu_i>0$ and, in view of
\eqref{eq:def_inf} and the
fact that $\widetilde V\leq \widehat V$, $\mu (\widetilde V)\geq
\mu(\widehat V)>0$. The proof is thereby complete.
\end{proof}

\section{Localization of Binding}\label{sec:localization}

This section is devoted to the proof of Theorem \ref{thm:separation},
which is the main tool needed in order to prove our main
result. Indeed this tool ensures, inside the class $\Theta$, that the
sum of two positive operators is positive, provided one of them is
translated sufficiently far.

For any $\de>0$ and $a_1,\dots,a_k\in \R^N$, we define 
 \begin{equation}\label{eq:def_P^delta}
  \mathcal{P}^\de_{a_1,\dots,a_k}:=
\mathcal{P}^\de_\infty\cap\bigg(\bigcap_{j=1}^k \mathcal{P}^\de_{a_j}\bigg)
\end{equation}
where $\mathcal{P}^\de_\infty$ is defined in \eqref{eq:def_P_infty}
and, for all $j=1,\dots,k$,
 \begin{equation*}
\begin{aligned}
\mathcal{P}^\de_{a_j}=
\bigg\{ f\colon \R^N\to\R \colon &f\in C^1(B'(a_j,R_j)\setminus\{a_j\})\text{ for some $R_j>0$}\\[-10pt]
  &\text{and}~\abs{f(x)}+\abs{(x-a_j)\cdot \nabla f(x)}=O(\abs{x-a_j}^{-2s+\de})~\text{as }x\to a_j\bigg\}.
  \end{aligned}
 \end{equation*}

\begin{lemma}\label{separation-lemma}
Let 
\begin{gather*}
 V_1(x)=\sum_{i=1}^{k_1}\frac{\la_i^1\chi_{B'(a_i^1,r_i^1)}(x)}{|x-a_i^1|^{2s}}+\frac{\la_\infty^1 \chi_{\R^N \setminus B'_{R_1}}(x)}{|x|^{2s}}+W_1(x) \in \Theta^*, \\
 V_2(x)=\sum_{i=1}^{k_2}\frac{\la_i^2\chi_{B'(a_i^2,r_i^2)}(x)}{|x-a_i^2|^{2s}}+\frac{\la_\infty^2 \chi_{\R^N \setminus B'_{R_2}}(x)}{|x|^{2s}}+W_2(x) \in \Theta^*,
\end{gather*}
with $W_1\in \mathcal{P}^\de_{a^1_1,\dots,a^1_{k_1}}$, 
$W_2\in \mathcal{P}^\de_{a^2_1,\dots,a^2_{k_2}}$ for some $\de>0$.
 If $\mu(V_1), \mu(V_2)>0$ and $\la^1_\infty
+\la_\infty^2< \ga_H$, then there exists $R>0$ such that for every $y
\in \Rn\setminus \overline{B'_R}$ there exists  $\Phi_y \in \Dext$ such
that 
$\Phi_y$ is strictly positive and locally
 H\"older continuous
 in $\overline{\R^{N+1}_+} \setminus \{(0,a_i^1),(0, a_i^2+y) \}_{i=1,
   \dots, k_j, j=1,2}$ and 
\begin{equation}\label{eq:16}
\int_{\R^{N+1}_+}t^{1-2s}\nabla \Phi_y\cdot \nabla U\dxdt\geq \kappa_s\int_{\R^N}(V_1(x)+V_2(x-y))\Tr\Phi_y\Tr U\dx
\end{equation}
for all $U\in \Dext$, with $U\geq 0$ a.e.
\end{lemma}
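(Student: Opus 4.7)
The plan is to invoke Lemma~\ref{Criterion}(II) on each $V_j$ separately, combine the resulting positive supersolutions after translating the second by $y$, and verify the joint supersolution inequality \eqref{eq:16} by an a.e.\ pointwise comparison in which the slack $\eps_j$ and a tail reserve from Theorem~\ref{Theorem-a} absorb the cross-interaction for $|y|$ large. By Corollary~\ref{cor:pert-at-pole} applied to each $V_j$, we may assume without loss of generality that every finite-pole mass is strictly positive, because any supersolution found for the larger perturbed pair is automatically a supersolution for the original pair. Next, since $\la_\infty^1+\la_\infty^2<\ga_H$, we fix $\eta>0$ with $\la_\infty^1+\la_\infty^2+\eta<\ga_H$ and apply Theorem~\ref{Theorem-a} to $V_j$ with $\nu_\infty=\la_\infty^{3-j}+\eta$, obtaining $\widehat R_j>0$ such that the tail-enhanced potential
\[
V_j^\sharp(x):=V_j(x)+\frac{\la_\infty^{3-j}+\eta}{|x|^{2s}}\,\chi_{\R^N\setminus B'_{\widehat R_j}}(x)
\]
still satisfies $\mu(V_j^\sharp)>0$. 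After an $L^{N/2s}$-mollification controlled by Lemma~\ref{lemma:approx_potentials}, we may keep $V_j^\sharp\in\Theta^*$ with $W$-part in $\mathcal P^\delta_{a_1^j,\dots,a_{k_j}^j}$, so that Lemmas~\ref{lemma:regul_1}--\ref{lemma:regul_4} apply.

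Lemma~\ref{Criterion}(II) applied to $V_j^\sharp$ with $\tilde V=|V_j^\sharp|$ then produces $\eps_j>0$ and $\Phi_j\in\Dext$, strictly positive and locally H\"older continuous on $\overline{\R^{N+1}_+}\setminus\{(0,a_i^j)\}_i$, satisfying
\begin{equation}\label{eq:sep-ind}
\int_{\R^{N+1}_+}t^{1-2s}\nabla\Phi_j\cdot\nabla U\dxdt\geq \kappa_s\int_{\R^N}(V_j^\sharp+\eps_j|V_j^\sharp|)\Tr\Phi_j\Tr U\dx
\end{equation}
for every nonnegative $U\in\Dext$. Lemmas~\ref{lemma:regul_1}--\ref{lemma:regul_4} yield the two-sided bounds $\Tr\Phi_j(x)\asymp|x-a_i^j|^{a_{\la_i^j}}$ near each pole and $\Tr\Phi_j(x)\asymp|x|^{-(N-2s)-a_{\la_\infty^1+\la_\infty^2+\eta}}$ as $|x|\to\infty$, with uniform upper and positive lower bounds on compact regions avoiding the poles. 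The candidate is
\[
\Phi_y(t,x):=\Phi_1(t,x)+\Phi_2(t,x-y)\in\Dext,
\]
which, by translation invariance, is strictly positive and locally H\"older continuous on $\overline{\R^{N+1}_+}\setminus\{(0,a_i^1),(0,a_i^2+y)\}_i$.

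Summing \eqref{eq:sep-ind} for $j=1$ with its $y$-translate for $j=2$, and exploiting $V_j^\sharp\geq V_j$, the inequality \eqref{eq:16} reduces to an a.e.\ pointwise estimate whose left-hand side contains $\eps_j|V_j^\sharp|\Tr\Phi_j$ plus the tail compensators $\tfrac{\la_\infty^{3-j}+\eta}{|x|^{2s}}\chi_{|x|>\widehat R_j}\Tr\Phi_j$, and whose right-hand side is $V_1\Tr\Phi_2(\cdot-y)+V_2(\cdot-y)\Tr\Phi_1$. One partitions $\R^N$ into neighbourhoods of each $a_i^1$, of each $a_i^2+y$, a bounded middle region, and the far field: near a pole, the blow-up of $\eps_j|V_j^\sharp|\Tr\Phi_j$ dominates the bounded cross-terms, which in fact decay as a negative power of $|y|$; in the middle region, uniform boundedness together with $V_{3-j}(\cdot-y)\to 0$ as $|y|\to\infty$ suffices; in the far field the tail compensator with margin $\eta$ absorbs the cross-term $V_{3-j}(\cdot-y)\Tr\Phi_j$.

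The main obstacle is the uniform far-field comparison $V_{3-j}(x-y)\leq\tfrac{\la_\infty^{3-j}+\eta}{|x|^{2s}}$ for $|x|>\widehat R_j$ and $|y|$ large. Using the decay class $\mathcal P^\delta$ of $W_{3-j}$ this reduces to comparing $|x-y|^{-2s}$ with $|x|^{-2s}$, which requires a further splitting of the far field according to whether $|x|\gg|y|$ (then $|x-y|\sim|x|$, and the bound holds with room to spare once $\widehat R_j/|y|$ is large enough) or $|x|\sim|y|$ (where the neighbourhoods of the translated poles $a_i^2+y$ take over and the analysis from the pole regions applies). The strict inequality $\la_\infty^1+\la_\infty^2<\ga_H$ is indispensable: it provides the positive margin $\eta$ entering the tail compensation via Theorem~\ref{Theorem-a}, and thereby makes the simultaneous absorption of both far-field cross-tails possible.
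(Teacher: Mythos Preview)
Your overall strategy—build a positive supersolution for each $V_j$ with an enhanced tail, translate, add, and verify \eqref{eq:16} pointwise by region—is the paper's strategy as well. The near-pole and middle-region analyses are fine. The gap is in the far field.

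You propose to absorb the cross term $V_2(x-y)\Tr\Phi_1(x)$ by the tail reserve $\tfrac{\lambda_\infty^2+\eta}{|x|^{2s}}\Tr\Phi_1(x)$, which would require $V_2(x-y)\le \tfrac{\lambda_\infty^2+\eta}{|x|^{2s}}$. That inequality fails on a large set: for $x$ with $|x|$ large but $|x-y|$ of order $|x|/K$ (say $x$ along the $y$-direction with $|x|\approx 2|y|$), one has $|x-y|^{-2s}\approx K^{2s}|x|^{-2s}$, so $\lambda_\infty^2|x-y|^{-2s}$ exceeds $(\lambda_\infty^2+\eta)|x|^{-2s}$ by an arbitrarily large factor. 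Your proposed case split does not repair this: the region ``$|x|\sim|y|$'' is an annulus of volume $\sim|y|^N$, not the finitely many small balls around the translated poles, and the phrase ``once $\widehat R_j/|y|$ is large enough'' is impossible because $\widehat R_j$ is fixed before $|y|\to\infty$. The slack $\eps_j|V_j^\sharp|\Tr\Phi_j$ does not help either, since $\eps_j$ is produced by Lemma~\ref{Criterion}(II) with no lower bound. A concrete counterexample: $\lambda_\infty^1=0$, $\lambda_\infty^2$ close to $\gamma_H$ (forcing $\eta$ small), $N$ close to $2s$; then with your equal-weight combination $\Phi_1+\Phi_2(\cdot-y)$ the far-field balance fails at points with $|x|\approx 2|x-y|$.

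The paper avoids term-by-term domination altogether by an algebraic factorisation that your construction does not allow. It enhances the tail of each $V_j$ by $\nu_\infty^j:=\Lambda-\lambda_\infty^j$ so that both $\Phi_j$ have the \emph{same} decay $|x|^{-(N-2s+a_\Lambda)}$, and crucially takes $\Phi_y=\nu_\infty^2\Phi_1+\nu_\infty^1\Phi_2(\cdot-y)$ with the weights \emph{swapped}. With these choices, writing $A=|x|^{-(N-2s+a_\Lambda)}$, $B=|x-y|^{-(N-2s+a_\Lambda)}$, $C=|x|^{-2s}$, $D=|x-y|^{-2s}$, every far-field term acquires the common coefficient $\nu_\infty^1\nu_\infty^2$, and the excess becomes
\[
\nu_\infty^1\nu_\infty^2\bigl(AC+BD-BC-AD\bigr)=\nu_\infty^1\nu_\infty^2\,(A-B)(C-D)\ge 0,
\]
nonnegative by monotonicity. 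This identity is the missing idea: it handles all far-field points simultaneously, without any comparison of $|x|$ to $|y|$.
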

\begin{proof}
First of all we observe that it is not restrictive to assume that
$\lambda_i^j>0$ for all $i=1,\dots,k_j$, $j=1,2$. Indeed, letting $V_1,V_2$ as in
the assumptions, from Corollary \ref{cor:pert-at-pole} there exist
$\widetilde V_1,\widetilde V_2\in\Theta^*$ with positive masses at poles
such that $\widetilde V_j\geq V_j$ and $\mu( \widetilde V_j)>0$ for
$j=1,2$. If the theorem is true under the further assumption of
positivity of masses at poles, we conclude that,  for every $y$ with
$|y|$ sufficiently large, there exists $\Phi_y \in \Dext$ strictly positive and locally
 H\"older continuous
 in $\overline{\R^{N+1}_+} \setminus \{(0,a_i^1),(0, a_i^2+y) \}_{i=1,
   \dots, k_j, j=1,2}$ such that \eqref{eq:16} holds with $\widetilde
 V_1(x)+\widetilde V_2(x-y)$ in the right hand side integral 
instead of $V_1(x)+V_2(x-y)$. Since $\widetilde
V_1(x)+\widetilde V_2(x-y)\geq  V_1(x)+V_2(x-y)$ we obtain
\eqref{eq:16}. Then we can assume that $\lambda_i^j>0$ for all
$i=1,\dots,k_j$, $j=1,2$, without loss of generality.

Let $\eps\in(0,\gamma_H)$  be such that
$\la^1_\infty+\la^2_\infty<\ga_H-\eps$, 
 $\la^1_\infty<\ga_H-\eps$, and $\la^2_\infty<\ga_H-\eps$ and let
 $\Lambda:=\ga_H-\eps$. Let us set 
\[
\nu_\infty^1:=\Lambda-\la^1_\infty,\quad 
\nu_\infty^2:=\Lambda-\la^2_\infty,
\]
so that $\nu_\infty^1,\nu_\infty^2>0$. Let $0<\eta<1$ be such that
\begin{equation}\label{E1}
\la_\infty^2<\nu_\infty^1(1-2\eta)\quad\text{and}\quad \la_\infty^1<\nu_\infty^2(1-2\eta).
\end{equation}
Let us choose $\bar{R}>0$ large enough so that
\[
 \cup_{i=1}^{k_j} B'(a_i^j,r_i^j) \subset B'_{\bar{R}} \quad \text{for }j=1,2.
\]
We observe that, by Theorem \ref{Theorem-a}, there exists $\tilde{R}_j>0$ such that
\[
 \mu\left(V_j+\frac{\nu_\infty^j }{|x|^{2s}}\chi_{\R^N \setminus B'_{\tilde{R}_j}}\right)>0.
\]
Since $\lambda_i^j>0$ implies that $a_{\lambda_i^j}<0$, we
can 
fix some $\sigma>0$ such that $\sigma<2s$ and $\sigma<-a_{\lambda_i^j}$ for all
$i=1,\dots,k_j$, $j=1,2$.
Let us consider, for $j=1,2$, $p_j\in
C^\infty(\R^N \setminus
\{a_1^j,\dots,a_{k_j}^j\})\cap 
\mathcal{P}^\sigma_{a^j_1,\dots,a^j_{k_j}}$ such that $p_j(x)>0$ for
all $x\in\R^N$ and 
\begin{equation}\label{p-j}
p_j(x)\geq \frac{1}{|x-a_i^j|^{ 2s-\sigma}} \text{ if  } x\in
B'(a_i^j, r_i^j),\quad  p_j(x)\geq 
1  \text{ if  }x\in B'_{\bar{R}} \setminus \cup_{i=1}^{k_j}B(a_i^j, r_i^j).
\end{equation}
Since $p_j\in L^{\frac{N}{2s}}(\R^N)$ satisfies the hypotheses of Lemma \ref{lemma:compact_emb} the infimum
\[
\mu_j=\inf_{\substack{U \in \Dext \\ \Tr U\nequiv 0
  }}\frac{\int_{\R^{N+1}_+}t^{1-2s}|\na
  U|^2\dxdt-\kappa_s\int_{\Rn}\Big[V_j+\tfrac{\nu_\infty^j}{|x|^{2s}}\chi_{\R^N
      \setminus B'_{\tilde{R}_j}}\Big]\abs{\Tr
    U}^2\dx}{\int_{\R^{N}}p_j\abs{\Tr U}^2\dx}>0
\]
is achieved by some nonnegative $\Psi_j \in \Dext$, for $j=1,2$. In addition, $\Psi_j$ weakly solves
\begin{equation}\label{E2}
\left\{\begin{aligned}
-{\divergence}(t^{1-2s}\na \Psi_j) &= 0, &&\text{in } \R^{N+1}_+, \\
-\lim_{t \to 0^+}t^{1-2s}\frac{\pa \Psi_j}{\pa t}&=\kappa_s\left[V_j+\frac{\nu_\infty^j }{|x|^{2s}}\chi_{\R^N \setminus B'_{\tilde{R}_j}}+\mu_j p_j\right]\Tr\Psi_j, && \text{on } \R^{N}.
\end{aligned}\right.
\end{equation}
From Proposition \ref{prop:jlx} we know that $\Psi_j$ is
locally H\"older continuous in $\overline{\R^{N+1}_+}\setminus
\{(0,a_1^j),\dots,(0,a_{k_j}^j)\})$.

In order to prove that $\Psi_j$ is strictly positive in
$\overline{\R^{N+1}_+}\setminus
\{(0,a_1^j),\dots,(0,a_{k_j}^j)\}$, we compare
it with the unique weak solution $\widetilde\Psi_j \in \Dext$  to the problem 
\begin{equation}\label{eq:13}
\left\{\begin{aligned}
-{\divergence}(t^{1-2s}\na \widetilde\Psi_j) &=0 , &&\text{in }\R^{N+1}_+, \\
-\lim_{t \to 0^+}t^{1-2s}\frac{\pa \widetilde\Psi_j}{\pa t}&=\kappa_s
V_j\Tr \widetilde\Psi_j+\kappa_s\mu_j p_j\Tr\Psi_j, &&\text{on }\R^N,
\end{aligned}
\right.
\end{equation}
whose existence directly follows from the Lax-Milgram Lemma.
The difference $\widetilde\Phi_j=\Psi_j - \widetilde\Psi_j$ belongs to $\Dext$ and weakly solves 
\begin{equation*}
\left\{\begin{aligned}
-{\divergence}(t^{1-2s}\na \widetilde\Phi_j) &=0 , &&\text{in }\R^{N+1}_+, \\
-\lim_{t \to 0^+}t^{1-2s}\frac{\pa \widetilde\Phi_j}{\pa t}&=\kappa_s V_j\Tr \widetilde\Phi_j+\kappa_s
\frac{\nu_\infty^j }{|x|^{2s}}\chi_{\R^N \setminus B'_{\tilde{R}_j}}\Tr\Psi_j, &&\text{on }\R^N.
\end{aligned}
\right.
\end{equation*}
By testing the above equation with
$-\widetilde\Phi_j^-$ and recalling that   $\mu(V_j)>0$, we obtain that
$\widetilde\Phi_j\geq 0$ in $\R^{N+1}_+$ and hence $\Psi_j \geq \widetilde\Psi_j$.
Moreover, testing \eqref{eq:13} with
$-\widetilde\Psi_j^-$, we also obtain that $\widetilde\Psi_j\geq0$  in $\R^{N+1}_+$.
From the classical Strong Maximum
Principle and Proposition \ref{prop:cabre_sire} (whose
assumption \eqref{eq:9} for \eqref{eq:13} is satisfied thanks to Lemma \ref{l:cabre_sire} and the  assumption $V_j\in\Theta^*$) it
follows that
$\widetilde\Psi_j>0$ in
$\overline{\R^{N+1}_+}\setminus
\{(0,a_1^j),\dots,(0,a_{k_j}^j)\}$
and hence 
\[
\Psi_j>0\quad\text{in}\quad 
\overline{\R^{N+1}_+}\setminus
\{(0,a_1^j),\dots,(0,a_{k_j}^j)\}.
\]
Lemma \ref{lemma:regul_2} yields
\[
 \lim_{|x| \to \infty}\Psi_j(0,x)|x|^{N-2s+a_\Lambda}=\ell_j>0,
\]
for some $\ell_j>0$ (see \eqref{eq:6} for the
notation $a_\Lambda$).  Hence, the function $\Phi_j(t,x):=\frac{\Psi_j(t,x)}{\ell_j}$ satisfies \eqref{E2} and $\Phi_j(0,x)  \sim |x|^{-(N-2s+a_\Lambda)}$ for $\abs{x}\to\infty.$ Therefore, there exists $\rho> \max\{\tilde{R}_1,\tilde{R}_2,\bar{R} \}$ such that
\begin{equation}\label{E3}
(1-\eta^2)|x|^{-(N-2s+a_\Lambda)} \leq \Phi_j(0,x) \leq (1+\eta) |x|^{-(N-2s+a_\Lambda)}
\end{equation}
and
\begin{equation}\label{E4}
|W_1(x)| \leq \frac{\eta \nu_\infty^2}{|x|^{2s}},\qquad |W_2(x)| \leq \frac{\eta \nu_\infty^1}{|x|^{2s}}
\end{equation}
for all $x\in\R^N\setminus B'_\rho$. Also, form Lemma \ref{lemma:regul_1} we know that there exists $C>0$ such that
\begin{equation}\label{phi-j}
\frac{1}{C}|x-a_i^j|^{a_{\lambda_i^j}}  \leq \Phi_j(0,x) \leq C|x-a_i^j|^{a_{\lambda_i^j}} \quad \text{in }B'(a_i^j,r_i^j),
\end{equation}
for $i=1, \dots k_j,~j=1,2$. For any $y \in \Rn$, we define  
\[
 \Phi_y(t,x):=\nu_\infty^2\Phi_1(t,x)+\nu_\infty^1\Phi_2(t,x-y) \in \Dext.
\]
Then 
\begin{equation*}
\int_{\R^{N+1}_+}t^{1-2s}\nabla \Phi_y\cdot \nabla U\dxdt-
\kappa_s\int_{\R^N}(V_1(x)+V_2(x-y))\Tr\Phi_y\Tr U\dx
=\int_{\R^N}g_y(x)\Tr U\dx
\end{equation*}
for all $U\in \Dext$, 
where 
\begin{gather*}
 g_y(x):=\kappa_s\bigg[\mu_1 \nu_\infty^2 p_1(x)\Phi_1(0,x)
 +\frac{\nu_\infty^1 \nu_\infty^2}{|x|^{2s}}\chi_{\R^N \setminus
   B'_{\tilde R_1 }}(x)\Phi_1(0,x)
+\mu_2 \nu_\infty^1 p_2(x-y)\Phi_2(0,x-y)\\
+\frac{\nu_\infty^1 \nu_\infty^2}{|x-y|^{2s}}\chi_{\R^N \setminus
  B'(y,\tilde R_2)}(x)\Phi_2(0,x-y)-\nu_\infty^1 V_1(x)\Phi_2(0,x-y)-\nu_\infty^2 V_2(x-y)\Phi_1(0,x)\bigg].
\end{gather*}
Therefore, to conclude the proof it is enough to show that
$g_y\geq0$ a.e. in $\R^N$.

From \eqref{E1}, \eqref{E3} and \eqref{E4}, it follows that in
$\Rn \setminus \big(B'_\rho \cup B'(y,\rho)\big)$ 
\begin{align*} 
  g_y(x)&\geq \kappa_s\bigg[\frac{\nu_\infty^1
    \nu_\infty^2}{|x|^{2s}}\Phi_1(0,x)+
  \frac{\nu_\infty^1 \nu_\infty^2}{|x-y|^{2s}}\Phi_2(0,x-y)\\
 &\qquad -\nu_\infty^1 \left(\frac{\la_\infty^1}{|x|^{2s}}+W_1(x)\right)
  \Phi_2(0,x-y)-\nu_\infty^2 \left(\frac{\la_\infty^2}{|{x-y}|^{2s}}+W_2(x-y)\right)\Phi_1(0,x)\bigg]\\
&  >\kappa_s\nu_\infty^1\nu_\infty^2(1-\eta^2)\bigg[|x|^{-(N+a_\Lambda)}
  +|x-y|^{-(N+a_\Lambda)}\\
 &\qquad -|x|^{-(N-2s+a_\Lambda)}|x-y|^{-2s}-|x|^{-2s}
  |x-y|^{-(N-2s+a_\Lambda)}\bigg]\\
&=\kappa_s\nu_\infty^1\nu_\infty^2(1-\eta^2)\left(\frac1{|x|^{N-2s+a_\Lambda}}-
\frac1{|x-y|^{N-2s+a_\Lambda}}\right)
\left(\frac1{|x|^{2s}}-
\frac1{|x-y|^{2s}}\right)
\geq 0.
\end{align*}
For $|y|>R>2\rho$, we have $B'_\rho \cap B'(y,\rho)=\emptyset$. From
\eqref{p-j}, \eqref{E3}, \eqref{E4}, \eqref{phi-j}  and the choice
of $\sigma$ we have that, in $B(a_i^1,r_i^1)$,
\begin{align*}
 g_y(x)&\geq\kappa_s\bigg[\mu_1 \nu_\infty^2 p_1(x)\Phi_1(0,x)+\frac{\nu_\infty^1 \nu_\infty^2}{|x-y|^{2s}}\Phi_2(0,x-y) \\
&\qquad\qquad - \nu_\infty^1 V_1(x)\Phi_2(0,x-y)-\nu_\infty^2
  V_2(x-y)\Phi_1(0,x)\bigg]\\
&
\geq
\kappa_s|x-a_i^1|^{a_{\lambda_i^1}-2s+\sigma}\bigg[
\frac{\mu_1
  \nu_\infty^2}{C}\\
&\qquad\qquad-\nu_\infty^1(1+\eta)|x-y|^{-(N-2s+a_\Lambda)}|x-a_i^1|^{-a_{\lambda_i^1}-\sigma}(\lambda_i^1+\|W_1\|_{L^{\infty}(\R^N)}
|x-a_i^1|^{2s})\\
&\qquad\qquad-\nu_\infty^2\nu_\infty^1(1-\eta)C |x-y|^{-2s}|x-a_i^1|^{2s-\sigma}\bigg]
\\
&\geq \kappa_s|x-a_i^1|^{a_{\lambda_i^1}-2s+\sigma} \bigg[ \frac{\mu_1 \nu_\infty^2}{C} +o(1) \bigg],
\end{align*}
as $|y| \to \infty$. Now let
$x\in B'_\rho \setminus \left(\cup_{i=1}^{k_1}B'(a_i^1,r_i^1)\right)$:
since $\Phi_1$ is positive and continuous we have
$\tilde{C}^{-1}>\Phi_1(0,x)>\tilde{C}$, for some $\tilde{C}>0$, and so, thanks to
\eqref{p-j}, \eqref{E3} and \eqref{E4}, there holds
\[
 g_y(x) \geq \kappa_s\mu_1 \nu_\infty^2 \tilde C +o(1),
\]
as $|y| \to \infty$. One can similarly prove that, for $\abs{y}$ sufficiently large, $g_y(x)\geq 0$ in $B'(y,\rho)$ as well. The proof is thereby complete.
\end{proof}

\begin{proof}[Proof of Theorem \ref{thm:separation}]
	First, let
	\begin{equation}\label{L-1}
	0<\eps<\min\left\{2S \mu(V_j),\frac{\mu(V_j)}{2}\bigg[ \frac{1}{\ga_H} \left( \sum_{i=1}^k |\la_i^j| +|\la_\infty^j|\right) +S^{-1}\norm{W}_{L^{N/2s}(\Rn)} \bigg]^{-1} \right\}
	\end{equation}	
	for $j=1,2$, such that, in addition,
        $\la_\infty^1+\la_\infty^2+\eps ( \la_\infty^1 +\la_\infty^2
        )<\ga_H$ and $\lambda_i^j +\eps \lambda_i^j<\gamma_H$
         for all $i=1,\dots,k_j,\infty$. Similarly to \eqref{eq:criterion_2}, one can prove that $\mu(V_j+\eps V_j)>\frac{\mu(V_j)}{2}>0$ for $j=1,2$. Moreover, let $\sigma=\sigma(\eps)$ be such that
	\begin{equation}\label{eq:separation_1}
		0<\sigma< \min\left\{\frac{S\mu(V_1)}{2},\frac{S\mu(V_2)}{2},\frac{S\eps}{2}\right\}.
	\end{equation}
	Let, for $j=1,2$,
	\[
		 \hat{V}_j(x)=\sum_{i=1}^{k_j}\frac{\la_i^j\chi_{B'(a_i^j,r_i^j)}(x)}{|x-a_i^j|^{2s}}+\frac{\la_\infty^j \chi_{\R^N \setminus B'_{R_j}}(x)}{|x|^{2s}}+\hat{W}_j(x) \in \Theta^*,
	\]
	be such that  $\hat{W}_j\in
        \mathcal{P}^\de_{a^j_1,\dots,a^j_{k_j}}$  for some $\delta>0$ and
	\begin{equation}\label{eq:separation_2}
	\lVert\hat{V}_j-V_j\rVert_{L^{N/2s}(\R^N)}<\frac{\sigma}{1+\epsilon}.
	\end{equation}
	From Lemma \ref{lemma:approx_potentials}, \eqref{eq:separation_1} and \eqref{eq:separation_2} we deduce that
	\[
	\mu(\hat{V}_j+\eps\hat{V}_j)\geq \mu(V_j+\eps V_j)-(1+\eps)S^{-1}\lVert\hat{V}_j-V_j\rVert_{L^{N/2s}(\R^N)}>0.
	\]
	Hence we infer from Lemma \ref{separation-lemma} that there
        exists $R>0$ such that, for all
        $y\in\R^N\setminus \overline{B'_R}$, there exists
        $\Phi_y \in \Dext$   such that $\Phi_y$ is strictly positive and locally
 H\"older continuous
 in $\overline{\R^{N+1}_+} \setminus \{(0,a_i^1),(0, a_i^2+y) \}_{i=1,
   \dots, k_j, j=1,2}$ and 
	\begin{equation*}
	\int_{\R^{N+1}_+}t^{1-2s}\nabla \Phi_y\cdot \nabla U\dxdt-\kappa_s\int_{\R^N}\left[\hat{V}_1(x)+\eps\hat{V}_1(x)+\hat{V}_2(x-y)+\eps\hat{V}_2(x-y)\right]\Tr\Phi_y\Tr U\dx\geq 0
	\end{equation*}
	for all $U\in \Dext$, with $U\geq 0$ a.e. Therefore, thanks to the positivity criterion (Lemma \ref{Criterion}), we know that
	\[
		\mu(\hat{V}_1(\cdot)+\hat{V}_2(\cdot-y))\geq \frac{\eps}{\eps+1}.
	\]
Combining Lemma \ref{lemma:approx_potentials}
        with \eqref{eq:separation_1} and \eqref{eq:separation_2}, we
        finally deduce that
	\begin{multline*}
		\mu(V_1(\cdot)+V_2(\cdot-y))\geq \mu(\hat{V}_1(\cdot)+\hat{V}_2(\cdot-y)) \\
		-S^{-1}\lVert V_1-\hat{V}_1 \rVert_{L^{N/2s}(\R^N)} -S^{-1}\lVert V_2(\cdot-y)-\hat{V}_2(\cdot-y) \rVert_{L^{N/2s}(\R^N)} >0,
	\end{multline*}
		thus completing the proof.	
\end{proof}

\section{Proof of Theorem \ref{theorem}}\label{sec:thm}
In order to prove Theorem \ref{theorem}, we first need the following lemma, concerning the left-hand side in Hardy inequality \eqref{eq:hardy}.
\begin{lemma}\label{lemma:conv_hardy}
  We have that   
 \begin{equation*}
  \lim_{\abs{\xi}\to 0}\int_{\R^N}\frac{\abs{u(x)}^2}{\abs{x+\xi}^{2s}}\dx=\int_{\R^N}\frac{\abs{u(x)}^2}{\abs{x}^{2s}}\dx \quad\text{and}\quad
  \lim_{\abs{\xi}\to +\infty}\int_{\R^N}\frac{\abs{u(x)}^2}{\abs{x+\xi}^{2s}}\dx=0
\end{equation*}
for any $u\in\Ds$.
\end{lemma}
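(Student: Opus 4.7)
The plan is to treat both limits by reducing everything to convergence statements in the homogeneous fractional Sobolev space $\Ds$, and then exploiting the fractional Hardy inequality \eqref{eq:hardy} to transfer this convergence to the weighted integrals. A useful preliminary fact is the continuity of translations in $\Ds$: for every $u\in\Ds$,
\[
\|u(\cdot-\xi)-u\|_{\Ds}\to 0\quad\text{as }|\xi|\to 0.
\]
This is standard and can be obtained either by Plancherel (writing the seminorm in Fourier variables as $\int|e^{-i\xi\cdot\omega}-1|^2|\omega|^{2s}|\hat u(\omega)|^2d\omega$ and applying dominated convergence) or by the density of $C_c^{\infty}(\R^N)$ in $\Ds$ combined with translation invariance of $\|\cdot\|_{\Ds}$ and a uniform bound argument.

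For the first limit, I would perform the change of variables $y=x+\xi$ to obtain
\[
\int_{\R^N}\frac{|u(x)|^2}{|x+\xi|^{2s}}\dx=\int_{\R^N}\frac{|u(y-\xi)|^2}{|y|^{2s}}\dy,
\]
and then note that, thanks to \eqref{eq:hardy} applied to $u(\cdot-\xi)-u\in\Ds$,
\[
\int_{\R^N}\frac{|u(y-\xi)-u(y)|^2}{|y|^{2s}}\dy\leq \frac{1}{\gamma_H}\|u(\cdot-\xi)-u\|_{\Ds}^2\longrightarrow 0\quad\text{as }|\xi|\to 0.
\]
This shows that $u(\cdot-\xi)\to u$ in $L^2(\R^N;|y|^{-2s}\dy)$, so the corresponding norms converge, which is exactly the first claim.

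For the second limit, I would argue by approximation. Fix $\varepsilon>0$ and choose $u_n\in C_c^\infty(\R^N)$ with $\|u-u_n\|_{\Ds}$ arbitrarily small. Using the elementary inequality $|u|^2\leq 2|u-u_n|^2+2|u_n|^2$ together with the translation invariance of $\|\cdot\|_{\Ds}$ and Hardy \eqref{eq:hardy} (applied after the change of variables $y=x+\xi$), the remainder term satisfies
\[
\int_{\R^N}\frac{|u-u_n|^2}{|x+\xi|^{2s}}\dx \leq \frac{1}{\gamma_H}\|u-u_n\|_{\Ds}^2
\]
uniformly in $\xi$, and thus can be made smaller than $\varepsilon/2$ by choosing $n$ large. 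With such $n$ fixed, if $\supp u_n\subset B'_{R_n}$, then for $|\xi|>2R_n$ one has $|x+\xi|\geq|\xi|/2$ on $\supp u_n$, so
\[
\int_{\R^N}\frac{|u_n|^2}{|x+\xi|^{2s}}\dx\leq\frac{2^{2s}}{|\xi|^{2s}}\|u_n\|_{L^2(\R^N)}^2\longrightarrow 0 \quad\text{as }|\xi|\to+\infty,
\]
which makes the remaining term smaller than $\varepsilon/2$ for $|\xi|$ sufficiently large.

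The only genuinely delicate point is the continuity of translation in $\Ds$; once that is in hand, both limits become straightforward consequences of Hardy \eqref{eq:hardy} and a routine $\varepsilon/2$ approximation. Everything else is a change of variables and compact support estimates.
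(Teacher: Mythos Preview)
Your argument is correct. Both limits follow as you indicate, and the second one is handled essentially as the paper would have it: approximation by compactly supported functions, uniform control of the error via Hardy, and a direct estimate on the compactly supported piece.

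For the first limit, your route differs from the paper's. You reduce the question to continuity of the translation map $\xi\mapsto u(\cdot-\xi)$ in $\Ds$, and then use Hardy to pass this to convergence in $L^2(\R^N;|y|^{-2s}\dy)$. The paper instead invokes the density of $C_c^\infty(\R^N\setminus\{0\})$ in $\Ds$ (Lemma~\ref{l:density}): for a test function supported away from the origin, the weight $|x+\xi|^{-2s}$ is uniformly bounded on the support for all small $\xi$, so the Dominated Convergence Theorem applies directly; the extension to general $u\in\Ds$ is then by Hardy, exactly as in your second limit. The paper's approach avoids having to verify continuity of translations in $\Ds$ as a separate ingredient, at the modest cost of appealing to the capacity-type density result of Lemma~\ref{l:density}. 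Your approach is self-contained once translation continuity is established (and your Plancherel justification of that point is fine), and has the minor advantage that it does not need functions vanishing near the origin. Either way the substance is the same: density plus Hardy.
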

\begin{proof}
 The proof easily follows from  density of
 $C_c^\infty(\R^N\setminus\{0\})$ in $\Ds$ (see Lemma \ref{l:density}),
  the Dominated Convergence Theorem and the fractional Hardy inequality \eqref{eq:hardy}.
\end{proof}

We are now able to prove Theorem \ref{theorem}.

\begin{proof}[Proof of Theorem \ref{theorem}]
  First we prove that condition \eqref{lambda-i} is sufficient for the
  existence of at least one configuration of poles $a_1,\dots,a_k$
  such that the quadratic form associated to
  $\mathcal{L}_{\lambda_1,\dots,\lambda_k,a_1,\dots,a_k}$ is positive
  definite. In order to do this, we argue by induction on the number
  of poles $k$. For any $k$ we assume  the masses to be sorted in 
  increasing order $\lambda_1\leq\cdots\leq \lambda_k$. If $k=2$ the
  claim is proved in Remark \ref{rmk:2_poles}. Suppose now the claim
  is proved for $k-1$. If $\lambda_k\leq 0$ the proof is trivial, so
  let us assume $\lambda_k>0$: since \eqref{lambda-i} holds, it is
  true also for $\lambda_1,\dots,\lambda_{k-1}$, hence there exists a
  configuration of poles $a_1,\dots,a_{k-1}$ such that
  $Q_{\lambda_1,\dots,\lambda_{k-1},a_1,\dots,a_{k-1}}$ is positive
  definite. If we let
 \[
  V_1(x)=\sum_{i=1}^{k-1}\frac{\lambda_i}{\abs{x-a_1}^{2s}}\quad\text{and}\quad V_2(x)=\frac{\lambda_k}{\abs{x}^{2s}},
 \]
we have that $V_1,V_2\in\Theta$ satisfy the assumptions of Theorem \ref{thm:separation}. Therefore there exists $a_k\in\R^N$ such that
\[
 \mathcal{L}_{\lambda_1,\dots,\lambda_k,a_1,\dots,a_k}=(-\Delta)^s-(V_1+V_2(\cdot-a_k))
\]
is positive definite. This concludes the first part.

We now prove the necessity of condition \eqref{lambda-i}. Let $\eps>0$ be such that
\begin{equation}\label{eq:proof_thm_1}
 \norm{u}_{\Ds}^2-\sum_{i=1}^k\lambda_i\int_{\R^N}\frac{\abs{u(x)}^2}{\abs{x-a_i}^{2s}}\dx\geq \eps\norm{u}_{\Ds}^2
\end{equation}
for all $u\in\Ds$ and let $\delta\in(0,\eps\ga_H)$. Assume by
contradiction that $\lambda_j\geq \ga_H$ for some
$j\in\{1,\dots,k\}$. By optimality of $\ga_H$ in Hardy inequality
\eqref{eq:hardy} and by density of $C_c^\infty(\R^N)$ in $\Ds$, we
have that there exists $\varphi\in C_c^\infty(\R^N)$ such that
\begin{equation}\label{eq:proof_thm_2}
  \norm{\varphi}_{\Ds}^2-\lambda_j\int_{\R^N}\frac{\abs{\varphi(x)}^{2}}{\abs{x}^{2s}}\dx<\delta \int_{\R^N}\frac{\abs{\varphi(x)}^2}{\abs{x}^{2s}}\dx.
\end{equation}
If we let $\varphi_\rho:=\rho^{-\frac{N-2s}{2}}\varphi(x/\rho)$, we have that
\begin{equation}\label{eq:proof_thm_3}
 \begin{aligned}
 Q_{\lambda_1,\dots,\lambda_k,a_1,\dots,a_k}(\varphi_\rho(\cdot-a_j))&=\norm{\varphi}_{\Ds}^2-\lambda_j\int_{\R^N}\frac{\abs{\varphi(x)}^2}{\abs{x}^{2s}}\dx-\sum_{i\neq j}\lambda_i\int_{\R^N}\frac{\abs{\varphi(x)}^2}{\abs{x-\frac{a_i-a_j}{\rho}}^{2s}}\dx  \\
 & \to \norm{\varphi}_{\Ds}^2-\lambda_j\int_{\R^N}\frac{\abs{\varphi(x)}^2}{\abs{x}^{2s}}\dx \qquad\text{as }\rho\to 0^+,
\end{aligned}
\end{equation}
in view of Lemma \ref{lemma:conv_hardy}. Combining \eqref{eq:proof_thm_1}, \eqref{eq:proof_thm_2}, \eqref{eq:proof_thm_3} and Hardy inequality \eqref{eq:hardy} we obtain
\begin{equation*}
 \eps\norm{\varphi}_{\Ds}^2\leq \norm{\varphi}_{\Ds}^2-
\lambda_j\int_{\R^N}\frac{\abs{\varphi(x)}^2}{\abs{x}^{2s}}\dx \\ <\delta \int_{\R^N}\frac{\abs{\varphi(x)}^2}{\abs{x}^{2s}}\dx \leq\frac{\de}{\ga_H}\norm{\varphi}_{\Ds}^2,
\end{equation*}
which is a contradiction, because of the choice of $\delta$.

Now suppose that $K:=\sum_{i=1}^k \lambda_i\geq \ga_H$. Arguing analogously, there exists $\varphi\in C_c^\infty(\R^N)$ such that 
 \begin{equation*}\label{eq:proof_thm_4}
 \norm{\varphi}_{\Ds}^2-K\int_{\R^N}\frac{\abs{\varphi(x)}^2}{\abs{x}^{2s}}\dx<\delta \int_{\R^N}\frac{\abs{\varphi(x)}^2}{\abs{x}^{2s}}\dx.
\end{equation*}
 The function $\varphi_\rho(x):=\rho^{-\frac{N-2s}{2}}\varphi(x/\rho)$ satisfies
 \begin{equation*}
 \begin{aligned}
    Q_{\lambda_1,\dots,\lambda_k,a_1,\dots,a_k}(\varphi_\rho)&=\norm{\varphi}_{\Ds}^2-\sum_{i=1}^k\lambda_i\int_{\R^N}\frac{\abs{\varphi(x)}^2}{\abs{x-a_i/\rho}^{2s}}\dx \\
    &\to \norm{\varphi}_{\Ds}^2-K\int_{\R^N}\frac{\abs{\varphi(x)}^2}{\abs{x}^{2s}}\dx\qquad\text{as }\rho\to+\infty,
 \end{aligned}
 \end{equation*}
thanks to Lemma \ref{lemma:conv_hardy}. With the same argument as above, we again reach a contradiction.
\end{proof}

\section{Proof of Proposition \ref{Prop-1}}\label{sec:prop}
Finally, in this section we present the proof of Proposition \ref{Prop-1}, that is independent of the previous results from the point of view of the technical approach.

\begin{proof}[Proof of Proposition \ref{Prop-1}]

  First, let us denote
  $\bar{\la}=\max \{0, \la_1,\dots, \la_k, \la_\infty \}$. By
  hypothesis there exists
  $\alpha\in\big(0,1-\frac{\bar{\lambda}}{\ga_H}\big)$ such that
  $\mu(V)\leq 1-\frac{\bar{\lambda}}{\ga_H}-\alpha$. From Lemma
  \ref{Lem-2} we know that there exists $\delta>0$ such that, denoting
  by
\[
 \bar{V}=\sum_{i=1}^{k}\frac{\la_i
   \zeta(\frac{x-a_i}\delta)}{|x-a_i|^{2s}}+\frac{\la_\infty
\tilde\zeta(\frac xR)}{|x|^{2s}},
\]
with $\zeta,\tilde\zeta$ being as in Lemma  \ref{Lem-2}, 
we have that 
\begin{equation}\label{E-1}
\mu(\bar{V}) \geq 1-\frac{\bar{\la}}{\ga_H}-\frac{\al}{2}.
\end{equation}
if $\bar{\lambda}>0$ and $\mu(\bar{V})\geq 1$ if
$\bar{\lambda}=0$. We can write $V=\bar{V}+\bar{W}$ for some $\bar{W} \in L^{N/2s}(\Rn)$. Now let $\{U_n\}_n\subseteq\Dext$ be a minimizing sequence for $\mu(V)$, i.e.
\begin{equation}\label{E-2}
\int_{\R^{N+1}_+}t^{1-2s}|\na U_n|^2\dxdt-\kappa_s\int_{\Rn}V\abs{\Tr U_n}^2\dx=\mu(V)+o(1),\quad\text{as }n\to \infty
\end{equation}
and $\int_{\R^{N+1}_+}t^{1-2s}|\na U_n|^2\dxdt=1$. Since $\{U_n\}_n$ is bounded in $\Dext$, there exists $U\in\Dext$ such that, up to a subsequence (still denoted by $\{U_n\}_n$), 
\begin{equation}\label{E-3}
U_n\rightharpoonup U\quad\text{weakly in }\Dext\quad\text{and}\quad 
  U_n\to U\quad\text{a.e. in }\R^{N+1}_+,
 \end{equation}
as $n\to \infty$. There holds
\[
  \begin{aligned}
   \mu(\bar{V})& \leq \int_{\R^{N}}t^{1-2s}|\na U_n|^2\dxdt-\kappa_s\int_{\Rn}V\abs{\Tr U_n}^2\dx+\kappa_s\int_{\Rn}\bar{W}\abs{\Tr U_n}^2\dx\\
&=\mu(V)+\kappa_s\int_{\Rn}\bar{W}\abs{\Tr U_n}^2\dx+o(1), \quad\text{as } n \to \infty.
  \end{aligned}
\]
Hence, from \eqref{E-3}, \eqref{E-1}, the choice of $\alpha$, and
Lemma \ref{lemma:compact_emb} we deduce that (if $\bar{\lambda}>0$)
\[
 1-\frac{1}{\ga_H}\bar{\la}-\frac{\al}{2} \leq \mu(V) +\kappa_s\int_{\Rn}\bar{W}\abs{\Tr U}^2\dx\leq 1-\frac{\bar{\lambda}}{\ga_H}-\al+\kappa_s\int_{\Rn}\bar{W}\abs{\Tr U}^2\dx,
\]
and so
$\kappa_s\int_{\Rn}\bar{W}\abs{\Tr U}^2\dx \geq \frac{\al}{2}>0$,
which implies that $U\nequiv 0$. The same conclusion easily follows in
the case $\bar{\lambda}=0$. From the weak convergence $U_n\deb U$ in
$\Dext$, the continuity of the trace map  $\Tr: \Dext \to L^{N/2s}(\R^N)$ and the definition of $\mu(V)$,
we have that
\begin{align*}
 \mu(V)&\leq \frac{\displaystyle\int_{\R^{N+1}_+}t^{1-2s}|\na U|^2\dxdt-\kappa_s\int_{\Rn}V \abs{\Tr U}^2\dx}{\displaystyle\int_{\R^{N+1}_+}t^{1-2s}|\na U|^2\dxdt}\\
 &=\frac{\mu(V)-\bigg[\displaystyle \int_{\R^{N+1}_+}t^{1-2s}|\na
   (U_n-U)|^2\dxdt-\kappa_s\int_{\Rn}V\abs{\Tr U_n-\Tr U}^2\dx
   \bigg]+o(1)}{\displaystyle\int_{\R^{N+1}_+}t^{1-2s}|\na
   U_n|^2\dxdt-\int_{\R^{N+1}_+}t^{1-2s}|\na (U_n-U)|^2\dxdt+o(1)}\\
&\leq \mu(V)\, \frac{1-\displaystyle\int_{\R^{N+1}_+}t^{1-2s}|\na (U_n-U)|^2\dxdt+o(1)}{1-\displaystyle\int_{\R^{N+1}_+}t^{1-2s}|\na (U_n-U)|^2\dxdt+o(1)} =\mu(V)+\frac{o(1)}{\displaystyle\int_{\R^{N+1}_+}t^{1-2s}|\na U|^2\dxdt+o(1)}  .
\end{align*}
Letting $n\to \infty$ yields the fact that $\mu(V)$ is attained by $U$ and this concludes the proof.
\end{proof}

 \appendix

\section{}

In this appendix, we recall some known results about properties of
solutions to 
equations on the extended, positive half-space.

We start by recalling a regularity result.

\begin{proposition}[\cite{FF} Proposition 3, \cite{Jin2014}
  Proposition 2.6]\label{prop:jlx}
Let $a,b\in L^p(B'_1)$, for some $p>\frac{N}{2s}$ and $c,d\in L^{q}(B_1^+;t^{1-2s})$, for some $q>\frac{N+2-2s}{2}$.
 Let $w\in H^1(B^+_1;t^{1-2s})$ be a weak solution of
\begin{equation*}
\begin{cases}
-\mathrm{div}(t^{1-2s}\nabla w)+ t^{1-2s}c(z)  w=  t^{1-2s}d(z),\quad &\mbox{in } B^+_1, \\
-\lim_{t\rightarrow 0^+}t^{1-2s}\frac{\partial w}{\partial t}= a(x)w+b(x),\quad  &\mbox{on
} B'_1.
\end{cases}
\end{equation*}
Then
$w\in C^{0,\beta}(\overline{B_{1/2}^+})$ and in addition
$$
\|w\|_{  C^{0,\beta}(\overline{B_{1/2}^+})}\leq
C\left( \|w\|_{ L^2(B^+_{1})}+\|b\|_{ L^p(B_1')}+\|d\|_{L^q(B_1^+;t^{1-2s})}  \right),
$$
with $C,\beta>0$ depending only on $N,s, \|a\|_{L^{p}(B_1')},\|c\|_{L^q(B_1^+;t^{1-2s})}$.
\end{proposition}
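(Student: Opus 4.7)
The strategy is to follow the De Giorgi--Nash--Moser program adapted to the degenerate setting, exploiting that the weight $t^{1-2s}$ belongs to the Muckenhoupt class $A_2(\R^{N+1})$ for every $s\in(0,1)$. This gives access to the weighted Sobolev and Poincar\'e inequalities of Fabes--Kenig--Serapioni on $H^1(B_r^+;t^{1-2s})$, together with the weighted trace embedding $H^1(B_r^+;t^{1-2s})\hookrightarrow L^{2^*_s}(B'_r)$. The exponent assumptions $p>N/(2s)$ and $q>(N+2-2s)/2$ are the natural Serrin-type thresholds that will provide just enough slack for Moser iteration to close.

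First, I would establish a local $L^\infty$ bound of the form
\[
\|w\|_{L^\infty(B_{3/4}^+)}\leq C\bigl(\|w\|_{L^2(B_1^+)}+\|b\|_{L^p(B'_1)}+\|d\|_{L^q(B_1^+;t^{1-2s})}\bigr)
\]
by Moser iteration, plugging into the equation suitable truncated-power test functions of the form $w_M^{2\gamma+1}\eta^2$, with $w_M$ a symmetric truncation of $w$ at height $M$ and $\eta$ a smooth cut-off between concentric half-balls. The interior terms involving $c$ and $d$ are controlled via H\"older's inequality with respect to the measure $t^{1-2s}\dxdt$, using the strict condition $q>(N+2-2s)/2$; the boundary integrals involving $a$ and $b$ are controlled by combining the weighted trace embedding with $p>N/(2s)$, which produces a factor small enough to be absorbed into the weighted gradient term (with a slight loss of cut-off at each step). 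Iterating the resulting reverse H\"older inequality over a dyadic sequence of half-balls yields the desired supremum bound.

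Second, once $w\in L^\infty_{\mathrm{loc}}(B_1^+)$, the zero-order contributions $a(x)w$ and $c(z)w$ are absorbed into inhomogeneities of the same type as $b$ and $d$ respectively, so the problem reduces to a purely inhomogeneous one. Interior H\"older regularity away from $\{t=0\}$ then follows directly from the Fabes--Kenig--Serapioni theory for $A_2$-weighted divergence-form equations. To extend the estimate up to the flat boundary, I would perform the even reflection $\widetilde w(x,t):=w(x,|t|)$ on a full ball $B_1\subset\R^{N+1}$: this produces a weak solution of a degenerate equation with the symmetric $A_2$-weight $|t|^{1-2s}$ and a distributional source on $\{t=0\}$ coming from the Neumann datum. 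A De Giorgi oscillation-decay argument adapted to this setting, together with the $L^\infty$ bound applied at each dyadic scale, then yields $\operatorname{osc}(w,B_r^+)\leq C r^\beta$ for $r\leq 1/2$, which is equivalent to $w\in C^{0,\beta}(\overline{B_{1/2}^+})$.

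The main obstacle will be the simultaneous treatment of interior and boundary data in the same Moser scheme: one must run the iteration so that $\|b\|_{L^p(B'_1)}$ and $\|d\|_{L^q(B_1^+;t^{1-2s})}$ appear linearly in the final estimate, while the norms $\|a\|_{L^p(B'_1)}$ and $\|c\|_{L^q(B_1^+;t^{1-2s})}$ enter only through the constants $C$ and $\beta$. A second delicate point is verifying that the even reflection $\widetilde w$ is indeed a weak solution on the full ball, which amounts to computing the jump of the conormal flux $-\lim_{t\to 0^\pm}|t|^{1-2s}\partial_t\widetilde w$ across $\{t=0\}$ and identifying it with the prescribed Neumann datum $a(x)w+b(x)$, so that the boundary source takes the expected form in the distributional formulation of the reflected problem.
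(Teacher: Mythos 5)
This proposition is not proved in the paper: it is recalled verbatim from \cite{FF} (Proposition 3) and \cite{Jin2014} (Proposition 2.6), so the paper's ``proof'' is a citation, and your outline reconstructs exactly the standard De Giorgi--Nash--Moser argument used in those references --- Moser iteration built on the Fabes--Kenig--Serapioni weighted Sobolev, Poincar\'e and trace inequalities for the $A_2$-weight $t^{1-2s}$, with the Serrin-type thresholds $p>N/(2s)$ and $q>(N+2-2s)/2$ providing the gain needed to absorb the lower-order and boundary terms, followed by even reflection across $\{t=0\}$ and an oscillation-decay scheme. The strategy is sound and matches the cited sources; the two delicate points you single out (running the iteration so that $a$ and $c$ enter only through the constants, and verifying the distributional form of the reflected equation with the Neumann datum as a source on $\{t=0\}$) are precisely the ones that require care in the references.
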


Now we recall, from \cite{Cabre2014}, an Hopf-type Lemma.

\begin{proposition}[\cite{Cabre2014} Proposition 4.11]\label{prop:cabre_sire}
	Let $\Phi\in   C^0(B_R^+\cup B_R')\cap H^1(B_R^+;t^{1-2s})$ satisfy
	\[
		\begin{cases}
			-\divergence (t^{1-2s}\nabla \Phi)\geq 0, &\text{in } B_R^+, \\
			\Phi>0, & \text{in }B_R^+, \\
			\Phi(0,0)=0 .
		\end{cases}
	\]
	Then
	\[
		-\limsup_{t\to 0^+}t^{1-2s}\frac{ \Phi(t,0)}{ t}<0.
	\]
	In addition, if 
\begin{equation}\label{eq:9}
t^{1-2s}\frac{\partial\Phi}{\partial t}\in C^0(B_R^+\cup B_R'),
\end{equation}
then
	\[
		-\left(t^{1-2s}\frac{\partial \Phi}{\partial t}\right)(0,0)<0.
	\]
\end{proposition}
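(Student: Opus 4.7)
I would attack the first conclusion via a local Hopf-type barrier argument adapted to the degenerate weighted operator $-\divergence(t^{1-2s}\nabla\cdot)$. The starting observation is that the pure power $t^{2s}$ is a classical solution of the homogeneous weighted equation in $\R^{N+1}_+$ — indeed $t^{1-2s}\nabla t^{2s}=2s\,e_t$ is divergence free — so any barrier capturing the sharp $t^{2s}$ growth at the touching point $(0,0)$ ought to be built around $t^{2s}$. Concretely, I would look for a function $\phi$ defined on a region $\Omega\subset B_R^+$ whose boundary contains $(0,0)$ — for instance an interior-tangent ball $B_r(z_0)$ with $z_0=(r,0)$, so that $\partial B_r(z_0)$ meets $\{t=0\}$ only at the origin — of the form
\[
\phi(z)=\delta\bigl(e^{-\alpha|z-z_0|^2}-e^{-\alpha r^2}\bigr),
\]
possibly modified by a multiplicative $t^{2s}$ factor, and tune the parameters $\alpha\gg 1$, $\delta\ll 1$ so that $-\divergence(t^{1-2s}\nabla\phi)\leq 0$ in $\Omega$ and $\phi\leq\Phi$ on $\partial\Omega$.

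Once such a $\phi$ is available, the comparison step is standard. Since $\Phi$ is a weighted supersolution and $\phi$ a weighted subsolution with $\phi\leq\Phi$ on $\partial\Omega$, the weak maximum principle for operators with $A_2$-Muckenhoupt weights (in the spirit of Fabes--Kenig--Serapioni) yields $\phi\leq\Phi$ inside $\Omega$. Restricting to the segment $x=0$ and using $\phi(t,0)\geq c\,t^{2s}$ for small $t>0$ gives $\Phi(t,0)\geq c\,t^{2s}$, whence
\[
-\limsup_{t\to 0^+}t^{1-2s}\,\frac{\Phi(t,0)}{t}
=-\limsup_{t\to 0^+}\frac{\Phi(t,0)}{t^{2s}}\leq -c<0,
\]
which is the first assertion.

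For the second assertion I would use the additional continuity hypothesis \eqref{eq:9} together with the lower bound just established. Setting $\ell:=(t^{1-2s}\partial_t\Phi)(0,0)$, continuity gives $(\tau^{1-2s}\partial_\tau\Phi)(\tau,0)\to\ell$ as $\tau\to 0^+$, so from $\Phi(0,0)=0$ and the fundamental theorem of calculus,
\[
\Phi(t,0)=\int_0^t \tau^{2s-1}\bigl(\tau^{1-2s}\partial_\tau\Phi(\tau,0)\bigr)\,\mathrm d\tau\sim\frac{\ell}{2s}\,t^{2s}\quad\text{as }t\to 0^+,
\]
where the integrability at $0$ follows from $2s-1>-1$. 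Comparing with $\Phi(t,0)\geq c\,t^{2s}$ forces $\ell\geq 2sc>0$, i.e.\ $-(t^{1-2s}\partial_t\Phi)(0,0)\leq -2sc<0$.

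The main obstacle is the barrier construction itself. The weighted operator degenerates on $\{t=0\}$, and a direct transplantation of the classical exponential Hopf barrier produces, upon computing $-\divergence(t^{1-2s}\nabla\phi)$, a lower-order term carrying the factor $(1-2s)(t-t_0)/t$ whose sign reverses at $s=1/2$ and whose magnitude blows up as $t\to 0^+$. One must therefore modify the ansatz — either by inserting a $t^{2s}$ prefactor or by restricting the reference region to a subdomain where this singular term remains bounded — while simultaneously ensuring three things: the weighted subsolution inequality, the boundary comparison $\phi\leq\Phi$ on the part of $\partial\Omega$ approaching the touching point (where $\Phi$ itself may vanish and where $\phi$ must therefore decay at least as fast), and genuine $t^{2s}$, rather than $o(t^{2s})$, growth of $\phi(t,0)$ along the axis. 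Balancing these three competing requirements, uniformly in $s\in(0,1)$, is the technical core of the proof.
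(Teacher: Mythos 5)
First, a point of reference: the paper does not prove this statement at all — it is quoted verbatim as Proposition 4.11 of \cite{Cabre2014} and used as a black box, so there is no internal proof to compare against. Judged on its own terms, your proposal identifies the correct framework (a Hopf-type comparison with a barrier modelled on the exact solution $t^{2s}$ of the weighted equation, combined with the weak comparison principle for $A_2$-weighted operators), and the second half — deducing $-(t^{1-2s}\partial_t\Phi)(0,0)<0$ from the lower bound $\Phi(t,0)\geq c\,t^{2s}$ via the fundamental theorem of calculus under hypothesis \eqref{eq:9} — is complete and correct.

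The first and essential conclusion, however, is not actually proved: the barrier is never constructed. Your own computation shows that the classical exponential Hopf barrier on an interior-tangent ball produces the uncontrolled term $\tfrac{1-2s}{t}\partial_t\phi$, and the two proposed repairs (a $t^{2s}$ prefactor, or shrinking the reference region) are left entirely unexamined; you yourself flag the balancing of the subsolution inequality, the boundary comparison, and the genuine $t^{2s}$ growth as ``the technical core of the proof,'' and that core is exactly what is missing. The gap is substantive, not cosmetic. The plain exponential barrier cannot work even in principle for $s<1/2$, since along the axis near the tangency point it grows only linearly in $t$ while the required lower bound is $t^{2s}\gg t$. The most natural separated repair also fails: for $w=t^{2s}g(x)$ on a half-cylinder one computes $-\divergence(t^{1-2s}\nabla w)=-t\,\Delta_x g$, so the subsolution inequality forces $g$ subharmonic, while the boundary comparison (recall $\Phi$ is only known to be nonnegative, and possibly arbitrarily small, on the lateral boundary near $\{t=0\}$) forces $g\le 0$ there with $g(0)>0$ — impossible by the maximum principle. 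A genuine correction term, as in the explicit subsolution built in the proof of \cite[Proposition 4.11]{Cabre2014}, is therefore unavoidable, and without it the proposal remains a plausible plan rather than a proof.
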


In several points of the present paper  we used the following result
from \cite{Cabre2014} to verify the validity of assumption
\eqref{eq:9} needed to apply Proposition \ref{prop:cabre_sire}.
\begin{lemma}[\cite{Cabre2014} Lemma 4.5]\label{l:cabre_sire}
  Let $s\in (0,1)$ and $R>0$. Let $\varphi\in C^{0,\sigma}(B_{2R}')$
  for some $\sigma\in(0,1)$ and $\Phi\in L^\infty(B_{2R}^+)\cap
  H^1(B_{2R}^+;t^{1-2s})$ be a weak solution to  
\[
\begin{cases}
-\divergence (t^{1-2s}\nabla \Phi)= 0, &\text{in } B_{2R}^+, \\
-\lim_{t\rightarrow 0^+}t^{1-2s}\frac{\partial \Phi}{\partial t}=
\varphi(x), &\text{on } B'_{2R}. 
\end{cases}
\]
Then there exists $\beta\in (0,1)$ depending only on $N,s,\sigma$ such
that 
\[
\Phi\in C^{0,\beta}(\overline{B_{R}^+})\quad\text{and}
\quad t^{1-2s}\frac{\partial \Phi}{\partial t}\in
C^{0,\beta}(\overline{B_{R}^+}).
\]
\end{lemma}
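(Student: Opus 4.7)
The plan is to establish the two claimed regularities separately. For the Hölder continuity of $\Phi$ itself on $\overline{B_R^+}$, I would invoke Proposition \ref{prop:jlx} with $a\equiv 0$, $b=\varphi$ and $c=d=0$: since $\varphi\in L^\infty(B'_{2R})\subset L^p(B'_{2R})$ for every $p>N/(2s)$, after a rescaling from $B_{2R}^+$ to the reference half-ball $B_1^+$ this yields $\Phi\in C^{0,\beta_0}(\overline{B_R^+})$ for some exponent $\beta_0=\beta_0(N,s)\in(0,1)$. Only the boundedness of $\varphi$ is used at this step.

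The heart of the statement is the boundary regularity of $V:=t^{1-2s}\partial_t\Phi$. The key observation is that $V$ solves a degenerate elliptic equation whose weight $t^{2s-1}$ is again of Muckenhoupt class $A_2$ (since $2s-1\in(-1,1)$). Indeed, a direct computation differentiating the interior equation $-\divergence(t^{1-2s}\nabla\Phi)=0$ in $t$ shows that $\partial_tV=-t^{1-2s}\Delta_x\Phi$, and hence $t^{2s-1}\partial_tV=-\Delta_x\Phi$, from which
\[
-\divergence\bigl(t^{2s-1}\nabla V\bigr)=0\qquad\text{weakly in }B_{2R}^+.
\]
Simultaneously, the Neumann-type boundary condition for $\Phi$ is transformed into a Dirichlet-type condition for $V$, namely $V(0,x)=-\varphi(x)$ for $x\in B'_{2R}$, with $\varphi\in C^{0,\sigma}(B'_{2R})$ by assumption.

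To conclude Hölder continuity of $V$ up to the flat portion of the boundary, I would reduce to a homogeneous Dirichlet problem by setting $\widetilde{\varphi}(t,x):=-\varphi(x)$ (independent of $t$) and $W:=V-\widetilde{\varphi}$. Then $W$ vanishes on $\{t=0\}$ and can be odd-reflected across this hyperplane to a function on the full ball $B_{2R}$ that weakly solves a degenerate elliptic equation with $A_2$ weight $|t|^{2s-1}$ and a distributional right-hand side coming from $\widetilde{\varphi}$. The Fabes--Kenig--Serapioni De Giorgi--Nash--Moser theory for equations with $A_2$ weights then yields Hölder continuity of $W$, and hence of $V$, with exponent $\beta=\beta(N,s,\sigma)$. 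As a byproduct, $t^{1-2s}\partial_t\Phi$ extends continuously to $\overline{B_R^+}$ with boundary value $-\varphi$, so the original Neumann condition for $\Phi$ is realized in the classical sense.

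The main technical obstacle is the rigorous handling of the last step, because $\varphi$ is only Hölder continuous and the source term inherited after reflection is merely distributional. A robust way to bypass this is to regularize $\varphi$ by mollification, apply the previous argument to obtain uniform $C^{0,\beta}$ estimates depending only on $N$, $s$, $\sigma$, and $\|\varphi\|_{C^{0,\sigma}(B'_{2R})}$, and then pass to the limit via Arzelà--Ascoli. Alternatively, one can appeal directly to up-to-boundary Hölder regularity results for the weighted Dirichlet problem with $A_2$ weight and Hölder boundary datum, which circumvents the reflection step entirely.
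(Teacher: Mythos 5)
This lemma is quoted verbatim from Cabr\'e--Sire (\cite{Cabre2014}, Lemma 4.5); the paper gives no proof of it, so there is nothing internal to compare your argument against and I can only assess it on its own terms. Your overall strategy --- weighted De Giorgi--Nash--Moser theory for the $A_2$ weight $t^{1-2s}$ to handle $\Phi$ itself, then the conjugate equation $-\divergence(t^{2s-1}\nabla V)=0$ for $V=t^{1-2s}\frac{\partial \Phi}{\partial t}$, which converts the Neumann datum into the Dirichlet datum $-\varphi$ --- is the standard and correct route, and the first half is indeed an immediate application of Proposition \ref{prop:jlx} with $a=c=d=0$ and $b=\varphi$.

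Two steps in the second half are genuine gaps rather than routine details. First, the identity $-\divergence(t^{2s-1}\nabla V)=0$ is only verified in the open half-ball, where $\Phi$ is smooth; before any boundary regularity theory can be applied you must show that $V$ belongs to $H^1(B_r^+;t^{2s-1})$ \emph{up to} $\{t=0\}$ and is a weak solution there attaining the boundary value $-\varphi$ in the trace sense. This requires an a priori estimate (difference quotients in $x$ together with the equation for the normal derivative) and is the real content of the duality step; it does not follow from the pointwise Neumann condition alone. Second, the mollification argument does not close as stated: after subtracting $\widetilde{\varphi}_\epsilon$ and odd-reflecting, the equation for $W_\epsilon$ carries the divergence-form source $\divergence\bigl(|t|^{2s-1}\nabla\widetilde{\varphi}_\epsilon\bigr)$, whose natural norms blow up like $\epsilon^{\sigma-1}$ as $\epsilon\to0$, so the H\"older estimates furnished by the weighted De Giorgi--Nash--Moser theory are not uniform in $\epsilon$ and Arzel\`a--Ascoli cannot be invoked. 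To obtain a bound depending only on $\|\varphi\|_{C^{0,\sigma}(B_{2R}')}$ one needs an oscillation-decay (Campanato-type) iteration in which the boundary datum is frozen at each boundary point --- that is, precisely the up-to-boundary H\"older theory for the weighted Dirichlet problem with H\"older datum that you mention as an ``alternative''. That alternative is in fact the necessary ingredient, not an optional shortcut.
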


Finally, we prove a density result: the idea behind is that removing a
point does not impair the definition of $\Dext$ and $\Ds$;  in other
words, a point in $\R^N$ has null fractional $s$-capacity if $N>2s$,
see also \cite[Example 2.5]{AFN}. 

\begin{lemma}\label{l:density}
 Let $z_0\in\overline{\R^{N+1}_+}$, $N>2s$. Then $C_c^\infty(\overline{\R^{N+1}_+}\setminus\{z_0\})$ is dense in $\Dext$. As a consequence, if $x_0\in \R^N$, then $C_c^\infty(\R^N\setminus\{x_0\})$ is dense in $\Ds$.
\end{lemma}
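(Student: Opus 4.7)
The plan is a standard capacity-type cutoff argument. Since $\Dext$ is defined as the completion of $C_c^\infty(\overline{\R^{N+1}_+})$, the triangle inequality reduces the first statement to showing that, for every $U \in C_c^\infty(\overline{\R^{N+1}_+})$, there exist $\phi_\eps \in C_c^\infty(\overline{\R^{N+1}_+} \setminus \{z_0\})$ such that $\phi_\eps \to U$ in $\Dext$ as $\eps \to 0^+$. I would take $\phi_\eps := \eta_\eps U$ where $\eta_\eps$ is a smooth radial cutoff of the form $\eta_\eps(z) = f_\eps(|z-z_0|)$, with $f_\eps$ a mollified version of the logarithmic bump: $f_\eps \equiv 0$ on $[0,\eps^2]$, $f_\eps \equiv 1$ on $[\eps,+\infty)$, and $|f_\eps'(r)| \leq C/(r\,|\log\eps|)$ on the transition annulus $[\eps^2,\eps]$. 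Since $\eta_\eps$ vanishes in a neighbourhood of $z_0$, the product $\eta_\eps U$ automatically lies in $C_c^\infty(\overline{\R^{N+1}_+} \setminus \{z_0\})$.

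Convergence would then follow by bounding
\[
\|\eta_\eps U - U\|_{\Dext}^2 \leq 2\int_{\R^{N+1}_+} t^{1-2s}(1-\eta_\eps)^2 |\nabla U|^2 \dxdt + 2 \int_{\R^{N+1}_+} t^{1-2s} U^2 |\nabla \eta_\eps|^2 \dxdt.
\]
The first summand vanishes as $\eps \to 0^+$ by dominated convergence: $(1-\eta_\eps)^2 \to 0$ pointwise a.e., is bounded by $1$, and $t^{1-2s}|\nabla U|^2$ is integrable. For the second summand, $|\nabla \eta_\eps|^2 \leq C/(|z-z_0|^2 (\log\eps)^2)$ on the annulus $\eps^2 \leq |z-z_0| \leq \eps$, and using $U \in L^\infty$ together with the weighted volume estimate in spherical coordinates centred at $z_0$, one finds, in the boundary case $z_0 = (0,x_0)$,
\[
\int_{\R^{N+1}_+} t^{1-2s} |\nabla \eta_\eps|^2 \dxdt \leq \frac{C}{(\log\eps)^2} \int_{\eps^2}^{\eps} \rho^{N-1-2s} \, d\rho \leq \frac{C \, \eps^{N-2s}}{(N-2s)(\log\eps)^2} \longrightarrow 0,
\]
where the hypothesis $N > 2s$ is used decisively. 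The case where $z_0$ is interior is analogous and easier, since $t^{1-2s}$ is locally bounded near $z_0$; only the logarithmic factor matters when the ambient dimension $N+1$ equals $2$.

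For the consequence on $\Ds$, given $u \in \Ds$ and $x_0 \in \R^N$, let $U \in \Dext$ be its Caffarelli-Silvestre extension (the minimiser in \eqref{eq:min_ext}). Applying the first part with $z_0 = (0,x_0)$ yields $\phi_n \in C_c^\infty(\overline{\R^{N+1}_+} \setminus \{(0,x_0)\})$ with $\phi_n \to U$ in $\Dext$; then $\Tr \phi_n = \phi_n(0,\cdot) \in C_c^\infty(\R^N \setminus \{x_0\})$, and by continuity of the trace map \eqref{eq:3}, $\Tr \phi_n \to u$ in $\Ds$.

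The main obstacle is the sharp weighted capacity estimate for the boundary point: both the logarithmic gradient decay of $\eta_\eps$ and the assumption $N > 2s$ are needed to ensure that $\int t^{1-2s} |\nabla \eta_\eps|^2 \dxdt$ vanishes as $\eps \to 0^+$. Geometrically, this reflects that in the weighted space $L^2(\R^{N+1}_+; t^{1-2s})$ the effective dimension for capacity purposes is $N+2-2s > 2$, so that a single point has zero $H^1$-capacity.
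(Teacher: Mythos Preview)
Your proof is correct and follows the same overall strategy as the paper: reduce to $U\in C_c^\infty(\overline{\R^{N+1}_+})$, multiply by a radial cutoff vanishing near $z_0$, control the cross term by dominated convergence and the gradient-of-cutoff term by a weighted volume estimate, then deduce the $\Ds$ statement via the extension and the continuity of the trace. The only difference is the choice of cutoff. The paper uses the simpler linear cutoff $\xi_n$ supported on the annulus $\{1/n\leq |z|\leq 2/n\}$ with $|\nabla\xi_n|\leq 2n$, giving directly
\[
n^2\int_{B_{2/n}^+\setminus B_{1/n}^+}t^{1-2s}\dxdt=O(n^{2s-N})\to 0,
\]
which is enough because $N>2s$ strictly. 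Your logarithmic cutoff $\eta_\eps$ is more refined and would also cover the critical case where the effective weighted dimension $N+2-2s$ equals $2$; since that case does not occur here, the extra $(\log\eps)^{-2}$ factor is harmless but unnecessary. Both arguments are standard and yield the same conclusion.
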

\begin{proof}
 Assume $z_0\in\partial\overline{\R^{N+1}_+}=\R^N$ (the proof is completely analogous if $z_0\in\R^{N+1}_+$). Moreover, without loss of generality, we can assume $z_0=0$. Let $U\in C_c^\infty(\overline{\R^{N+1}_+})$ and let $\xi_n\in C^\infty(\overline{\R^{N+1}_+})$ be a cut-off function such that
 \begin{gather*}
  \xi_n(z)=\begin{cases}
   1, &\text{if }z\in \overline{\R^{N+1}_+\setminus B_{2/n}^+}, \\
   0, &\text{if }z\in \overline{B_{1/n}^+},
  \end{cases} \\
 \xi_n\quad\text{is radial, i.e. }\xi_n(z)=\xi_n(\abs{z}),\qquad \abs{\xi_n}\leq 1,\quad \abs{\nabla \xi_n}\leq 2n.
 \end{gather*}
Trivially $\xi_n U\in C_c^\infty(\overline{\R^{N+1}_+}\setminus\{0\})$. We claim that $\xi_n U\to U$ in $\Dext$. Indeed, thanks to Dominated Convergence Theorem,
\begin{multline*}
	\int_{\R^{N+1}_+}t^{1-2s}\abs{\nabla ((\xi_n-1) U)}^2\dxdt \\
	\leq 2\int_{\R^{N+1}_+}t^{1-2s}\abs{\xi_n-1}^2\abs{\nabla U}^2\dxdt+2\int_{\R^{N+1}_+}t^{1-2s}\abs{U}^2\abs{\nabla \xi_n}^2\dxdt \\
	\leq o(1)+Cn^2\int_{B^+_{2/n}\setminus B^+_{1/n}}t^{1-2s}\dxdt.
\end{multline*}
Moreover
\[
	n^2\int_{B^+_{2/n}\setminus B^+_{1/n}}t^{1-2s}\dxdt=O(n^{2s-N}),
\]
which concludes the proof of the claim, in view of the assumption
$N>2s$ and 
the density of $C_c^\infty(\overline{\R^{N+1}_+})$ in $\Dext$.

For what concerns the second statement, as before, without loss of
generality, we can assume $x_0=0$. Let $u\in \Ds$ and let $U\in\Dext$
be its extension. By the density of
$C_c^\infty(\overline{\R^{N+1}_+}\setminus\{0\})$  in $\Dext$ just
proved, there exists a sequence $\{U_n\}\subset
C_c^\infty(\overline{\R^{N+1}_+}\setminus\{0\})$ such that $U_n\to U$
in $\Dext$. Then $\Tr (U_n)\in C^\infty_c(\R^N\setminus\{0\})$ and
$\Tr (U_n)\to \Tr U=u$ in $\Ds$, thanks to the continuity of the trace map
 $\Tr\colon \Dext \to \Ds$.
\end{proof}

\bigbreak

\bigskip\noindent {\bf Acknowledgments.}   V. Felli is partially
supported by the PRIN2015 grant ``Variational methods, with
applications to problems in mathematical physics and geometry''.
D. Mukherjee's research is supported by the Czech Science Foundation, project GJ19--14413Y.
V. Felli and R. Ognibene are partially supported by the INDAM-GNAMPA 2018 grant ``Formula di
monotonia e applicazioni:
 problemi frazionari e stabilità spettrale rispetto a perturbazioni
 del dominio''.
This work was started while D. Mukherjee  was visiting  the University
of Milano - Bicocca supported
by INDAM-GNAMPA.

\bibliography{biblio}
\bibliographystyle{acm}

\end{document}